\newcommand{\bbc}{{\mathbb C}}
\newcommand{\bbq}{{\mathbb Q}}
\newcommand{\bbr}{{\mathbb R}}
\newcommand{\bbz}{{\mathbb Z}}
\newcommand{\be}{{\beta}}
\newcommand{\lam}{{\lambda}}
\newcommand{\Lam}{{\Lambda}}
\newcommand{\sig}{{\sigma}}
\newcommand{\gB}{{\mathfrak B}}
\newcommand{\gt}{{\mathfrak t}}
\newcommand{\ch}{{\operatorname{ch}}\,}
\newcommand{\aff}{{\operatorname {Aff}}}
\newcommand{\m}{{\operatorname {M}}}
\newcommand{\gl}{{\operatorname{GL}}}
\newcommand{\spl}{{\operatorname{SL}}}
\newcommand{\sst}{{\operatorname{ss}}}
\newcommand{\rep}{representation}
\newcommand{\pv}{prehomogeneous vector space}
\newcommand{\Z}{\bbz}
\newcommand{\Q}{\bbq}
\newcommand{\R}{\bbr}
\newcommand{\C}{\bbc}
\newcommand{\mk}{k^{\times}}
\newcommand{\kableadd}%
{Department of Mathematics\\ Cornell University\\
Ithaca NY 14853}
\newcommand{\sub}{\subset}
\newcommand{\ccd}{,\ldots,}
\newcommand{\lan}{\langle}
\newcommand{\ran}{\rangle}
\def\varddots{\mathinner{
\mkern1mu%
 \raise\p@\hbox{.}\mkern2mu%
 \raise4\p@\hbox{.}\mkern2mu%
 \raise7\p@\vbox{\kern7\p@\hbox{.}}%
\mkern1mu}}
\theoremstyle{plain}
\newtheorem{thm}{Theorem}[section]
\newtheorem{lem}[thm]{Lemma}
\newtheorem{prop}[thm]{Proposition}
\theoremstyle{definition}
\theoremstyle{remark}
\newcommand{\coorde}{{\mathbbm e}}
\newcommand{\coordf}{{\mathbbm f}}
\newcommand{\diag}{{\mathrm{diag}}}
\newcommand{\size}{\tiny}
\newcommand{\Ex}{\mathrm{Ex}}
\begin{document}

\address[K. Tajima]
{National Institute of Technology, Sendai College, Natori Campus, 
48 Nodayama, Medeshima-Shiote, Natori-shi, Miyagi, 981-1239, Japan}
\email{kndt2147@yahoo.co.jp}

\address[A. Yukie]
{Department of Mathematics, Graduate School of Science, 
Kyoto University, Kyoto 606-8502, Japan}
\email{yukie@math.kyoto-u.ac.jp}
\thanks{The second author was partially supported by 
Grant-in-Aid (C) (17K05169)\\}

\keywords{prehomogeneous, vector spaces, stratification, GIT}
\subjclass[2010]{11S90, 11R45}

\title{On the GIT stratification of prehomogeneous vector spaces II}
\author{Kazuaki Tajima}
\author{Akihiko Yukie}
\maketitle

\begin{abstract}
We determine all orbits of two \pv s rationally over an arbitrary 
perfect field in this paper.  
\end{abstract}

\section{Introduction}
\label{sec:introduction}

This is part two of a series of four papers. 
In Part I, we determined the set $\gB$ of vectors 
which parametrizes the GIT stratification \cite{tajima-yukie} 
of the four \pv s (1)--(4) in \cite{tajima-yukie-GIT1}. 
Even though the set $\gB$ was determined, the corresponding stratum 
$S_{\be}$ may be the empty set. 

In this part, we consider the following two \pv s:

(1) $G=\gl_3\times \gl_3\times \gl_2$, 
$V=\aff^3\otimes \aff^3\otimes \aff^2$,

(2) $G=\gl_6\times \gl_2$, 
$V=\wedge^2 \aff^6\otimes \aff^2$.

For a general introduction to this series of papers, 
see the introduction of \cite{tajima-yukie-GIT1}. 

Throughout this paper, $k$ is a fixed perfect field. 
Let $\Ex_2(k)$ be the set of isomorphism classes of 
separable extensions of $k$ of degree up to two. 
The following theorems are our main theorems in this part.

\begin{thm}
\label{thm:main1}
For the \pv{} (1), there are $16$ non-empty strata $S_{\be}$. 
Moreover, except for one stratum $S_{\be_0}$, the set 
$G_k\backslash S_{\be\,k}$
consists of a single point whereas the set $G_k\backslash S_{\be_0\,k}$ 
is in bijective correspondence with $\Ex_2(k)$. 
\end{thm}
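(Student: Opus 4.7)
My plan is to apply the Kempf--Rousseau--Hesselink parabolic reduction stratum by stratum. For each $\be\in\gB$ determined in Part~I, the indexing one-parameter subgroup $\lam_\be$ of $G$ specifies a parabolic $P_\be\sub G$ with Levi $M_\be$, together with a weight subspace $Z_\be\sub V$; write $Z_\be^{\sst}\sub Z_\be$ for the locus of points whose Kempf--Rousseau optimal destabilizer is conjugate to $\lam_\be$ itself. The standard parabolic-descent argument gives a bijection
$$
G_k\bk S_{\be,k}\;\llrarrow\;M_{\be,k}\bk Z_{\be,k}^{\sst},
$$
and the theorem then splits into three tasks: (i)~exhibit a $k$-rational point of $Z_\be^{\sst}$ for each of the $16$ indices claimed to give non-empty strata; (ii)~verify $Z_{\be,k}^{\sst}=\emptyset$ for the remaining $\be\in\gB$; and (iii)~classify $M_{\be,k}$-orbits on $Z_{\be,k}^{\sst}$ in each non-empty case.

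Steps (i) and (ii) are largely mechanical. For each $\be$ listed in Part~I one writes $Z_\be$ as the explicit sum of monomial weight spaces inside $\aff^3\otimes\aff^3\otimes\aff^2$, reads off the $M_\be$-action, and then either exhibits a visible $k$-representative or shows that the open condition defining $Z_\be^{\sst}$ is incompatible with the geometry of $Z_\be$. For step (iii), most of the Levi prehomogeneous vector spaces $(M_\be,Z_\be^{\sst})$ arising here are of very small rank (a torus or a single $\gl_2$ acting on a short list of tensors), and single-orbit transitivity over $k$ follows either by explicit column reduction or from Hilbert~$90$ applied to a connected generic stabilizer; this should handle the $15$ ``easy'' non-empty strata uniformly.

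The genuine work lies in the exceptional stratum $S_{\be_0}$. I expect $(M_{\be_0},Z_{\be_0}^{\sst})$ to reduce, after modding out factors that act only by scalars, to (a twist of) the standard $\gl_2$ action on binary quadratic forms $\sym^2\aff^2$, whose $k$-orbits on the non-degenerate locus are classically in bijection with $\Ex_2(k)$ via $q\mapsto k[T]/(q(T,1))$. The plan is to single out the relevant $\gl_2$ factor of $M_{\be_0}$, express the action on $Z_{\be_0}^{\sst}$ as such a binary quadratic form representation, and transport the classical classification back up to $S_{\be_0}$. The main obstacle, and the step requiring real care, is ruling out any additional rational obstruction from the other Levi factors: concretely one must compute the generic stabilizer $H_{\be_0}$, show that its non-abelian $H^1(k,-)$ parametrizes precisely $\Ex_2(k)$, and check that the resulting map $G_k\bk S_{\be_0,k}\to\Ex_2(k)$ is both well defined and surjective by producing an explicit $k$-rational representative of $S_{\be_0}$ associated with each quadratic étale algebra $L/k$. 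The perfectness of $k$ is what makes $\Ex_2(k)$, rather than a larger collection of rank-two algebras, the right target.
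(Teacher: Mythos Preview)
Your overall strategy matches the paper's: reduce $G_k\bk S_{\be,k}$ to an orbit problem for the Levi and handle the $\be$'s one at a time. Two points, however, are genuine gaps.

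First, the bijection $G_k\bk S_{\be,k}\cong M_{\be,k}\bk Z_{\be,k}^{\sst}$ is not a single ``standard parabolic-descent argument.'' What comes for free is $G_k\bk S_{\be,k}\cong P_{\be,k}\bk Y_{\be,k}^{\sst}$, where $Y_\be=Z_\be\oplus W_\be$ carries an additional higher-weight piece $W_\be$. To pass from $P_\be$ to $M_\be$ one must show that the unipotent radical $U_\be$ sweeps out the $W_\be$-fibre over each point of $Z_{\be,k}^{\sst}$, and this is \emph{not} automatic: in the paper it is verified by explicit coordinate computations in each non-empty case with $W_\be\neq\{0\}$ (see Propositions~\ref{prop:orbit-beta6}, \ref{prop:orbit-beta11}, \ref{prop:orbit-beta22}, \ref{prop:orbit-beta28}, \ref{prop:orbit-beta39}, \ref{prop:orbit-beta40}, \ref{prop:orbit-beta42}). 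Your plan omits this step entirely.

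Second, your guess about the exceptional stratum is wrong. For $\be_0=\be_{29}$ one finds $M_{\be_{29}}\cong\gl_2^3\times\gl_1^2$ acting on $Z_{\be_{29}}\cong\aff^2\otimes\aff^2\otimes\aff^2$; this is the ``$D_4$-case'' of \cite{wryu}, not a single $\gl_2$ acting on $\sym^2\aff^2$. The relative invariant is the degree-$4$ hyperdeterminant, not a binary discriminant, and the identification of $M_{\be_{29},k}\bk Z_{\be_{29},k}^{\sst}$ with $\Ex_2(k)$ comes from the known arithmetic of that \pv{} (its regularity in the sense of \cite{kato-yukie-jordan} makes the Galois-cohomological argument go through in all characteristics). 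Your proposed reduction to binary quadratic forms and the attendant stabilizer computation would therefore not go through as written; you would arrive at the correct answer $\Ex_2(k)$ only by accident, since both \pv{}s happen to parametrize quadratic \'etale algebras.
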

\begin{thm}
\label{thm:main2}
For the \pv{} (2), there are $13$ non-empty strata $S_{\be}$. 
Moreover, except for one stratum $S_{\be_0}$, the set 
$G_k\backslash S_{\be\,k}$
consists of a single point whereas the set $G_k\backslash S_{\be_0\,k}$ 
is in bijective correspondence with $\Ex_2(k)$. 
\end{thm}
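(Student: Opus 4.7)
The plan is to work through each element of the set $\gB$ determined in Part I for the \pv{} (2). Recall the general framework of the GIT stratification used there: for each $\be \in \gB$ there is an associated parabolic subgroup $P_\be \subseteq G$ with Levi decomposition $P_\be = M_\be U_\be$ and a subspace $Z_\be \subseteq V$ stable under $M_\be$, such that the stratum $S_\be$ is the $G$-saturation of a set $Y_\be^{\sst}$ of points in $Z_\be$ that are semistable for an appropriate sub-action of $M_\be$. The stratum $S_\be$ is non-empty over $k$ exactly when $Y_{\be,k}^{\sst} \neq \emptyset$, and the rational orbits $G_k\backslash S_{\be,k}$ are in bijection with $M_{\be,k}$-orbits on $Y_{\be,k}^{\sst}$.

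First I would enumerate the elements of $\gB$ from Part I and, for each one, compute $(M_\be, Z_\be, Y_\be^{\sst})$ explicitly. This is concrete: $M_\be$ is a block Levi subgroup of $\gl_6\times \gl_2$ determined by the shape of $\be$, and $Z_\be$ is the $k$-linear subspace of $V = \wedge^2 k^6 \otimes k^2$ cut out by the weight conditions defining $\be$. For each $\be$, I would test whether there exists a $k$-rational semistable point; those $\be$ for which this fails contribute empty strata and are discarded, leaving the $13$ non-empty strata claimed.

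For each of the $12$ non-exceptional non-empty $\be$, I would exhibit a ``standard'' semistable representative $v_\be \in Y_{\be,k}^{\sst}$ and show that $M_{\be,k}$ acts transitively on $Y_{\be,k}^{\sst}$. In most cases the restricted action of $M_\be$ on $Z_\be$ is itself a well-understood \pv{} (or reduces to one after quotienting by a normal subgroup acting trivially on the semistable locus). Its rational orbits are then classified by a Galois cohomology set $H^1(k, H_\be)$ with $H_\be$ the generic stabilizer, and since $k$ is perfect one checks case by case that $H_\be$ is either trivial or has trivial $H^1$ (a split torus, a connected unipotent group, etc.), giving the single orbit.

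The main obstacle, and the content of the exceptional case, is the stratum $S_{\be_0}$, where the action of $M_{\be_0}$ on $Z_{\be_0}$ should be equivalent, modulo central factors acting by scalars, to the standard action of $\gl_2$ (or a small extension of it) on binary quadratic forms, whose $k$-rational orbits on the discriminant-nonzero locus are classically in bijection with $\Ex_2(k)$ via $q(x,y) \mapsto k[x]/(q(x,1))$. The plan is to write down an $M_{\be_0}$-equivariant isomorphism between $(M_{\be_0}, Z_{\be_0}, Y_{\be_0}^{\sst})$ and this binary quadratic setup, verify that the semistability conditions on the two sides match, and then invoke the classical bijection. Carrying out this reduction cleanly, and isolating which central subgroups act trivially versus by scalars (so that the $M_{\be_0,k}$-orbits and the $\gl_2(k)$- or $\pgl_2(k)$-orbits agree), is the one step that goes beyond the routine case analysis used for the other twelve strata.
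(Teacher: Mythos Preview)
Your outline matches the paper's overall strategy, but there are two genuine gaps and one concrete error.

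First, the empty-strata step is not a proof as written. ``Test whether there exists a $k$-rational semistable point'' hides all the work: for each of the $68$ values of $\be$ not among the $13$, the paper exhibits an explicit one-parameter subgroup of $G_{\text{st},\be}$ that destabilizes every point of $Z_\be$ (after first using $M^s_\be$ to zero out some coordinates). This is the Hilbert--Mumford direction that actually requires case-by-case construction; you have not indicated any mechanism for producing these $1$-PS's. Second, you conflate $M_{\be,k}$-orbits on $Z_{\be,k}^{\sst}$ with $P_{\be,k}$-orbits on $Y_{\be,k}^{\sst}$. In the paper's setup $Y_\be = Z_\be \oplus W_\be$ with $W_\be\neq 0$ in many cases, and one must show that the unipotent radical $U_{\be,k}$ acts transitively on the $W_\be$-fiber over a representative in $Z_{\be,k}^{\sst}$; this is done by explicit unipotent calculations in each case and is not automatic.

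The concrete error is in the exceptional stratum. The relevant $(M_{\be_0},Z_{\be_0})$ is $\gl_4\times\gl_2^2$ acting on $\wedge^2\aff^4\otimes\aff^2$ (a $12$-dimensional space, case $\be_{67}$ in the paper), not $\gl_2$ on binary quadratic forms. The orbit set is indeed $\Ex_2(k)$, but this comes from the known classification of that larger prehomogeneous vector space (Wright--Yukie), not from a reduction to binary quadratics; your proposed equivariant isomorphism does not exist for dimension reasons. Separately, your Galois-cohomology plan for the $12$ single-orbit cases is a legitimate alternative to the paper's direct transitivity arguments, but you would need to compute each stabilizer and verify $H^1$ vanishes, which is comparable work; the paper instead proves transitivity over $k$ by hand, avoiding any $H^1$ computation.
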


We need preparations to make the above statements more precise. 
We shall state more precise theorems after some 
preparations in later sections. 
For the case (1) (resp. (2)), see Theorem \ref{thm:main1-detail} 
(resp. Theorem \ref{thm:main2-detail}). 

As we pointed out in Part I, the orbit decomposition of 
these \pv s is known over $\C$ (see \cite[pp.385--387]{kimu}, 
Proposition 2.2 \cite[pp.456,457]{kimura-orbits}). 
Our approach answers to rationality questions and provide 
the inductive structure of orbits rationally over $k$. 


\section{Notation}
\label{sec:notation}

We discuss notations used in this part.   

We denote the characteristic of $k$ by $\ch(k)$. 
We often have to refer to a set consisting of a single point. 
We write SP for such a set. If $V$ is a vector space then 
we denote the dual space of $V$ by $V^*$. 

Let $\gl_n$ (resp. $\spl_n$) be the general linear group 
(resp. special linear group) of matrices of size $n$, 
$\m_{n,m}$ the space of $n\times m$ matrices and $\m_n=\m_{n,n}$. 
We express the set of $k$-rational points by 
$\gl_n(k)$, etc. We sometimes use 
the notation $[x_1\ccd x_n]$ to express column vectors 
to save space.  We denote the unit matrix of dimension $n$ 
by $I_n$.  We use the notation $\diag(g_1\ccd g_m)$ 
for the block diagonal matrix whose diagonal blocks are 
$g_1\ccd g_m$.  For $u=(u_{ij})\in \aff^{n(n-1)/2}$ ($1\leq j<i\leq n$), 
let $n_n(u)$ be the lower triangular matrix whose diagonal entries are $1$ 
and the $(i,j)$-entry is $u_{ij}$ for $i>j$. 
If $v_1\ccd v_m$ are elements of a vector space $V$ then 
$\lan v_1\ccd v_m\ran$ is the subspace spanned by 
$v_1\ccd v_m$. 

For the rest of this paper, 
tensor products are always over $k$.

If $n=2$, we define a map $\theta:\m_2\to \m_2$ by 
\begin{equation}
\label{eq:theta-defn}
\m_2\ni 
A = \begin{pmatrix}
a & b \\
c & d 
\end{pmatrix}
\mapsto \theta(A) = 
\begin{pmatrix}
d & -c \\
-b & a 
\end{pmatrix} 
\in \m_2. 
\end{equation}
Since $\theta(g)= (\det g){}^t g^{-1}$ for $g\in\gl_2$, 
$\theta$ induces an automorphism of $\gl_2$.  
Since $\gl_2\sub\m_2$ is Zariski dense, 
$\theta(AB)=\theta(A)\theta(B)$ for $A,B\in\m_2$. 
Since $\theta$ is clearly additive and bijective, 
it is an automorphism of the $k$-algebra $\m_2$. 

For $x=[x_1,x_2],y=[y_1,y_2]\in\aff^2$ (these are column vectors), 
we define 
\begin{equation}
\label{eq:invariant-bilinear-aff2}
[x,y]_{\aff^2} = {}^tx y = x_1y_1+x_2y_2. 
\end{equation}
Then it is easy to see that 
\begin{equation}
\label{eq:bilinear-gl2-property}
[\theta(g)x,gy]_{\aff^2} =[gx,\theta(g)y]_{\aff^2} = (\det g)[x,y]_{\aff^2}
\end{equation}
for $g\in\gl_2$.  By the above pairing we can identify $\aff^2$ with 
its dual space. Also the standard basis $\{\coorde_1,\coorde_2\}$ 
can be identified with the dual basis.  The relation 
(\ref{eq:bilinear-gl2-property}) shows that $\det g$ times the 
contragredient \rep{} can be identified with the action of $\theta(g)$. 
Sometimes the involution   
\begin{math}
g\mapsto 
\left(
\begin{smallmatrix}
0 & 1 \\ 
1 & 0 
\end{smallmatrix}
\right) {}^t g^{-1}
\left(
\begin{smallmatrix}
0 & 1 \\ 
1 & 0 
\end{smallmatrix}
\right)
\end{math}
is used so that lower triangular matrices map to 
lower triangular matrices, but we do not need that 
property in this part. 

If $\chi_1,\chi_2$ are characters of an algebraic group, 
they are said to be {\it proportional} if there exist 
positive integers $a,b$ such that $\chi_1^a=\chi_2^b$.  
If $\rho:G\to \gl(V)$ is a \rep{} of an algebraic group 
($\dim V<\infty$) then 
$\rho^*:G\to \gl(V^*)$ is the contragredient \rep{} of $\rho$
($V^*$ is the dual space).  Note that if $V_1,V_2$ 
are (finite dimensional) \rep s of $G$ then 
$(V_1\otimes V_2)^*\cong V_1^*\otimes V_2^*$.  
By (\ref{eq:bilinear-gl2-property}), the standard \rep{} 
of $\spl_2$ is equivalent to its contragredient \rep.

Suppose that $G$ is in the form 
$\gl_{n_1}\times \cdots \times \gl_{n_a}$. 
For the \pv s (1), (2), $a=3,2$.  
We use parabolic subgroups which consist 
of lower triangular  blocks. 
Let $i=1\ccd a$ and $j_0=0<j_1<\cdots<j_{N_i}=n_i$. 
We use the notation $P_{i,[j_{i1}\ccd j_{i\,N_i-1}]}$ 
(resp. $M_{i,[j_{i1}\ccd j_{i\,N_i-1}]}$)
for the parabolic subgroup (resp. reductive subgroup) of $\gl_{n_i}$ 
in the form 
\begin{equation*}
\begin{pmatrix} 
P_{11} & 0 \cdots 0 & 0 \\
\vdots & \vdots & 
\begin{matrix}
0 \\ \vdots \\ 0
\end{matrix}  \\ 
P_{N_i\,1} & \cdots & P_{N_i\,N_i} 
\end{pmatrix}, \quad
\begin{pmatrix} 
M_{11} & 0 \cdots 0 & 0 \\
\begin{matrix}
0 \\ \vdots \\ 0
\end{matrix}
& \ddots & 
\begin{matrix}
0 \\ \vdots \\ 0
\end{matrix}  \\ 
0 & 0 \cdots 0 & M_{N_i\,N_i} 
\end{pmatrix}
\end{equation*}
where the size of $P_{kl},M_{kl}$ is 
$(j_{ik}-j_{i\,k-1})\times (j_{il}-j_{i\,l-1})$. 
If $N_i=1$ then we use the notation $P_{i,\emptyset},M_{i,\emptyset}$.

We put 
\begin{equation}
\label{eq:parabolic-defn}
\begin{aligned} 
P_{[j_{11}\ccd j_{1\,N_1-1}]\ccd [j_{a1}\ccd j_{a\,N_a-1}]}
& = P_{1,[j_{11}\ccd j_{1\,N_1-1}]}
\times \cdots \times P_{a,[j_{a1}\ccd j_{a\,N_a-1}]}, \\
M_{[j_{11}\ccd j_{1\,N_1-1}]\ccd [j_{a1}\ccd j_{a\,N_a-1}]}
& = M_{1,[j_{11}\ccd j_{1\,N_1-1}]}
\times \cdots \times M_{a,[j_{a1}\ccd j_{a\,N_a-1}]}.
\end{aligned}
\end{equation}
If $N_i=1$ then we replace $[j_{i1}\ccd j_{i\,N_i-1}]$ by 
$\emptyset$. 
Let 
\begin{equation}
\label{eq:M1-defn}
\begin{aligned}
& M^1_{[j_{11}\ccd j_{1\,N_1-1}]\ccd [j_{a1}\ccd j_{a\,N_a-1}]} \\
& = (\spl_{n_1}\times \cdots \times \spl_{n_a})\cap
M_{1,[j_{11}\ccd j_{1\,N_1-1}]}
\times \cdots \times M_{a,[j_{a1}\ccd j_{a\,N_a-1}]}
\end{aligned}
\end{equation}
and $M^s_{[j_{11}\ccd j_{1\,N_1-1}]\ccd [j_{a1}\ccd j_{a\,N_a-1}]}$ 
be the semi-simple part of 
$M_{[j_{11}\ccd j_{1\,N_1-1}]\ccd [j_{a1}\ccd j_{a\,N_a-1}]}$.

We consider many \rep s of groups of the 
form $M_{[j_{11}\ccd j_{1\,N_1-1}]\ccd [j_{a1}\ccd j_{a\,N_a-1}]}$
in later sections. 
We use notations such as 
\begin{equation}
\label{eq:Lam-defn}
\Lam^{m,i}_{j,[c,d]}.
\end{equation}
The meaning of this notation is that this is 
$\wedge^i \aff^m$ as a vector space where 
$\aff^m$ is the standard \rep{} of $\gl_m$ 
and the indices $j,[c,d]$ mean that 
the block from the $(c,c)$-entry to 
the $(d,d)$-entry of the $j$-th factor $\gl_{n_j}$ 
of $M_{[j_{11}\ccd j_{1\,N_1-1}]\ccd [j_{a1}\ccd j_{a\,N_a-1}]}$.
is acting on this vector space. 
For example, if $a=2$, $n_1=3,n_2=3$, $N_1=1,N_2=2$ then 
$M_{[1],[2]}$ consists of elements of the form 
$(\diag(t_1,g_1),\diag(g_2,t_2))$ where $t_1,t_2\in\gl_1,g_1,g_2\in\gl_2$. 
Then $\Lam^{2,1}_{2,[1,2]}$ is the standard \rep{} of $g_2\in \gl_2$
identified with the element $(I_3,\diag(g_2,1))$.   
The trivial \rep{} of $M_{[j_{11}\ccd j_{1\,N_1-1}]\ccd [j_{a1}\ccd j_{a\,N_a-1}]}$
is denoted by $1$. 

Since $V$ of \pv s (1), (2) has a ``scalar direction'', 
it is natural to remove scalar directions from $G$ to measure
stability in the sense of geometric invariant theory. 
In \cite{tajima-yukie}, we considered a certain subgroup 
$G_1\sub G$ for that purpose. However, since our group here
is in the form $G=G_1\times G_2\times G_3$ or 
$G=G_1\times G_2$, we use the notation $G_{\text{st}}$ 
(``st'' stands for ``stability'') for the group $G_1$ in 
\cite{tajima-yukie}. 

For the \pv{} (1) (resp. (2)), we choose 
\begin{equation*}
T_0=\{(t_1I_3,t_2I_3,t_3I_2)\mid t_1,t_2,t_3\in\gl_1\} 
\; (\text{resp.} \; T_0=\{(t_1I_6,t_2I_2)\mid t_1,t_2\in\gl_1\})
\end{equation*}
(the center of $G$) and $G_{\text{st}}=\spl_3\times \spl_3\times \spl_2$
(resp. $G_{\text{st}}=\spl_6\times \spl_2$). 

We use other notations such as 
\begin{equation*}
\gt^*,\gt^*_+,\gt^*_{\Q},(\;,\;)_*,M_{\be}, 
P_{\be},U_{\be},Z_{\be},W_{\be},Y_{\be},S_{\be},\lam_{\be},\chi_{\be}
\end{equation*}
of \cite{tajima-yukie} and Section 2 of \cite{tajima-yukie-GIT1}. 
There is a slight ambiguity on the domain of definition of $\chi_{\be}$
in \cite{tajima-yukie}. In this paper, $\chi_{\be}$ is an indivisible character 
on $M_{\be}$, proportion to $\be$. If $M_{\be}$ is in the form 
(\ref{eq:parabolic-defn}), $M^1_{\be}$ is defined to be the 
group (\ref{eq:M1-defn}). Note that $M^1_{\beta}=M_{\be}\cap G_{\text{st}}$. 
Let $M^s_{\be}$ be the semi-simple part of $M_{\be}$. 
Let $G_{\text{st},\beta}=\{g\in M^1_{\beta}\mid \chi_{\be}(g)=1\}^{\circ}$
(the identity component). This $G_{\text{st},\beta}$ is $G^1_{\be}$ of 
\cite{tajima-yukie}. 

The space $\gt^*$ is 
\begin{align*}
& \left\{(a_{11},a_{12},a_{13},a_{21},a_{22},a_{23},a_{31},a_{32})\in\R^8\,
\,\vrule\, \, \sum_{i=1}^3 a_{1i}=
\sum_{i=1}^3 a_{2i}=\sum_{i=1}^2 a_{3i}=0\right\}, \\[5pt]
& \left\{(a_{11},a_{12},a_{13},a_{14},a_{15},a_{16},a_{21},a_{22})\in\R^8\,
\,\vrule\, \, \sum_{i=1}^6 a_{1i}=\sum_{i=1}^2 a_{2i}=0\right\}
\end{align*}
for the cases (1), (2) respectively.

%

Let $\coorde_i$ be the coordinate vector of 
$\aff^3$ with respect to the $i$-th coordinate
and $\coordf_i$ the coordinate vector of 
$\aff^2$ with respect to the $i$-th coordinate.  
When we have to distinguish the first two factors, 
we may write $\coorde_{1,i},\coorde_{2,i}$. 
We put $e_{i_1i_2i_3}=\coorde_{i_1}\otimes \coorde_{i_2}\otimes \coordf_{i_3}$
for $i_1,i_2=1,2,3,i_3=1,2$. The numbering used in \cite{tajima-yukie-GIT1} 
for (1) is as follows. 

\vskip 10pt

\tiny

\begin{center}
 
\begin{tabular}{|c|c|c|c|c|c|c|c|c|}
\hline
1 & 2 & 3 & 4 & 5 & 6 & 7 & 8 & 9 \\
\hline
$e_{111}$ & $e_{121}$ & $e_{131}$ & $e_{211}$ & $e_{221}$ & $e_{231}$ 
& $e_{311}$ & $e_{321}$ & $e_{331}$ \\
\hline
10 & 11 & 12 & 13 & 14 & 15 & 16 & 17 & 18 \\
\hline
$e_{112}$ & $e_{122}$ & $e_{132}$ & $e_{212}$ & $e_{222}$ & $e_{232}$ 
& $e_{312}$ & $e_{322}$ & $e_{332}$ \\
\hline 
\end{tabular}

\end{center}

\normalsize

\vskip 10pt

Let $\coorde_i$ be the coordinate vector of 
$\aff^6$ with respect to the $i$-th coordinate
and $\coordf_i$ the coordinate vector of 
$\aff^2$ with respect to the $i$-th coordinate. 
We put $e_{i_1i_2,i_3}=(\coorde_{i_1}\wedge \coorde_{i_2})\otimes \coordf_{i_3}$
for $i_1,i_2=1\ccd 6,i_3=1,2$. The numbering used in \cite{tajima-yukie-GIT1} 
for (2) is as follows. 

\vskip 10pt

\tiny

\begin{center}
 
\begin{tabular}{|c|c|c|c|c|c|c|c|c|c|c|c|c|c|c|}
\hline
1 & 2 & 3 & 4 & 5 & 6 & 7 & 8 & 9 & 10 & 11 & 12 & 13 & 14 & 15 \\
\hline
$e_{121}$ & $e_{131}$ & $e_{141}$ & $e_{151}$ & $e_{161}$ 
& $e_{231}$ & $e_{241}$ & $e_{251}$ & $e_{261}$ & $e_{341}$ & $e_{351}$ 
& $e_{361}$ & $e_{451}$ & $e_{461}$ & $e_{561}$ \\
\hline
16 & 17 & 18 & 19 & 20 & 21 & 22 & 23 & 24 & 25 & 26 & 27 & 28 & 29 & 30  \\
\hline
$e_{122}$ & $e_{132}$ & $e_{142}$ & $e_{152}$ & $e_{162}$ & $e_{232}$ 
& $e_{242}$ & $e_{252}$ & $e_{262}$ & $e_{342}$ & $e_{352}$ & $e_{362}$ 
& $e_{452}$ & $e_{462}$ & $e_{562}$ \\ 
\hline 
\end{tabular}

\end{center}

\normalsize

\vskip 10pt

\section{Non-empty strata for the case (1)}
\label{sec:non-empty-strata1}

In this section and the next, we consider the 
case (1). We put $G_1=G_2=\gl_3,G_3=\gl_2$
and $G=G_1\times G_2\times G_3$. Let $G_{\text{st}},M_{\be}$, etc., 
be as in Section \ref{sec:notation}. The set $\gB$ consists 
of $49$ $\be_i$'s. We use the table in 
Section 7 \cite{tajima-yukie-GIT1}. 
We shall prove that $S_{\be_i}\not=\emptyset$ for 
\begin{equation}
\label{eq:list-non-empty}
i = 4,5,6,11,22,28,29,38,39,40,41,42,45,47,48,49
\end{equation}
for the \pv{} (1) in this section. We shall 
prove that $S_{\be_i}=\emptyset$ for other $\be_i$'s 
for the \pv{} (1) in the next section.

For the case (1), the following observation is useful. 
Suppose that 
\begin{equation*}
\be=(b_{11},b_{12},b_{13};b_{21},b_{22},b_{23};b_{31},b_{32})\in\gB.
\end{equation*}
Then 
\begin{equation*}
\sig(\be)\stackrel{\rm def}=
(b_{21},b_{22},b_{23};b_{11},b_{12},b_{13};b_{31},b_{32})\in\gB
\end{equation*}
also. It is easy to see that $S_{\be}\not=\emptyset$ 
if and only if $S_{\sig(\be)}\not=\emptyset$. Also 
$G_k\backslash Y^{\sst}_{\be\, k}$ is in bijective correspondence with 
$G_k\backslash Y^{\sst}_{\sig(\be)\, k}$. 
Therefore, we handle $\be$ and $\sig(\be)$ simultaneously. 
We list $\{\be,\sig(\be)\}$ such that $\sig(\be)\not=\be$ in 
the following. 
\begin{align*}
& \{\be_1,\be_7\},\; 
\{\be_2,\be_3\},\; 
\{\be_4,\be_5\},\;
\{\be_9,\be_{10}\},\;
\{\be_{12},\be_{13}\},\;
\{\be_{14},\be_{15}\}, \\
& \{\be_{17},\be_{23}\},\;
\{\be_{18},\be_{24}\},\;
\{\be_{19},\be_{25}\},\;
\{\be_{20},\be_{26}\},\;
\{\be_{21},\be_{27}\},\;
\{\be_{30},\be_{34}\}, \\
& \{\be_{31},\be_{35}\},\;
\{\be_{32},\be_{36}\},\;
\{\be_{33},\be_{37}\},\; 
\{\be_{40},\be_{41}\},\;
\{\be_{43},\be_{44}\},\;  
\{\be_{47},\be_{48}\}.          
\end{align*}
For other $\be_i$'s, $\sig(\be_i)=\be_i$.

What we are going to do in this section is to find a non-constant invariant 
polynomial on $Z_{\be}$ with respect to the action of $G_{\text{st},\be}$. 
Moreover, we shall describe the set of rational orbits 
$G_k\backslash S_{\be}$. Since 
$S_{\be}\cong G_k\times_{P_{\be\, k}} Y^{\sst}_{\be\, k}$, 
it is enough to describe the set  
$P_{\be\, k} \backslash Y_{\be\, k}^{\sst}$. 
It turns out that $Y_{\be\, k}^{\sst}=U_{\be\, k}Z_{\be\, k}^{\sst}$ 
and so it is enough to describe the set  
$M_{\be\, k} \backslash Z_{\be\, k}^{\sst}$. 
Note that we measure the stability with respect to 
$G_{\text{st},\be}$, but we consider the group $M_{\be}$ when 
we consider rational orbits.

Note that $\lam_{\be}$ acts on $Z_{\be}$ by scalar multiplication. 
Also stability does not change by replacing $k$ by $\overline k$. 
Therefore, it is enough to find a non-constant polynomial 
$P(x)$ and a character $\chi$ of $M^1_{\be}$ 
proportional to $\chi_{\be}$ 
such that $P(gx)=\chi(g)P(x)$ for $M^1_{\be}$, $x\in Z_{\be}$. 
Then $P(gx)=P(x)$ for $g\in G_{\text{st},\be}$.  

The following table describes $M_{\be}$, $Z_{\be}$ as a 
\rep{} of $M_{\be}$, the coordinates of $Z_{\be},W_{\be}$ 
and $G_k\backslash S_{\be\, k}\cong P_{\be\,k}\backslash Y^{\sst}_{\be\,k}$. 

\vskip 10pt

\begin{center}
 
\begin{tabular}{|c|l|l|}
\hline 
\size $i$ & $\;$ \hskip 1in \size $M_{\be_i}$ \hskip 1in $\;$
& $\;$ \hskip 0.9in \size $Z_{\be_i}$ \hskip 0.9in $\;$ \\
\hline
\size $G_k\backslash S_{\be_i\, k}$  & 
\hskip 0.5in \size
coordinates of $Z_{\be_i}$
& \size \hskip 0.4in
coordinates of $W_{\be_i}$  \\
\hline \hline 
\size 4 & \size $M_{[1],\emptyset,\emptyset}\cong \gl_3\times \gl_2^2\times \gl_1$ 
& \size $\Lam^{3,1}_{2,[1,3]}\otimes \Lam^{2,1}_{1,[2,3]}\otimes \Lam^{2,1}_{3,[1,2]}$  \\
\hline
\size SP & \size
\begin{math}
\begin{matrix}
x_{211},x_{221},x_{231},x_{311},x_{321},x_{331}, \hfill \\ 
x_{212},x_{222},x_{232},x_{312},x_{322},x_{332} \hfill 
\end{matrix}
\end{math} 
& \size \hskip 0.8in -   \\
\hline \hline 
\size 5 & \size $M_{\emptyset,[1],\emptyset}\cong \gl_3\times \gl_2^2\times \gl_1$ 
& \size $\Lam^{3,1}_{1,[1,3]}\otimes \Lam^{2,1}_{2,[2,3]}\otimes \Lam^{2,1}_{3,[1,2]}$ \\
\hline
\size SP & \size
\begin{math}
\begin{matrix}
x_{121},x_{131},x_{221},x_{231},x_{321},x_{331}, \hfill \\ 
x_{122},x_{132},x_{222},x_{232},x_{322},x_{332} \hfill 
\end{matrix}
\end{math} 
& \size \hskip 0.8in -   \\
\hline \hline 
\size 6 & \size $M_{[1],[1],[1]}\cong \gl_2^2\times \gl_1^4$ 
& \size $\Lam^{2,1}_{1,[2,3]}\otimes \Lam^{2,1}_{2,[2,3]}
\oplus \Lam^{2,1}_{2,[2,3]} \oplus \Lam^{2,1}_{1,[2,3]}$ \\
\hline
\size SP & \size
\begin{math}
\begin{matrix}
x_{221},x_{231},x_{321},x_{331}, \hfill 
x_{122},x_{132},x_{212},x_{312}  \hfill 
\end{matrix}
\end{math} 
& \size $x_{222},x_{232},x_{322},x_{332}$   \\
\hline \hline 
\size 11 & \size $M_{[1,2],[1,2],[1]}\cong \gl_1^8$ 
& \size $1^{5\oplus}$ \\
\hline
\size SP & \size
\begin{math}
\begin{matrix}
x_{231},x_{321},x_{132},x_{222},x_{312}
\end{matrix}
\end{math} 
& \size $x_{331},x_{232},x_{322},x_{332}$   \\
\hline \hline
\size 22 & \size $M_{[2],[2],\emptyset}\cong \gl_2^3\times \gl_1^2$
& \size $\Lam^{2,1}_{1,[1,2]}\otimes \Lam^{2,1}_{3,[1,2]}\oplus 
\Lam^{2,1}_{2,[1,2]}\otimes \Lam^{2,1}_{3,[1,2]}$ \\
\hline
\size SP & \size
\begin{math}
x_{131},x_{231},x_{311},x_{321},x_{132},x_{232},x_{312},x_{322} 
\end{math} 
& \size $x_{331},x_{332}$ \\
\hline \hline 
\size 28 & \size $M_{[1,2],[1,2],[1]}\cong \gl_1^8$
& \size $1^{4\oplus}$ \\
\hline
\size SP & \size $x_{331},x_{132},x_{222},x_{312}$
& \size $x_{232},x_{322},x_{332}$ \\
\hline \hline
\size 29 & \size $M_{[1],[1],\emptyset}\cong \gl_2^3\times \gl_1^2$ 
& \size $\Lam^{2,1}_{1,[2,3]}\otimes \Lam^{2,1}_{2,[2,3]}\otimes \Lam^{2,1}_{3,[1,2]}$ \\
\hline
\size $\Ex_2(k)$ & \size $x_{221},x_{231},x_{321},x_{331},x_{222},x_{232},x_{322},x_{332}$ 
&  \size \hskip 0.8in - \\
\hline \hline 
\size 38 & \size $M_{\emptyset,\emptyset,[1]}\cong \gl_3^2\times \gl_1^2$
& \size $\Lam^{3,1}_{1,[1,3]}\otimes \Lam^{3,1}_{1,[1,3]}$ \\
\hline
\size SP & \size
\begin{math}
\begin{matrix}
x_{112},x_{122},x_{132},x_{212},x_{222}, \hfill \\
x_{232},x_{312},x_{322},x_{332} \hfill 
\end{matrix}
\end{math}
& \size \hskip 0.8in - \\ 
\hline \hline 
\size 39 & \size $M_{[1,2],[1,2],[1]}\cong \gl_1^8$
& \size $1^{3\oplus}$ \\
\hline
\size SP & \size $x_{331},x_{232},x_{322}$
& \size $x_{332}$ \\
\hline \hline 
\size 40 & \size $M_{[1,2],[1],[1]}\cong \gl_2\times \gl_1^6$ 
& \size $(\Lam^{2,1}_{2,[2,3]})^{2\oplus} \oplus 1$  \\
\hline
\size SP & \size $x_{321},x_{331},x_{222},x_{232},x_{312}$ 
& \size $x_{322},x_{332}$ \\
\hline
\end{tabular}

\end{center}

\begin{center}
 
\begin{tabular}{|c|l|l|}
\hline 
\size $i$ & $\;$ \hskip 1in \size $M_{\be_i}$ \hskip 1in $\;$
& $\;$ \hskip 0.9in \size $Z_{\be_i}$ \hskip 0.9in $\;$ \\
\hline
\size $G_k\backslash S_{\be_i\, k}$  & 
\hskip 0.5in \size
coordinates of $Z_{\be_i}$
& \size \hskip 0.4in
coordinates of $W_{\be_i}$  \\
\hline \hline 
\size 41 & \size $M_{[1],[1,2],[1]}\cong \gl_2\times \gl_1^6$ 
& \size $\Lam^{2,1}_{1,[2,3]}\oplus 1\oplus \Lam^{2,1}_{1,[2,3]}$  \\
\hline
\size SP & \size $x_{231},x_{331},x_{132},x_{222},x_{322}$
& \size $x_{232},x_{332}$ \\ 
\hline \hline 
\size 42 & \size $M_{[2],[2],[1]}\cong \gl_2^2\times \gl_1^4$ 
& \size $1\oplus \Lam^{2,1}_{1,[1,2]}\otimes \Lam^{2,1}_{2,[1,2]}$ \\
\hline
\size SP & \size $x_{331},x_{112},x_{122},x_{212},x_{222}$
& \size $x_{132},x_{232},x_{312},x_{322},x_{332}$ \\
\hline \hline
\size 45 & \size $M_{[1],[1],[1]}\cong \gl_2^2\times \gl_1^4$
& \size $\Lam^{2,1}_{1,[2,3]}\otimes \Lam^{2,1}_{2,[2,3]}$ \\
\hline
\size SP & \size $x_{222},x_{232},x_{322},x_{332}$
& \size \hskip 0.8in - \\
\hline \hline 
\size 47 & \size $M_{[2],[1],\emptyset}\cong \gl_2^3\times \gl_1^2$
& \size $\Lam^{2,1}_{2,[2,3]}\otimes \Lam^{2,1}_{3,[1,2]}$ \\
\hline
\size SP & \size $x_{321},x_{331},x_{322},x_{332}$
& \size \hskip 0.8in - \\
\hline 
\size 48 & \size $M_{[1],[2],\emptyset}\cong \gl_2^3\times \gl_1^2$
& \size $\Lam^{2,1}_{1,[2,3]}\otimes \Lam^{2,1}_{3,[1,2]}$ \\
\hline
\size SP & \size $x_{231},x_{331},x_{232},x_{332}$
& \size \hskip 0.8in - \\
\hline \hline
\size 49 & \size $M_{[2],[2],[1]}\cong \gl_2^2\times \gl_1^4$
& \size $1$ \\
\hline
\size SP & \size $x_{332}$
& \size \hskip 0.8in - \\
\hline 

\end{tabular}

\end{center}

\vskip 5pt

For example, 

\begin{center}
 
\begin{tabular}{|c|l|l|}
\hline 
\size $\;$ \hskip 0.15in 6 \hskip 0.15in $\;$ & 
\size $M_{[1],[1],[1]}\cong \gl_2^2\times \gl_1^4$ 
\hskip 1.05in $\;$ 
& \size 
$\Lam^{2,1}_{1,[2,3]}\otimes \Lam^{2,1}_{2,[2,3]}
\oplus \Lam^{2,1}_{2,[2,3]} \oplus \Lam^{2,1}_{1,[2,3]}$ 
\hskip 0.2in $\;$ \\
\hline
\size SP & \size
\begin{math}
\begin{matrix}
x_{221},x_{231},x_{321},x_{331}, \hfill 
x_{122},x_{132},x_{212},x_{312}  \hfill 
\end{matrix}
\end{math} 
& \size $x_{222},x_{232},x_{322},x_{332}$   \\
\hline
\end{tabular}

\end{center}

\vskip 5pt
\noindent
means the following. 

\begin{itemize}
\item[(1)]
The stratum in question is $S_{\be_6}$.
\item[(2)]
$M_{\be_6}= M_{[1],[1],[1]}$. 
\item[(3)]
$Z_{\be_6}$ is equivalent to 
$\Lam^{2,1}_{1,[2,3]}\otimes \Lam^{2,1}_{2,[2,3]}
\oplus \Lam^{2,1}_{2,[2,3]} \oplus \Lam^{2,1}_{1,[2,3]}$
as \rep s of $M_{\be_6}$ ignoring $\gl_1$'s in $M_{\be_6}$. 
\item[(4)]
$G_k\backslash S_{\be_6\, k}\cong P_{\be_6\,k}\backslash Y^{\sst}_{\be_6\,k}$
consists of a single point. 
\item[(5)]
$Z_{\be_6}$ is spanned by  
\begin{math}
e_{221},e_{231},e_{321},e_{331}, 
e_{122},e_{132},e_{212},e_{312}. 
\end{math}
\item[(6)] 
$W_{\be_6}$ is spanned by 
\begin{math}
e_{222},e_{232},e_{322},e_{332}. 
\end{math}
\end{itemize}

\vskip 5pt

Note that if $M_{\be}\cong M_{[1],[1],[1]}$ for example, 
then $P_{\be}=P_{[1],[1],[1]}$. So we did not list $P_{\be}$'s. 

Before considering individual cases, we prove a lemma
which will be needed in some cases. 
Let $G=\gl_3\times \gl_2^2,\; E=\aff^3,\; H=\m_2$ and $Z=E\otimes H$. 
We consider the action of $G$ on $Z$ defined  by 
$(g_1,g_2,g_3)(e\otimes A)=(g_1e)\otimes (g_2A{}^t g_3)$. 
Let $\{\coorde_1,\coorde_2,\coorde_3\}$ be the standard basis of $E$. 
We define a map $\Phi:Z\to \wedge^3 H$ by 
\begin{equation*}
\Phi: Z\ni \sum_{i=1}^3 \coorde_i\otimes h_i \mapsto 
h_1\wedge h_2\wedge h_3\in \wedge^3 H. 
\end{equation*}
This is the ``Castling transform'' and is a special case 
of the Pl\"ucker coordinate. 
Since $\dim H=4$, $\wedge^3 H\cong H^*$ (the dual space). 
Note that $H^*\cong (\aff^2)^*\otimes (\aff^2)^*\cong \m_2$. 
As a \rep{} of $\gl_2^2$, $H^*$ is $(\det g_2)^2(\det g_3)^2$ 
times the tensor product of contragredient \rep s 
of the standard \rep s of two $\gl_2$'s.  

Let $E_{ij}\in \m_2$ be the matrix whose $(i,j)$-entry 
is $1$ and other entries are zero. 
Let 
\begin{equation}
\label{eq:R-defn}
R= \coorde_1\otimes (-E_{11}+E_{22})
+ \coorde_2\otimes E_{12}
+ \coorde_3\otimes E_{21}.
\end{equation}
Let $\{E_{ij}^*\}$ be the dual basis of 
$\{E_{11},E_{12},E_{21},E_{22}\}$. 
Regarding $E_{11}^*=E_{12}\wedge E_{21}\wedge E_{22}$, 
\begin{equation}
\label{eq:RtoI}
\Phi(R)=I_2.
\end{equation}

By the comment after (\ref{eq:bilinear-gl2-property}), 
we obtain the following lemma.
\begin{lem}
\label{lem:Phi-equivariant}
In the above situation, 
\begin{equation*}
\Phi(gx) = (\det g_1)(\det g_2)(\det g_3)\theta(g_2)\Phi(x){}^t\theta(g_3).
\end{equation*}
\end{lem}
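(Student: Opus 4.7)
The plan is to decompose the action of $g = (g_1, g_2, g_3) \in \gl_3 \times \gl_2 \times \gl_2$ into the $g_1$ factor and the $(g_2, g_3)$ factor and analyze each separately. For $(g_1, I, I)$, writing $g_1 \coorde_i = \sum_j a_{ji} \coorde_j$ and using multilinearity plus antisymmetry of the wedge product, I expect to obtain immediately
\[
\Phi((g_1, I, I) x) = \det(g_1) \cdot h_1 \wedge h_2 \wedge h_3 = (\det g_1)\Phi(x).
\]

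Next I would treat the action of $(I, g_2, g_3)$. Each $h_i$ is replaced by $T(h_i)$, where $T: H \to H$ is the linear map $T(A) = g_2 A\, {}^tg_3$, so $\Phi((I, g_2, g_3) x) = (\wedge^3 T)(\Phi(x))$. Under the prescribed identification $\wedge^3 H \cong H^*$ determined by $E_{11}^* = E_{12}\wedge E_{21}\wedge E_{22}$ and the analogous formulas for the remaining $E_{ij}^*$, standard Pl\"ucker duality yields that $\wedge^3 T$ corresponds on $H^*$ to $(\det T)\cdot (T^*)^{-1}$. A direct computation then gives $\det T = (\det g_2)^2 (\det g_3)^2$ and $T^*(B) = {}^tg_2 \, B \, g_3$. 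Further identifying $H^* \cong \m_2$ via $E_{ij}^* \leftrightarrow E_{ij}$ (the identification implicit in the given equation $\Phi(R) = I_2$), the induced action on $\m_2$ becomes
\[
B \mapsto (\det g_2)^2 (\det g_3)^2 \cdot {}^t(g_2^{-1})\, B\, g_3^{-1}.
\]

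To finish, I would invoke the comment following (\ref{eq:bilinear-gl2-property}) in the forms ${}^t(g_2^{-1}) = (\det g_2)^{-1}\theta(g_2)$ and $g_3^{-1} = (\det g_3)^{-1}\, {}^t\theta(g_3)$; substituting these into the previous display and combining with the $\det g_1$ factor from the first step produces exactly the claimed formula.

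The main obstacle I anticipate is careful bookkeeping: tracking the signs when translating between $\{E_{12}\wedge E_{21}\wedge E_{22}, \ldots\}$ and the dual basis $\{E_{ij}^*\}$, and confirming that the scalar $\det T$ arising from Pl\"ucker duality combines correctly with the two $\theta$-twist identities to yield precisely $(\det g_2)(\det g_3)$ in the final expression. The conceptual heart of the argument lies in the cited comment, which realizes the determinant-twisted contragredient of the standard $\gl_2$ representation as the involution $\theta$.
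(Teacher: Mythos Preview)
Your proposal is correct and follows essentially the same route as the paper. The paper's argument is extremely terse: the paragraph preceding the lemma already records that $H^*$, as a $\gl_2^2$-representation, is $(\det g_2)^2(\det g_3)^2$ times the tensor product of contragredients (your Pl\"ucker-duality computation of $\wedge^3 T$), and the proof itself is the single sentence ``By the comment after (\ref{eq:bilinear-gl2-property}),'' which is exactly your final substitution ${}^tg^{-1}=(\det g)^{-1}\theta(g)$. You have simply unpacked in full the bookkeeping that the paper leaves to the reader; the $\det g_1$ factor from the $\gl_3$-action is also implicit in the paper's description of $\Phi$ as a Pl\"ucker coordinate.
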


We now verify that $S_{\be_i}\not=\emptyset$ and 
determine $G_k\backslash S_{\be_i\, k}$
for the above $i$'s.

\vskip 10pt

(1) $\be_4 = \tfrac {1} {6} (-2,1,1,0,0,0,0,0)$, 
$\be_5 = \tfrac {1} {6} (0,0,0,-2,1,1,0,0)$.  
We only consider $\be_4$. 

We identify the element 
$(\diag(t_1,g_1),g_2,g_3)\in M_{[1],\emptyset,\emptyset}$
with $g=(g_2,g_1,g_3,t_1)\in \gl_3\times \gl_2^2\times \gl_1$.
It is easy to see that 
$\chi_{\be_4}(g)=t_1^{-2}(\det g_1)$ for $g\in M_{\be_4}$. 
If $g\in M^1_{\be_4}$ then  $\chi_{\be_4}(g)=(\det g_1)^3$.

Let $\{\coorde_{i,1},\coorde_{i,2},\coorde_{i,3}\}$ 
be the standard basis of the $i$-th $\aff^3$ 
and $\{\coordf_1,\coordf_2\}$  
the standard basis of $\aff^2$.  
So $\{\coorde_{2,1},\coorde_{2,2},\coorde_{2,3}\}$ 
is a basis of $\Lam^{3,1}_{2,[1,3]}$. We put 
$H=\Lam^{2,1}_{1,[2,3]}\otimes \Lam^{2,1}_{3,[1,2]}$. 
We identify $H$ with $\m_2$ so that elements of 
$\Lam^{2,1}_{1,[2,3]}$ (resp. $\Lam^{2,1}_{3,[1,2]}$) 
correspond to columns (resp. rows). 

Let $\Phi:Z_{\be_4}\to \wedge^3 H \cong H^* \cong \m_2$ be the 
following map: 
\begin{equation*}
\Phi: Z_{\be_4} = \Lam^{3,1}_{2,[1,3]}\otimes H\ni 
\sum_{i=1}^3 \coorde_{2,i}\otimes h_i \mapsto 
h_1\wedge h_2\wedge h_3\in \wedge^3 H \cong H^*. 
\end{equation*}
Let $P(x)=\det \Phi(x)$. Lemma \ref{lem:Phi-equivariant}
implies that $P(gx)=(\det g_1)^3 (\det g_2)^2(\det g_3)^3P(x)$. 
Note that $g_2$ corresponds to $g_1$ in Lemma \ref{lem:Phi-equivariant}.
If $g\in M^1_{\be_4}$ then $t_1\det g_1=1,\det g_2=\det g_3=1$.  
So the character 
$(\det g_1)^3 (\det g_2)^2(\det g_3)^3=(\det g_1)^3$ is 
$\chi_{\be_4}$ and $P(x)$ is invariant by the action of $G_{\text{st},\be_4}$. 
Therefore, 
\begin{equation*}
Z^{\sst}_{\be_4\, k} = \{x\in Z_{\be_4\, k}\mid \det \Phi(x)\not=0\}. 
\end{equation*}

It is known that 
if $x,y\in Z_{\be_4\,k}$ and $\Phi(x),\Phi(y)\not=0$ then 
there exists $g_1\in\gl_3(k)$ such that $x=g_1y$ if and only if 
$\Phi(x)$ is a scalar multiple of $\Phi(y)$. 
Since $\{A\in\m_2(k)\mid \det A\not=0\}$ is a single 
$\gl_2(k)$-orbit, $M_{\be_4\, k}\backslash Z^{\sst}_{\be_4}$ 
consists of a single point. 

Let 
\begin{equation*}
R(4) = \coorde_{2,1} \otimes (-E_{11} + E_{22})   
+ \coorde_{2,2} \otimes E_{12} 
+ \coorde_{2,3} \otimes E_{21}. 
\end{equation*}
By (\ref{eq:RtoI}), $\Phi(R(4))=I_2$. Since $W_{\be_4}=\{0\}$, $Z^{\sst}_{\be_4} = M_{\be_4\, k}R(4)$. 

\vskip 5pt




(2) $\be_6 = \tfrac {1} {66} (-4,2,2,-4,2,2,-3,3)$.  

We identify the element 
$(\diag(t_1,g_1),\diag(t_2,g_2),t_{31},t_{32})\in M_{[1],[1],[1]}$
with $g=(g_1,g_2,t_1,t_2,t_{31},t_{32})\in \gl_2^2\times \gl_1^4$. 
On $M^1_{\be_6}$, 
%
\begin{equation*}
\chi_{\be_6}(g) = (t_1t_2)^{-4}(\det g_1)^2(\det g_2)^2 
t_{31}^{-3}t_{32}^{3} 
= ((\det g_1)(\det g_2)t_{32})^6. 
\end{equation*}

Let 
\begin{equation*}
\begin{aligned}
A(x) & = 
\begin{pmatrix}
x_{221} & x_{231} \\
x_{321} & x_{331} 
\end{pmatrix}, \quad
v_1(x) = 
\begin{pmatrix}
x_{212} \\ x_{312}
\end{pmatrix}, \quad 
v_2(x) = 
\begin{pmatrix}
x_{122} \\ x_{132}
\end{pmatrix}, \\
P_1(x) & = \det A(x), \quad 
P_2(x) = {}^t v_1(x) \theta(A(x)) v_2(x).
\end{aligned}
\end{equation*}
Then 
\begin{equation*}
A(gx) = t_{31} g_1 A(x) {}^t g_2, \quad
v_1(gx) = t_2t_{32}g_1 v_1(x), \quad
v_2(gx) = t_1t_{32}g_2 v_2(x).  
\end{equation*}
On $M^1_{\be_6}$, 
\begin{equation*}
P_1(gx) = (\det g_1)(\det g_2)t_{31}^2P_1(x)
=(\det g_1)(\det g_2)t_{32}^{-2}P_1(x). 
\end{equation*}
Also on $M^1_{\be_6}$, 
\begin{align*}
P_2(gx) & =  t_1t_2t_{31}t_{32}^2{}^t (g_1v_1(x)) \theta(g_1A(x){}^tg_2) g_2 v_2(x) \\
& = t_1t_2t_{32}{}^t v_1(x) {}^t g_1 \theta(g_1)\theta(A(x)) \theta({}^tg_2) g_2 v_2(x) \\
& = t_1t_2t_{32}(\det g_1)(\det g_2){}^t v_1(x) {}^tg_1 {}^tg_1^{-1} 
\theta(A(x)) g_2^{-1}g_2 v_2(x) \\
& = t_{32} P_2(x). 
\end{align*}
Therefore, if we put $P(x) = P_1(x)P_2(x)^3$ then  
$P(gx)=(\det g_1)(\det g_2)t_{32}P(x)$ and 
the character $(\det g_1)(\det g_2)t_{32}$ is 
proportional to $\chi_{\be_6}$. Hence, 
$P(x)$ is invariant under the action of $G_{\text{st},\be_6}$. 
This implies that 
\begin{equation*}
Z^{\sst}_{\be_6\,k} = \{x\in Z_{\be_6\,k}
\mid \det A(x)\not=0,{}^tv_1(x)A(x)v_2(x)\not=0\}. 
\end{equation*}
\begin{prop}
\label{prop:orbit-beta6}
Let $R(6)\in Z_{\be_6\,k}$ be the element such that $A(x)=I_2$, $v_1(x)=v_2(x)=[1,0]$.
Then $Y^{\sst}_{\be_6\,k}=P_{\be_6\, k}R(6)$. 
\end{prop}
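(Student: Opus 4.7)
The plan is to reduce the proposition to a transitivity statement on $Z^{\sst}_{\be_6\,k}$.  As recalled earlier in the section, $Y^{\sst}_{\be_6\,k}=U_{\be_6\,k}Z^{\sst}_{\be_6\,k}$, so the orbit decomposition of $Y^{\sst}_{\be_6\,k}$ under $P_{\be_6\,k}$ reduces to that of $Z^{\sst}_{\be_6\,k}$ under $M_{\be_6\,k}$, and it is therefore enough to prove $Z^{\sst}_{\be_6\,k}=M_{\be_6\,k}R(6)$.  The point $R(6)$ is indeed semistable, since $\det A(R(6))=\det I_2=1$ and ${}^t v_1(R(6))\theta(I_2)v_2(R(6))={}^t[1,0]\,I_2\,[1,0]=1$ are both nonzero.

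Given an arbitrary $x\in Z^{\sst}_{\be_6\,k}$, I would normalize it to $R(6)$ in two phases.  In Phase I, apply the element of $M_{\be_6\,k}$ with $g_1=A(x)^{-1}$, $g_2=I_2$, and $t_1=t_2=t_{31}=t_{32}=1$.  By the transformation rule $A(gx)=t_{31}g_1A(x){}^tg_2$ this sends $A(x)$ to $I_2$, while the two column vectors become $v_1'=A(x)^{-1}v_1(x)$ and $v_2'=v_2(x)$.  Since $\theta(A(x))=(\det A(x)){}^tA(x)^{-1}$, the semistability condition $P_2(x)\neq 0$ translates into ${}^tv_1'\,v_2'\neq 0$, so in particular both $v_1'$ and $v_2'$ are nonzero.

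Phase II is carried out inside the stabilizer in $M_{\be_6}$ of the condition $A=I_2$, which is defined by $t_{31}g_1{}^tg_2=I_2$, i.e., $g_2=t_{31}^{-1}({}^tg_1)^{-1}$.  On this subgroup the transformation rules become $v_1\mapsto t_2t_{32}g_1v_1$ and $v_2\mapsto (t_1t_{32}/t_{31})({}^tg_1)^{-1}v_2$, a twisted combination of the standard $\gl_2$-representation and its contragredient.  Writing $v_1'=[\al,\be]$, $v_2'=[\gam,\del]$ and $\lam={}^tv_1'\,v_2'=\al\gam+\be\del\neq 0$, I would take $g_1=\left(\begin{smallmatrix}\gam & \del \\ -\be & \al\end{smallmatrix}\right)$, which has determinant $\lam$ and satisfies $g_1v_1'=[\lam,0]$ and $({}^tg_1)^{-1}v_2'=[1,0]$; the subsequent choice $t_{32}=1$, $t_2=1/\lam$, $t_1=t_{31}=1$ then completes the normalization $v_1=v_2=[1,0]$.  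Composing with Phase I produces $g\in M_{\be_6\,k}$ with $gx=R(6)$, whence $Z^{\sst}_{\be_6\,k}=M_{\be_6\,k}R(6)$ and the proposition follows.

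The main delicate point is Phase II, where one must juggle the stabilizer condition on $(g_1,g_2,t_{31})$ against the need to simultaneously normalize both $v_1$ and $v_2$ using only the residual freedom; the explicit form of $g_1$ above makes this transparent, with the final verification reducing to elementary matrix algebra and the nonvanishing of $\lam$.  All choices remain within $k$ because $\al,\be,\gam,\del,\lam\in k$, so no extension of scalars is required.
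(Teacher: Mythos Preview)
Your argument that $Z^{\sst}_{\be_6\,k}=M_{\be_6\,k}R(6)$ is correct, and your Phase~II is a bit cleaner than the paper's: the paper first normalizes $v_2$ to $[1,0]$ using an element of the form $(\diag(1,g_1),\diag(1,{}^tg_1^{-1}))$, then rescales and applies a further unipotent element of $M_{\be_6}$ to normalize $v_1$, whereas your single choice of $g_1$ handles both vectors at once.

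There is, however, a genuine gap in your reduction step. The sentence ``$Y_{\be\,k}^{\sst}=U_{\be\,k}Z_{\be\,k}^{\sst}$'' at the beginning of the section is not an already-established fact being recalled; the phrase ``it turns out'' announces the outcome of the case-by-case analysis to follow. For $\be_6$ one has $W_{\be_6}\neq\{0\}$ (it is spanned by $e_{222},e_{232},e_{322},e_{332}$), so after reducing an arbitrary $x\in Y^{\sst}_{\be_6\,k}$ to the form $(R(6),w)$ with $w\in W_{\be_6\,k}$ it still remains to show that $(R(6),w)\in P_{\be_6\,k}R(6)$. The paper supplies this step by an explicit computation: for $n(u)\in U_{\be_6\,k}$ with $u_{121}=u_{132}=u_{232}=0$ one has $n(u)(R(6),w)=(R(6),w')$ with
\begin{align*}
x_{222}' &= x_{222}+u_{321}+u_{221}, &
x_{232}' &= x_{232}+u_{231},\\
x_{322}' &= x_{322}+u_{131}, &
x_{332}' &= x_{332}+u_{321},
\end{align*}
and one can solve for $u_{131},u_{221},u_{231},u_{321}\in k$ to make $w'=0$. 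Without this unipotent step (or a general lemma replacing it) your argument only proves $Z^{\sst}_{\be_6\,k}=M_{\be_6\,k}R(6)$, not the proposition.
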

\begin{proof}
Let $x\in Z^{\sst}_{\be_6\, k}$. 
By assumption, $\det A(x)\not=0,{}^tv_1(x)A(x)v_2(x)\not=0$. 
By the action of $g_1\in \gl_2(k)\sub G_{1\, k}$, 
we may assume that $A(x)=I_2$. By assumption, $v_2(x)\not=0$. 
By the action of an element of the form 
$(\diag(1,g_1),\diag(1,{}^tg_1^{-1}))\in G_{1\,k}\times G_{2\,k}$, 
we may assume that $v_2(x)=[1,0]$. By assumption, 
$x_{212}\not=0$. Applying an element of the form  
$g=(I_2,I_2,1,t_2,1,1)$, we may assume $x_{212}=1$. 
Applying the element 
$(n_2(-x_{312}),{}^tn_2(x_{312}),1,1,1,1)$, 
we can make $x_{312}=0$ (see the beginning of Section \ref{sec:notation}). 
Therefore, we may assume that 
$v_1(x)=[1,0]$. So $x=R(6)$.

If $x\in Y^{\sst}_{\be_6\,k}$ then 
by the above consideration, we may assume that 
$x$ is in the form $x=(R(6),w)$
where $w=(x_{222},x_{232},x_{322},x_{332})$.

Let  $u_1=(u_{1ij})_{3\geq i>j\geq 1}$, 
$u_2=(u_{2ij})_{3\geq i>j\geq 1}$, 
$u_3=u_{321}$ and put  
\begin{equation}
\label{eq:n(u)-defn}
n(u) = (n_3(u_1),n_3(u_2),n_2(u_3)). 
\end{equation}
This element $n(u)$ belongs to $U_{\be_6}$ if and only if 
$u_{132}=u_{232}=0$.

Let $\coorde_{i,1},\coordf_i$, etc., be as before.  
By the action of $n(u)$, 
\begin{equation*}
\coorde_{i,1}\mapsto \coorde_{i,1} + u_{i21}\coorde_{i,2}+ u_{i31}\coorde_{i,3},\;
\coorde_{i,2}\mapsto \coorde_{i,2},\;
\coorde_{i,3}\mapsto \coorde_{i,3}
\end{equation*}
for $i=1,2$ and 
\begin{equation*}
\coordf_1\mapsto \coordf_1 + u_{321}\coordf_2, 
\coordf_2\mapsto \coordf_2. 
\end{equation*}

We consider $u$ such that $u_{121}=0$ 
($u_{132}=u_{232}=0$ is still assumed). 
Let $n(u)(R(6),w)=(R(6),w')$. Then $w'=(x_{222}'\ccd x_{332}')$
where 
\begin{align*}
x_{222}' & = x_{222}+u_{321} + u_{221}, \quad
x_{232}' = x_{232}+u_{231}, \\
x_{322}' & = x_{322}+u_{131}, \quad
x_{332}' = x_{332}+u_{321}.
\end{align*}
So we can choose $u$ so that $w'=(0,0,0,0)$. 
\end{proof} 

\vskip 5pt

(3) $\be_{11} = \tfrac {1} {18} (-2,0,2,-2,0,2,-1,1)$.  



We identify the element 
\begin{equation}
\label{eq:elementsofT}
t = (\diag(t_{11},t_{12},t_{13}),\diag(t_{21},t_{22},t_{23}),\diag(t_{31},t_{32}))
\in M_{[1,2],[1,2],[1]}
\end{equation}
with $t=(t_{11},t_{12},t_{13},t_{21},t_{22},t_{23},t_{31},t_{32})
\in \gl_1^8$. 
On $M^1_{\be_{11}}$, 
%
\begin{equation*}
\chi_{\be_{11}}(t) = t_{11}^{-2}t_{13}^2t_{21}^{-2}t_{23}^2t_{31}^{-1}t_{32}
= t_{12}^2t_{13}^4t_{22}^2t_{23}^4t_{32}^2. 
\end{equation*}
Let $t e_{ijk}=\chi_{ijk}(t) e_{ijk}$ for $i,j=1,2,3,k=1,2$
where the left hand side is the action and the right hand side 
is a scalar multiplication. Then $\chi_{ijk}(t)$ for 
the coordinates of $Z_{\be_{11}}$ on $M_{\be_{11}}$ or 
$M^1_{\be_{11}}$ are as follows. 

\vskip 10pt

\tiny

\begin{center}
 
\begin{tabular}{|c|c|c|c|c|c|}
\hline 
\text{character} & $\chi_{231}(t)$ & $\chi_{321}(t)$ & $\chi_{132}(t)$ 
& $\chi_{222}(t)$ & $\chi_{312}(t)$ \\
\hline
on $M_{\be_{11}}$ & $t_{12}t_{23}t_{31}$ & $t_{13}t_{22}t_{31}$ & $t_{11}t_{23}t_{32}$
& $t_{12}t_{22}t_{32}$ & $t_{13}t_{21}t_{32}$  \\
\hline
on $M^1_{\be_{11}}$ & $t_{12}t_{23}t_{32}^{-1}$ & $t_{13}t_{22}t_{32}^{-1}$ 
& $t_{12}^{-1}t_{13}^{-1}t_{23}t_{32}$ & $t_{12}t_{22}t_{32}$ 
& $t_{13}t_{22}^{-1}t_{23}^{-1}t_{32}$ \\
\hline
\end{tabular}

\end{center} 

\normalsize

\vskip 10pt

If we put
\begin{math}
P(x) = (x_{231}x_{321}x_{132}x_{312})^4 x_{222}^2
\end{math}
then by straightforward computations, 
$P(tx)=t_{12}^2t_{13}^4t_{22}^2t_{23}^4t_{32}^2P(x)$ 
for $t\in M^1_{\be_{11}}$. 
So $P(x)$ is invariant under the action of $t\in G_{\text{st},\be_{11}}$. 
Therefore, 
\begin{equation*}
Z^{\sst}_{\be_{11}\,k}=\{x\in Z_{\be_{11}\,k}\mid 
x_{231},x_{321},x_{132},x_{222},x_{312}\in \mk\}. 
\end{equation*}

We consider $P_{\be_{11}\,k}\backslash Y^{\sst}_{\be_{11}\,k}$. 
Given $q_1,q_2,q_3,q_4,q_5\in\mk$, if we put 
$t_{22}=t_{23}=t_{31}=1$, 
$t_{12}=q_1,t_{13}=q_2,t_{32}=q_4q_1^{-1},t_{11}=q_1q_3q_4^{-1},
t_{21}=q_1q_2^{-1}q_4^{-1}q_5$ ($t\in M_{\be_{11}\, k}$) then 
$\chi_{231}(t)=q_1\ccd \chi_{312}(t)=q_5$. 
Let $R(11)=(1,1,1,1,1)\in Z_{\be_{11}\,k}$.
Then by the above consideration, 
$Z^{\sst}_{\be_{11}\,k}=M_{\be_{11}\, k}R(11)$. 

\begin{prop}
\label{prop:orbit-beta11}
$Y^{\sst}_{\be_{11}\,k}=P_{\be_{11}\, k}R(11)$. 
\end{prop}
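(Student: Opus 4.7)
The plan is to mirror the proof of Proposition \ref{prop:orbit-beta6}. Since we have already shown $Z^{\sst}_{\be_{11}\,k}=M_{\be_{11}\,k}R(11)$, and $M_{\be_{11}}$ preserves the decomposition $Y_{\be_{11}}=Z_{\be_{11}}\oplus W_{\be_{11}}$, an arbitrary $y\in Y^{\sst}_{\be_{11}\,k}$ can first be brought by an element of $M_{\be_{11}\,k}$ to the form $(R(11),w)$ with
\begin{equation*}
w = x_{331}e_{331} + x_{232}e_{232} + x_{322}e_{322} + x_{332}e_{332} \in W_{\be_{11}\,k}.
\end{equation*}
It then suffices to exhibit $n(u)\in U_{\be_{11}\,k}$ with $n(u)(R(11),w)=(R(11),0)$.

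Since $P_{\be_{11}}$ is the lower-triangular Borel, $U_{\be_{11}}$ consists of elements $n(u)=(n_3(u_1),n_3(u_2),n_2(u_3))$ as in (\ref{eq:n(u)-defn}), with all seven entries $u_{121},u_{131},u_{132},u_{221},u_{231},u_{232},u_{321}$ free, in contrast to the $\be_6$ case where $u_{132}=u_{232}=0$ was imposed. Using $\coorde_{i,1}\mapsto \coorde_{i,1}+u_{i21}\coorde_{i,2}+u_{i31}\coorde_{i,3}$, $\coorde_{i,2}\mapsto \coorde_{i,2}+u_{i32}\coorde_{i,3}$, and $\coordf_1\mapsto \coordf_1+u_{321}\coordf_2$, a direct expansion shows that $n(u)$ fixes the $Z_{\be_{11}}$-component of $R(11)$ while introducing corrections that land entirely in $W_{\be_{11}}$.

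Reading off the coefficients of $e_{331},e_{232},e_{322},e_{332}$ in $n(u)(R(11),w)$ then produces four affine equations in the seven unipotent parameters whose constant terms are $x_{331},x_{232},x_{322},x_{332}$. I would solve these successively: setting $u_{321}=u_{132}=0$ and $u_{232}=-x_{331}$ kills the $e_{331}$-equation; $u_{121}=x_{331}-x_{232}$ and $u_{221}=-x_{322}$ then kill the $e_{232}$- and $e_{322}$-equations; the remaining $e_{332}$-equation reduces to a single linear relation of the form $u_{131}+u_{231}=c$ for some constant $c\in k$, which visibly has a $k$-rational solution. No essential obstacle is anticipated: with seven parameters against four equations the linear system is severely underdetermined, and the only practical risk is a sign or indexing slip in transcribing the action of $n(u)$ on the nine basis vectors involved, a task entirely parallel to the $\be_6$ computation.
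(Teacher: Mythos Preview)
Your proposal is correct and follows essentially the same approach as the paper: reduce to $x=(R(11),w)$ via $M_{\be_{11}\,k}$, then use elements of $U_{\be_{11}\,k}$ to annihilate the four coordinates of $w$. The paper chooses the free parameters $u_{121},u_{131},u_{132},u_{221}$ (setting $u_{231}=u_{232}=u_{321}=0$), whereas you set $u_{132}=u_{321}=0$ and use $u_{232},u_{121},u_{221}$ together with $u_{131},u_{231}$; both choices solve the same underdetermined system and the argument is otherwise identical.
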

\begin{proof}
Let $x\in Y^{\sst}_{\be_{11}}$. 
By the above consideration, we may assume that 
$x=(R(11),w)$ where $w=(x_{331},x_{232},x_{322},x_{332})$. 

We consider $n(u)$ in (\ref{eq:n(u)-defn}) such that
$u_{132}=u_{231}=u_{232}=u_{321}=0$. 
Let $n(u)x=(R(11),w')$. Then $w'=(x_{331}'\ccd x_{332}')$. 
where 
\begin{align*}
x_{331}' & = x_{331} + u_{132}, \quad
x_{232}' = x_{232} + u_{121}, \\
x_{322}' & = x_{322} + u_{132}+u_{221}, \quad
x_{332}' = x_{332} + u_{131}.
\end{align*}
So we can choose $u$ so that $w'=(0,0,0,0)$. 
\end{proof}
%


\vskip 5pt

(4) $\be_{22} = \tfrac {1} {12} (-1,-1,2,-1,-1,2,0,0)$.  

%

We identify the element 
$(\diag(g_1,t_1),\diag(g_2,t_2),g_3)\in M_{[2],[2],\emptyset}$
with the element $g=(g_1,g_2,g_3,t_1,t_2)\in \gl_2^3\times \gl_1^2$. 
On $M^1_{\be_{22}}$, 
%
\begin{equation*}
\chi_{\be_{22}}(g) = (\det g_1)^{-1}t_1^2(\det g_2)^{-1}t_2^2
= (t_1t_2)^3. 
\end{equation*}

For $x\in Z_{\be_{22}}$, let 
\begin{equation*}
A(x) = 
\begin{pmatrix}
x_{131} & x_{132} \\
x_{231} & x_{232}
\end{pmatrix}, \quad
B(x) = 
\begin{pmatrix}  
x_{311} & x_{312} \\
x_{321} & x_{322}
\end{pmatrix}. 
\end{equation*}
We identify $x$ with the pair  
$(A(x),B(x))\in \m_2\oplus \m_2$. Then the
action of $g\in M_{\be_{22}\, k}$ 
is $(A(gx),B(gx)) = (t_2g_1A(x){}^tg_3,t_1g_2B(x){}^tg_3)$.

Let $P(x)=(\det A(x))(\det B(x))$. Then on $M^1_{\be_{22}}$, 
\begin{equation*}
P(gx) = (t_1t_2)^2 (\det g_1)(\det g_2)(\det g_3)^2 P(x)
= (t_1t_2) P(x). 
\end{equation*}
So $P(x)$ is invariant under the action of $G_{\text{st},\be_{22}}$. 
This implies that 
\begin{equation*}
Z_{\be_{22}\,k}^{\sst} = \{x\in Z_{\be_{22}\,k}\mid \det A(x),\det B(x)\not=0\}. 
\end{equation*}
Since $(g_1,g_2,I_2,1,1)\cdot (I_2,I_2)=(g_1,g_2)$,  
$Z^{\sst}_{\be\,k}$ is a single $M_{\be \, k}$-orbit. 

Let $R(22)\in Z^{\sst}_{\be\,k}$ be the element such that 
$A(R(22))=B(R(22))=I_2$. 
\begin{prop}
\label{prop:orbit-beta22}
$Y^{\sst}_{\be_{22}\,k}=P_{\be_{22}\, k}R(22)$. 
\end{prop}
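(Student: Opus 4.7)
The proof will follow exactly the two-step template of Propositions \ref{prop:orbit-beta6} and \ref{prop:orbit-beta11}: first normalize the $Z_{\be_{22}}$-part of $x$ using the $M_{\be_{22}\,k}$-action, then kill the remaining $W_{\be_{22}}$-coordinates $x_{331}, x_{332}$ using a suitable element of $U_{\be_{22}\,k}$. The first step is essentially free: the Levi $M_{\be_{22}}$ commutes with $\lam_{\be_{22}}$, hence preserves the decomposition $Y_{\be_{22}} = Z_{\be_{22}} \oplus W_{\be_{22}}$, and the discussion just before the statement showed $Z^{\sst}_{\be_{22}\,k} = M_{\be_{22}\,k} R(22)$. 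Therefore any $x \in Y^{\sst}_{\be_{22}\,k}$ may be assumed of the form $x = R(22) + x_{331} e_{331} + x_{332} e_{332}$.

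For the second step I parametrize $U_{\be_{22}\,k}$ explicitly. Since $P_{\be_{22}} = P_{[2],[2],\emptyset}$ and $P_{3,\emptyset} = G_3$ has trivial unipotent radical, $U_{\be_{22}\,k}$ consists of elements $n(u) = (n_3(u_1), n_3(u_2), I_2)$ with $u_{1,21} = u_{2,21} = 0$, so the only free parameters are $u_{i,31}, u_{i,32}$ for $i = 1, 2$. Using the action formulas $\coorde_{i,1} \mapsto \coorde_{i,1} + u_{i,31}\coorde_{i,3}$, $\coorde_{i,2} \mapsto \coorde_{i,2} + u_{i,32}\coorde_{i,3}$, $\coorde_{i,3} \mapsto \coorde_{i,3}$, and noting that $R(22) = e_{131} + e_{232} + e_{311} + e_{322}$, a direct computation on each summand yields
\begin{equation*}
n(u) R(22) = R(22) + (u_{1,31} + u_{2,31}) e_{331} + (u_{1,32} + u_{2,32}) e_{332},
\end{equation*}
while $e_{331}$ and $e_{332}$ are fixed by $n(u)$.

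Consequently
\begin{equation*}
n(u) x = R(22) + (x_{331} + u_{1,31} + u_{2,31}) e_{331} + (x_{332} + u_{1,32} + u_{2,32}) e_{332},
\end{equation*}
and the equation $n(u) x = R(22)$ decouples into two independent scalar equations $u_{1,31} + u_{2,31} = -x_{331}$ and $u_{1,32} + u_{2,32} = -x_{332}$, solved for instance by $u_{1,31} = -x_{331}$, $u_{1,32} = -x_{332}$, $u_{2,31} = u_{2,32} = 0$. This gives $x \in P_{\be_{22}\,k} R(22)$. There is no real obstacle here: the case is strictly simpler than those of $\be_6$ and $\be_{11}$ because $W_{\be_{22}}$ has only two coordinates and the $U_{\be_{22}}$-action on them is linearly independent of anything already normalized; the only thing to watch is the unipotent constraint $u_{i,21} = 0$, which is trivially compatible with the choice of free parameters above.
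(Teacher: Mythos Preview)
Your proof is correct and follows essentially the same approach as the paper's: reduce the $Z_{\be_{22}}$-part to $R(22)$ via $M_{\be_{22}\,k}$, then kill $x_{331},x_{332}$ by a unipotent element $n(u)\in U_{\be_{22}\,k}$. The only cosmetic difference is which free parameters you use: you take $u_{1,31},u_{1,32}$ (setting $u_{2,31}=u_{2,32}=0$), whereas the paper restricts from the outset to $u_{1,32}=u_{2,31}=0$ and uses $u_{1,31},u_{2,32}$; both choices solve the two scalar equations and your more general formula $n(u)R(22)=R(22)+(u_{1,31}+u_{2,31})e_{331}+(u_{1,32}+u_{2,32})e_{332}$ is correct.
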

\begin{proof}
Let $x\in Y^{\sst}_{\be_{22}}$.  By the above consideration, 
we may assume that $A(x)=B(x)=I_2$. 
We put $x=(R(22),w)$ where $w=(x_{331},x_{332})$. 

Elements of $U_{\be\,k}$ are in the form   
$n(u)$ in (\ref{eq:n(u)-defn}) such that
$u_{121}=u_{221}=u_{321}=0$. We assume 
further that $u_{132}=u_{231}=0$. 
Let $n(u)(R(22),w)=(R(22),w')$. Then 
$w'=(x_{331}',x_{332}')$ where 
\begin{align*}
x_{331}' & = x_{331} + u_{131}, \quad
x_{332}' = x_{332} + u_{232}. 
\end{align*}
So we can choose $u$ so that $w'=(0,0)$. 
\end{proof}

\vskip 5pt

(5) $\be_{28} = \tfrac {1} {6} (-1,0,1,-1,0,1,-1,1)$. 
%
%

We express elements of $M_{[1,2],[1,2],[1]}$
as (\ref{eq:elementsofT}). 
On $M^1_{\be_{28}}$, 
%
\begin{equation*}
\chi_{\be_{28}}(t) = t_{11}^{-1}t_{13}t_{21}^{-1}t_{23}t_{31}^{-1}t_{32}
= t_{12}t_{13}^2t_{22}t_{23}^2t_{32}^2. 
\end{equation*}

Let $t e_{ijk}=\chi_{ijk}(t) e_{ijk}$ for $i,j=1,2,3,k=1,2$ 
as before. Then $\chi_{ijk}(t)$ for 
the coordinates of $Z_{\be_{28}}$ on $M_{\be_{28}}$ or 
$M^1_{\be_{28}}$ are as follows.

\vskip 10pt

\tiny

\begin{center}
 
\begin{tabular}{|c|c|c|c|c|}
\hline 
\text{character} & $\chi_{331}(t)$ & $\chi_{132}(t)$ 
& $\chi_{222}(t)$ & $\chi_{312}(t)$ \\
\hline
on $M_{\be_{28}}$ & $t_{13}t_{23}t_{31}$ & $t_{11}t_{23}t_{32}$ 
& $t_{12}t_{22}t_{32}$ & $t_{13}t_{21}t_{32}$  \\
\hline
on $M^1_{\be_{28}}$ & $t_{13}t_{23}t_{32}^{-1}$ & $t_{12}^{-1}t_{13}^{-1}t_{23}t_{32}$ 
& $t_{12}t_{22}t_{32}$ & $t_{13}t_{22}^{-1}t_{23}^{-1}t_{32}$ \\
\hline
\end{tabular}

\end{center} 

\normalsize

\vskip 10pt

We put $P(x)=(x_{331}x_{222})^2x_{132}x_{312}$. 
Then on $M^1_{\be_{28}}$, 
$P(tx)=t_{12}t_{13}^2t_{22}t_{23}^2t_{32}^2P(x)$. 
So $P(x)$ is invariant under the action of $G_{\text{st},\be_{28}}$. 
This implies that
\begin{equation*}
Z_{\be_{28}\,k}^{\sst} = \{x\in Z_{\be_{28}\,k}\mid 
x_{331},x_{132},x_{222},x_{312}\not=0\}. 
\end{equation*}

Let $q_1,q_2,q_3,q_4\in\mk$. If we put 
$t_{31}=q_1,t_{11}=q_2,t_{12}=q_3,t_{21}=q_4$. 
$t_{13}=t_{22}=t_{23}=t_{32}=1$. 
Then $\chi_{331}(t)=q_1,\chi_{132}(t)=q_2,\chi_{222}(t)=q_3,\chi_{312}(t)=q_4$. 
So $Z^{\sst}_{\be_{28}\,k}$ is a single $M_{\be_{28} \, k}$-orbit.

Let $R(28) =(1,1,1,1)\in Z^{\sst}_{\be_{28}\,k}$. 
\begin{prop}
\label{prop:orbit-beta28}
$Y^{\sst}_{\be_{28}\,k}=P_{\be_{28}\, k}R(28)$. 
\end{prop}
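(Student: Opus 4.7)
The plan is to follow the same template used in the preceding Propositions \ref{prop:orbit-beta6}, \ref{prop:orbit-beta11} and \ref{prop:orbit-beta22}: reduce an arbitrary $x\in Y^{\sst}_{\be_{28}\,k}$ to the standard representative $R(28)$ first by using the Levi action to bring the $Z$-part to $R(28)$, and then by using the unipotent radical action to kill the $W$-coordinates.

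First, since we have already shown that $Z^{\sst}_{\be_{28}\,k}$ is a single $M_{\be_{28}\,k}$-orbit with representative $R(28)$, any element of $Y^{\sst}_{\be_{28}\,k}$ can be brought, after acting by a suitable $t\in M_{\be_{28}\,k}$, into the form $x = R(28) + w$ with $w = x_{232}e_{232}+x_{322}e_{322}+x_{332}e_{332}\in W_{\be_{28}\,k}$. Next I would parametrize $U_{\be_{28}\,k}$: since $M_{\be_{28}}=M_{[1,2],[1,2],[1]}$ is a maximal torus, the weights $a_i-a_j$ for $i>j$ are strictly positive in each factor, and therefore every $n(u)=(n_3(u_1),n_3(u_2),n_2(u_3))$ as in \eqref{eq:n(u)-defn} lies in $U_{\be_{28}\,k}$ with no constraint on the parameters $u_{1ij},u_{2ij},u_{321}$.

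The computational step is then to expand $n(u)(R(28)+w)$ directly in coordinates. Because $U_{\be_{28}}$ strictly raises the $\be_{28}$-grading, the $Z_{\be_{28}}$-component of the result equals $R(28)$, and only the $W_{\be_{28}}$-coordinates are modified. A short calculation yields $n(u)(R(28)+w)=R(28)+w'$ with
\begin{align*}
x'_{232}&=x_{232}+u_{121}+u_{232}, \\
x'_{322}&=x_{322}+u_{132}+u_{221}, \\
x'_{332}&=x_{332}+u_{131}+u_{231}+u_{321}+(\text{quadratic terms in $u$ and $w$}).
\end{align*}
Setting $u_{121}=-x_{232}$, $u_{221}=-x_{322}$, $u_{131}=-x_{332}$ and all other $u$-parameters equal to zero makes $w'=0$, so $n(u)x=R(28)$. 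This gives $x\in P_{\be_{28}\,k}R(28)$.

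There is no real obstacle here; the argument is entirely mechanical, and the main thing to check carefully is that $U_{\be_{28}}$ acts on $Z_{\be_{28}}$ with image in $W_{\be_{28}}$ (so that the $Z$-coordinates of $R(28)$ are indeed preserved) and that the three $W$-coordinates can be independently zeroed with the available parameters, which is immediate from the triangular form of the system above.
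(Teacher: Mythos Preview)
Your proof is correct and follows essentially the same approach as the paper. The paper streamlines the computation by setting $u_{132}=u_{231}=u_{232}=u_{321}=0$ from the outset, which makes the quadratic terms in your $x'_{332}$ vanish and leaves exactly the linear system $x'_{232}=x_{232}+u_{121}$, $x'_{322}=x_{322}+u_{221}$, $x'_{332}=x_{332}+u_{131}$; your choice of parameters amounts to the same specialization made after the fact.
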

\begin{proof}
Let $x\in Y^{\sst}_{\be_{28}}$.  By the above consideration, 
we may assume that $x=(R(28),w)$  
where $w=(x_{232},x_{322},x_{332})$. 

Elements of $U_{\be_{28}\,k}$ are in the form   
$n(u)$ in (\ref{eq:n(u)-defn}). 
We assume that $u_{132}=u_{231}=u_{232}=u_{321}=0$. 
Let $n(u)(R(28),w)=(R(28),w')$. Then
$w'=(x_{232}',x_{322}',x_{332}')$ where
\begin{align*}
x_{232}' & = x_{232} + u_{121}, \quad
x_{322}' = x_{322} + u_{221}, \quad
x_{332}' = x_{332} + u_{131}.
\end{align*}
So we can choose $u$ so that $w'=(0,0,0)$. 
\end{proof}

\vskip 5pt

(6) $\be_{29} = \tfrac {1} {6} (-2,1,1,-2,1,1,0,0)$.  



We identify the element 
$(\diag(t_1,g_1),\diag(t_2,g_2),g_3)\in M_{[1],[1],\emptyset}$
with the element $g=(g_1,g_2,g_3,t_1,t_2)\in \gl_2^3\times \gl_1^2$. 
On $M^1_{\be_{29}}$, 
%
\begin{equation*}
\chi_{\be_{29}}(g) = (t_1t_2)^{-2}(\det g_1)(\det g_2)
= (\det g_1)^3(\det g_2)^3. 
\end{equation*}

The action of $M_{\be_{29}}\cong \gl_2^3\times \gl_1^2\ni (g_1,g_2,g_3,t_1,t_2)$
on $Z_{\be_{29}}\cong \aff^2\otimes \aff^2\otimes \aff^2$ is 
$v_1\otimes v_2\otimes v_3\mapsto g_1v_1\otimes g_2v_2\otimes g_3v_3$. 
Even though there are extra $\gl_1$-factors, 
this \rep{} is essentially the same as the ``$D_4$-case'' 
in \cite{wryu}. 
It is known that this is a  \pv{} and 
$M_{\be_{29}\, k}\backslash Z^{\sst}_{\be_{29}\,k}$
is in bijective correspondence with $\Ex_2(k)$. 
Note that this \rep{} is ``regular'' in the sense of 
Definition 2.1 \cite[p.310]{kato-yukie-jordan} 
by simple Lie algebra computations without any
assumption on $\ch(k)$. 
It allows us to use the usual cohomological 
considerations. 

There is a relative invariant polynomial $P(x)$ 
of degree $4$. 
If $g\in M^1_{\be_{29}}$ then 
\begin{equation*}
P(gx)=(\det g_1\det g_2 \det g_3)^2 P(x)
= (\det g_1\det g_2)^2 P(x). 
\end{equation*}
So $P(x)$ is invariant under the action of $G_{\text{st},\be_{29}}$. 
Since $W_{\be_{29}}=\{0\}$, 
\begin{math}
P_{\be_{29}\, k}\backslash Y^{\sst}_{\be_{29}\,k} 
\cong M_{\be_{29}\, k}\backslash Z^{\sst}_{\be_{29}\,k}
\cong \Ex_2(k). 
\end{math}
We do not provide the details here. 

\vskip 5pt

(7) $\be_{38} = \tfrac {1} {2} (0,0,0,0,0,0,-1,1)$.  



We identify the element 
$(g_1,g_2,\diag(t_{31},t_{32}))\in M_{\emptyset,\emptyset,[1]}$
with the element $g=(g_1,g_2,t_{31},t_{32})\in \gl_3^2\times \gl_1^2$. 
On $M^1_{\be_{38}}$, 
%
\begin{equation*}
\chi_{\be_{38}}(g) = t_{31}^{-1}t_{32} = t_{32}^2. 
\end{equation*}

For $x\in Z_{\be_{38}}$, let 
\begin{equation*}
A(x) = 
\begin{pmatrix}
x_{112} & x_{122} & x_{132} \\
x_{212} & x_{222} & x_{232} \\
x_{312} & x_{322} & x_{332}
\end{pmatrix}. 
\end{equation*}
We identify $Z_{\be_{38}}$ 
with $\m_3$ by the map $x\mapsto A(x)$. 
Then the action of $g$ on 
$Z_{\be_{38}}\cong \m_3$ is 
$\m_3\ni A\mapsto t_{32}g_1A{}^tg_2$. 
Let $P(x)=\det A(x)$. Then on $M^1_{\be_{38}}$, 
$P(gx)= t_{32}^3(\det g_1)(\det g_2)P(x)=t_{32}^3P(x)$. 
Therefore, $P(x)$ is invariant under the action of 
$G_{\text{st},\be_{38}}$. This implies that 
\begin{equation*}
Z_{\be_{38}\,k}^{\sst} = \{x\in Z_{\be_{38}\,k}\mid 
\det A(x)\not=0\}. 
\end{equation*}
It is easy to see that $Z^{\sst}_{\be_{38}\,k}$ is a single 
$M_{\be_{38} \, k}$-orbit. 

Let $R(38) \in Z^{\sst}_{\be_{38}\,k}$ be the element such that 
$A(R(38))=I_3$.  Since $W_{\be_{38}}=\{0\}$, 
$Y^{\sst}_{\be_{38}\,k}=Z^{\sst}_{\be_{38}\,k}=M_{\be_{38}\, k}R(38)=P_{\be_{38}\, k}R(38)$.

\vskip 5pt

(8) $\be_{39} = \tfrac {1} {6} (-2,0,2,-2,0,2,-1,1)$.  



We express elements of $M_{[1,2],[1,2],[1]}$
as (\ref{eq:elementsofT}). 
On $M^1_{\be_{39}}$, 
%
\begin{equation*}
\chi_{\be_{39}}(t) = t_{11}^{-2}t_{13}^2t_{21}^{-2}t_{23}^2t_{31}^{-1}t_{32}
= t_{12}^2t_{13}^4t_{22}^2t_{23}^4t_{32}^2.
\end{equation*}

Let $t e_{ijk}=\chi_{ijk}(t) e_{ijk}$ for $i,j=1,2,3,k=1,2$ 
as before. Then $\chi_{ijk}(t)$ for 
the coordinates of $Z_{\be_{39}}$ on $M_{\be_{39}}$ or 
$M^1_{\be_{39}}$ are as follows.

\vskip 10pt

\tiny

\begin{center}
 
\begin{tabular}{|c|c|c|c|}
\hline 
\text{character} & $\chi_{331}(t)$ & $\chi_{232}(t)$ 
& $\chi_{322}(t)$  \\
\hline
on $M_{\be_{39}}$ & $t_{13}t_{23}t_{31}$ & $t_{12}t_{23}t_{32}$ 
& $t_{13}t_{22}t_{32}$  \\
\hline
on $M^1_{\be_{39}}$ & $t_{13}t_{23}t_{32}^{-1}$ & $t_{12}t_{23}t_{32}$ 
& $t_{13}t_{22}t_{32}$ \\
\hline
\end{tabular}

\end{center} 

\normalsize

\vskip 10pt

We put $P(x)=(x_{331}x_{232}x_{322})^2$. Then 
$P(tx)=t_{12}^2t_{13}^4t_{22}^2t_{23}^4t_{32}^2P(x)$ for $t\in M^1_{\be_{39}}$. 
So $P(x)$ is invariant under the action of $G_{\text{st},\be_{39}}$. 
This implies that 
\begin{equation*}
Z_{\be_{39}\,k}^{\sst} = \{x\in Z_{\be_{39}\,k}\mid 
x_{331},x_{232},x_{322}\not=0\}. 
\end{equation*}

Let $q_1,q_2,q_3\in\mk$. If we put 
$t_{31}=q_1,t_{12}=q_2,t_{22}=q_3$, 
$t_{11}=t_{13}=t_{21}=t_{23}=t_{32}=1$. 
Then $\chi_{331}(t)=q_1,\chi_{232}(t)=q_2,\chi_{322}(t)=q_3$. 
So $Z^{\sst}_{\be_{39}\,k}$ is a single $M_{\be \, k}$-orbit.

Let $R(39)=(1,1,1)\in Z^{\sst}_{\be_{39}\,k}$. 
\begin{prop}
\label{prop:orbit-beta39}
$Y^{\sst}_{\be_{39}\,k}=P_{\be_{39}\, k}R(39)$. 
\end{prop}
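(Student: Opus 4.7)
The plan is to follow the two-step pattern used in the preceding propositions of this section (e.g., those for $\be_6$, $\be_{22}$, $\be_{28}$). Given $x \in Y^{\sst}_{\be_{39}\,k}$, the first step is to use an element of $M_{\be_{39}\,k}$ to normalize the $Z_{\be_{39}}$-component of $x$ to $R(39)$, and the second step is to use an element of $U_{\be_{39}\,k}$ to kill the $W_{\be_{39}}$-component while preserving $R(39)$.

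The first step is immediate from the discussion just above the proposition, which shows that $Z^{\sst}_{\be_{39}\,k}$ is a single $M_{\be_{39}\,k}$-orbit containing $R(39)$. Since the $M_{\be_{39}}$-action preserves the decomposition $Y_{\be_{39}} = Z_{\be_{39}} \oplus W_{\be_{39}}$, we may assume $x = (R(39), w)$, where $w$ is encoded by the single scalar $x_{332}$.

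For the second step, write a general element of $U_{\be_{39}\,k}$ as $n(u)$ from (\ref{eq:n(u)-defn}) and compute its action on the basis vectors $e_{331}, e_{232}, e_{322}$ using the explicit lower-triangular form of $n_3(u_1), n_3(u_2), n_2(u_3)$. A direct calculation gives $e_{331} \mapsto e_{331} + u_{321} e_{332}$, $e_{232} \mapsto e_{232} + u_{132} e_{332}$, and $e_{322} \mapsto e_{322} + u_{232} e_{332}$, while $e_{332}$ is fixed. Consequently $n(u)$ preserves the $Z$-part of $x$ and only modifies the $e_{332}$-coefficient, replacing $x_{332}$ by $x_{332} + u_{321} + u_{132} + u_{232}$. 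Choosing any $u$ with $u_{321} + u_{132} + u_{232} = -x_{332}$ yields the desired $n(u) x = R(39)$, so $x \in P_{\be_{39}\,k} R(39)$. The main substantive work had already been done in the semistability analysis and the single-orbit statement for $Z^{\sst}_{\be_{39}\,k}$; the remaining unipotent cleanup reduces to one scalar linear equation in three unknowns and poses no real obstacle.
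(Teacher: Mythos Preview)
Your proof is correct and follows essentially the same approach as the paper: reduce to $(R(39),x_{332})$ via the single $M_{\be_{39}\,k}$-orbit on $Z^{\sst}_{\be_{39}\,k}$, then kill $x_{332}$ by an element of $U_{\be_{39}\,k}$. The only cosmetic difference is that the paper sets all $u_{ijk}$ to zero except $u_{132}$ and uses $x_{332}'=x_{332}+u_{132}$, whereas you keep all three relevant parameters and solve $u_{321}+u_{132}+u_{232}=-x_{332}$; the idea is identical.
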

\begin{proof}
Let $x\in Y^{\sst}_{\be_{39}}$.  By the above consideration, 
we may assume that $x$ is in the form 
$(R(39),w)$ where $w=(x_{332})$. 

Elements of $U_{\be_{39}\,k}$ are in the form   
$n(u)$ in (\ref{eq:n(u)-defn}). 
We assume that $u_{ijk}=0$ unless $(i,j,k)=(1,3,2)$.  
Let $n(u)(R(39),x_{332})=(R(39),x_{332}')$. Then
\begin{align*}
x_{332}' & = x_{332} + u_{132} 
\end{align*}
So we can choose $u$ so that $x_{332}'=0$. 
\end{proof}

\vskip 5pt

(9) $\be_{40} = \tfrac {1} {30} (-10,2,8,-4,2,2,-3,3)$, 
$\be_{41} = \tfrac {1} {30} (-4,2,2,-10,2,8,-3,3)$. 
We only consider $\be_{40}$.  



We identify the element 
$(\diag(t_{11},t_{12},t_{13}),\diag(t_2,g_2),\diag(t_{31},t_{32}))
\in M_{[1,2],[1],[1]}$ with the element 
$g=(g_2,t_{11},t_{12},t_{13},t_2,t_{31},t_{32})\in \gl_2\times \gl_1^6$. 
On $M^1_{\be_{40}}$, 
%
\begin{equation*}
\chi_{\be_{40}}(g) 
= t_{11}^{-10}t_{12}^2t_{13}^8t_2^{-4}(\det g_2)^2 t_{31}^{-3}t_{32}^3
= t_{12}^{12}t_{13}^{18}(\det g_2)^6 t_{32}^6.
\end{equation*}

For $x\in Z_{\be_{40}}$, we put  
\begin{equation*}
A(x) = \begin{pmatrix}
x_{321} & x_{222} \\ 
x_{331} & x_{232} 
\end{pmatrix}. 
\end{equation*}
Then $(\Lam^{2,1}_{2,[2,3]})^{2\oplus}$ can be identified with 
$\m_2$ by the map $(x_{321},x_{331},x_{222},x_{232})\mapsto A(x)$. 
We express $x$ as $x=(A(x),x_{312})$.
Let $a(t) = \diag(t_{13}t_{31},t_{12}t_{32})$. 
The action of $g$ as above on $Z_{\be_{40}}$ is 
$(A(x),x_{312})\mapsto (g_2A(x)a(t),t_{13}t_2t_{32} x_{312})$. 

Let $P_1(x)=\det A(x)$. Then on $M^1_{\be_{40}}$, 
\begin{equation*}
P_1(gx)=(\det g_2)t_{12}t_{13}t_{31}t_{32}P_1(x)=t_{12}t_{13}(\det g_2)P_1(x).
\end{equation*}
We put $P(x)= P_1(x)^2x_{312}$. Then 
\begin{equation*}
P(gx)=t_{12}^2t_{13}^2(\det g_2)^2t_{13}t_2t_{32}P(x)
= t_{12}^2t_{13}^3(\det g_2)t_{32}P(x).
\end{equation*}
So $P(x)$ is invariant under the action of $G_{\text{st},\be_{40}}$. 
This implies that 
\begin{equation*}
Z^{\sst}_{\be_{40}\, k}=\{(A(x),x_{312})\mid \det A(x),x_{312}\not=0\}. 
\end{equation*}

By the action of an element of the form  $g = (g_2,1,1,1,t_2,1,1)$, 
$(A(x),x_{312})$ maps to $(g_1A(x),t_2x_{312})$. 
This implies that $Z^{\sst}_{\be_{40}\, k}$ is a single 
$M_{\be_{40}\, k}$-orbit. 

Let $R(40) =(I_2,1)\in Z^{\sst}_{\be_{40}\,k}$. 
\begin{prop}
\label{prop:orbit-beta40}
$Y^{\sst}_{\be_{40}\,k}=P_{\be_{40}\, k}R(40)$. 
\end{prop}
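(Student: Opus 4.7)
The plan is to repeat the reduction strategy already used in Propositions \ref{prop:orbit-beta6}, \ref{prop:orbit-beta11}, \ref{prop:orbit-beta22}, \ref{prop:orbit-beta28} and \ref{prop:orbit-beta39}. For $x\in Y^{\sst}_{\be_{40}\,k}$, I first invoke the fact established just above (namely, that $Z^{\sst}_{\be_{40}\,k}$ is a single $M_{\be_{40}\,k}$-orbit) to reduce, by an $M_{\be_{40}\,k}$-action, to the case $x=(R(40),w)$ with $w=x_{322}e_{322}+x_{332}e_{332}\in W_{\be_{40}\,k}$. The remaining task is to eliminate $w$ by a suitable element of $U_{\be_{40}\,k}$ while keeping the $Z_{\be_{40}}$-component equal to $R(40)$.

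The next step is to describe $U_{\be_{40}\,k}$ explicitly. Since $M_{\be_{40}}=M_{[1,2],[1],[1]}$, the first $\gl_3$-factor contributes a Borel, the second $\gl_3$-factor with block sizes $(1,2)$ imposes $u_{232}=0$, and the $\gl_2$-factor contributes a Borel. Hence $U_{\be_{40}\,k}$ consists of $n(u)$ as in (\ref{eq:n(u)-defn}) subject only to $u_{232}=0$. Writing $R(40)=e_{321}+e_{232}+e_{312}$ and applying the standard formulas for the action of lower unipotent matrices on the coordinate basis, I compute
\begin{equation*}
n(u)e_{321}=e_{321}+u_{321}e_{322},\quad
n(u)e_{232}=e_{232}+u_{132}e_{332},\quad
n(u)e_{312}=e_{312}+u_{221}e_{322}+u_{231}e_{332}.
\end{equation*}
Each correction lies in $W_{\be_{40}}$, so $n(u)$ automatically fixes the $Z_{\be_{40}}$-component of $R(40)$ for every $u$ with $u_{232}=0$; no auxiliary restriction of the sort needed in the earlier propositions is required here.

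Since $n(u)$ also fixes $e_{322}$ and $e_{332}$, it follows that $n(u)(R(40),w)=(R(40),w')$ with
\begin{align*}
x_{322}' &= x_{322}+u_{321}+u_{221},\\
x_{332}' &= x_{332}+u_{132}+u_{231}.
\end{align*}
Taking $u_{321}=-x_{322}$, $u_{132}=-x_{332}$, and the remaining components of $u$ equal to zero gives $w'=0$, which yields $Y^{\sst}_{\be_{40}\,k}=P_{\be_{40}\,k}R(40)$. The only point that takes any care is verifying that each basis vector appearing in $R(40)$ is pushed by $n(u)$ into $R(40)+W_{\be_{40}}$, and this is a purely mechanical computation on the bases of the $\aff^3$- and $\aff^2$-factors.
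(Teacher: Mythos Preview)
Your proof is correct and follows the same strategy as the paper: reduce to $(R(40),w)$ via the $M_{\be_{40}\,k}$-action, then kill $w$ with a suitable $n(u)\in U_{\be_{40}\,k}$. The only difference is cosmetic---the paper restricts from the outset to $u$ with $u_{ijk}=0$ unless $(i,j,k)\in\{(2,2,1),(2,3,1)\}$ and obtains $x_{322}'=x_{322}+u_{221}$, $x_{332}'=x_{332}+u_{231}$, whereas you allow all of $U_{\be_{40}}$ and use $u_{321},u_{132}$ instead; either choice works.
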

\begin{proof}
Let $x\in Y^{\sst}_{\be_{40}}$.  By the above consideration, 
we may assume that $x=(R(40),w)$  
where $w=(x_{322},x_{332})$. 

Elements of $U_{\be_{40}\,k}$ are in the form   
$n(u)$ in (\ref{eq:n(u)-defn}) where $u_{232}=0$. 
 We assume further that $u_{ijk}=0$ unless 
$(i,j,k)=(2,2,1),(2,3,1)$.  
Let $n(u)(R(40),w)=(R(40),w')$. Then
$w'=(x_{322}',x_{332}')$ where 
\begin{align*}
x_{322}' & = x_{322} + u_{221}, \quad
x_{332}' = x_{332} + u_{231}.  
\end{align*}
So we can choose $u$ so that $w'=(0,0)$. 
\end{proof}

\vskip 5pt





\vskip 5pt

(10) $\be_{42} = \tfrac {1} {30} (-1,-1,2,-1,-1,2,-3,3)$.  



We identify the element 
$(\diag(g_1,t_1),\diag(g_2,t_2),\diag(t_{31},t_{32}))
\in M_{[2],[2],[1]}$ with the element 
$g=(g_1,g_2,t_1,t_2,t_{31},t_{32})\in \gl_2^2\times \gl_1^4$. 
On $M^1_{\be_{42}}$, 
%
\begin{equation*}
\chi_{\be_{42}}(g) = ((\det g_1)(\det g_2))^{-1}(t_1t_2)^2 t_{31}^{-3}t_{32}^3
= (t_1t_2)^3t_{32}^6. 
\end{equation*}

For $x\in Z_{\be_{42}}$, we put  
\begin{equation*}
A(x) = \begin{pmatrix}
x_{112} & x_{122} \\ 
x_{212} & x_{222} 
\end{pmatrix}. 
\end{equation*}
We express elements of $Z_{\be_{42}}$ as $x=(x_{331},A(x))$.  
Then $gx = (t_1t_2t_{31}x_{331},t_{32}g_1 A(x){}^tg_2)$. Let
$P_1(x)=\det A(x)$ and $P(x)=x_{331}^4 P_1(x)^3$. 
Then 
\begin{equation*}
P_1(gx) = t_{32}^2(\det g_1)(\det g_2)P_1(x)=t_{32}^2(t_1t_2)^{-1}P_1(x)
\end{equation*}
and $P(gx) = t_1t_2t_{32}^2P(x)$ for $g\in M^1_{\be_{42}}$. 
So $P(x)$ is invariant under the action of $G_{\text{st},\be_{40}}$. 
This implies that 
\begin{equation*}
Z^{\sst}_{\be_{42}\, k}=\{(x_{331},A(x))\mid x_{331},\det A(x)\not=0\}. 
\end{equation*}

It is easy to see that $Z^{\sst}_{\be_{42}\, k}$ 
is a single $M_{\be_{42}\, k}$-orbit. 
Let $R(42)\in Z^{\sst}_{\be_{42}\,k}$ be the element 
such that $x_{331}=1,A(R(42))=I_2$. 

\begin{prop}
\label{prop:orbit-beta42}
$Y^{\sst}_{\be_{42}\,k}=P_{\be_{42}\, k}R(42)$. 
\end{prop}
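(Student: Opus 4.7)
The plan is to follow the template of the preceding propositions: first use $M_{\be_{42}\,k}$ to reduce the $Z_{\be_{42}}$-component of $x$ to $R(42)$, then use the unipotent radical $U_{\be_{42}\,k}$ to eliminate the $W_{\be_{42}}$-component. The first reduction is immediate from the paragraph preceding the statement, where it was shown that $Z^{\sst}_{\be_{42}\,k}$ is a single $M_{\be_{42}\,k}$-orbit, so I may assume $x=(R(42),w)$ with $w=(x_{132},x_{232},x_{312},x_{322},x_{332})\in W_{\be_{42}\,k}$. Since $M_{\be_{42}}=M_{[2],[2],[1]}$, an element $n(u)\in U_{\be_{42}\,k}$ of the form (\ref{eq:n(u)-defn}) is characterized by $u_{121}=u_{221}=0$; the five free parameters $u_{131},u_{132},u_{231},u_{232},u_{321}$ match $\dim W_{\be_{42}}=5$.

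I would then compute $n(u)\cdot(R(42),w)=(R(42),w')$. The $Z$-component is preserved because $n(u)$ sends each of $e_{331},e_{112},e_{222}$ into itself plus terms lying in $W_{\be_{42}}$; expanding, for instance, $n(u)\cdot e_{112}=(\coorde_{1,1}+u_{131}\coorde_{1,3})\otimes(\coorde_{2,1}+u_{231}\coorde_{2,3})\otimes \coordf_2$ and analogously for $e_{222}$ and $e_{331}$, I expect
\begin{align*}
x_{132}'&=x_{132}+u_{231}, & x_{232}'&=x_{232}+u_{232}, \\
x_{312}'&=x_{312}+u_{131}, & x_{322}'&=x_{322}+u_{132}, \\
x_{332}'&=x_{332}+u_{321}+Q(u,w),
\end{align*}
where $Q(u,w)$ is a bilinear expression in the $u$-parameters and $w$-coordinates (collecting cross-terms such as $u_{131}u_{231}$ and $x_{132}u_{131}$), crucially independent of $u_{321}$.

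To complete the argument I would act in two stages: first set $u_{321}=0$ and $u_{231}=-x_{132}$, $u_{232}=-x_{232}$, $u_{131}=-x_{312}$, $u_{132}=-x_{322}$, which zeroes the first four coordinates of $w'$ and shifts $x_{332}$ to an explicit value $\tilde x_{332}$; then apply a second element of $U_{\be_{42}\,k}$ with only $u_{321}'$ nonzero, which (since $Q$ is free of $u_{321}$) translates $x_{332}$ alone, and set $u_{321}'=-\tilde x_{332}$. The main subtlety is that the combined system $w'=0$ is genuinely quadratic in the $u$'s; it is the triangular structure---$u_{321}$ appearing linearly in $x_{332}'$ and in no other coordinate, while the other four $u$'s each govern a distinct linear coordinate---that makes the sequential solution succeed.
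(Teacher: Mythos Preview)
Your proposal is correct and follows essentially the same route as the paper's proof. Both arguments reduce to $x=(R(42),w)$, observe that $U_{\be_{42}}$ is parametrized by $u_{131},u_{132},u_{231},u_{232},u_{321}$, and then proceed in two stages: first use the four parameters $u_{131},u_{132},u_{231},u_{232}$ to kill $x_{132},x_{232},x_{312},x_{322}$ (each appearing linearly in exactly one of these), and afterwards use $u_{321}$---which enters only $x_{332}'$ and does so linearly---to kill the remaining coordinate. Your explicit identification of the quadratic cross-term $Q(u,w)$ in $x_{332}'$ is a welcome clarification; the paper simply restarts with $w$ having its first four entries zero and all $u$'s except $u_{321}$ set to zero, at which point $Q$ vanishes and the formula $x_{332}'=x_{332}+u_{321}$ is immediate.
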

\begin{proof}
Let $x\in Y^{\sst}_{\be_{42}}$. 
By the above consideration, we may assume that 
$x=(R(42),w)$ where $w=(x_{132},x_{232},x_{312},x_{322},x_{332})$. 

Elements of $U_{\be_{42}}$ are in the form 
$n(u)$ in (\ref{eq:n(u)-defn}) where $u_{121}=u_{221}=0$. 
Let $n(u)x=(R(42),w')$. Then $w'=(x_{132}'\ccd x_{332}')$
where 
\begin{align*}
x_{132}' & = x_{132} + u_{231}, \quad
x_{232}' = x_{232} + u_{232}, \\
x_{312}' & = x_{312} + u_{131}, \quad
x_{322}' = x_{322} + u_{132}.
\end{align*}
By $u_{131},u_{132},u_{231},u_{232}$, we can make
$x'_{132},x'_{232},x'_{312},x'_{322}=0$. Then assuming that
$x_{132},x_{232},x_{312},x_{322}=0$ and
$u_{131},u_{132},u_{231},u_{232}=0$,
$x'_{332}=x_{332}+u_{321}$. Therefore, we can make $w=0$.
\end{proof}
%


\vskip 5pt

(11) $\be_{45} = \tfrac {1} {6} (-2,1,1,-2,1,1,-3,3)$.  



We identify the element 
$(\diag(t_1,g_1),\diag(t_2,g_2),\diag(t_{31},t_{32}))
\in M_{[1],[1],[1]}$ with the element 
$g=(g_1,g_2,t_1,t_2,t_{31},t_{32})\in \gl_2^2\times \gl_1^4$. 
On $M^1_{\be_{45}}$, 
%
\begin{equation*}
\chi_{\be_{45}}(g) = (t_1t_2)^{-2}(\det g_1)(\det g_2)t_{31}^{-3}t_{32}^3
= ((\det g_1)(\det g_2))^3t_{32}^6. 
\end{equation*}

For $x\in Z_{\be_{45}}$, we put  
\begin{equation*}
A(x) = \begin{pmatrix}
x_{222} & x_{232} \\
x_{322} & x_{332}
\end{pmatrix}. 
\end{equation*}
Then $Z_{\be_{45}}$ can be identified with 
$\m_2$ by the map $x\mapsto A(x)$. 
Moreover, $A(gx) = t_{32}g_1A(x){}^tg_2$. 
So if we put $P(x)=\det A(x)$ then 
$P(gx)=(\det g_1)(\det g_2)t_{32}^2$. 
Therefore, $P(x)$ is invariant under the action of $G_{\text{st},\be_{45}}$. 
This implies that 
\begin{equation*}
Z^{\sst}_{\be_{45}\, k}=\{x\in Z_{\be_{45}\,k}\mid \det A(x)\not=0\}. 
\end{equation*}
It is easy to see that $Z^{\sst}_{\be_{45}\, k}$ 
is a single $M_{\be_{45}\, k}$-orbit. 
Let $R(45)\in Z^{\sst}_{\be_{45}\,k}$ be the element 
such that $A(x)=I_2$. 
Since $W_{\be_{45}}=\{0\}$, 
\begin{math}
Y^{\sst}_{\be_{45}\,k}=Z^{\sst}_{\be_{45}\,k}
=M_{\be_{45}\, k}R(45)=P_{\be_{45}\, k}R(45).    
\end{math}

\vskip 5pt

(12) $\be_{47} = \tfrac {1} {6} (-2,-2,4,-2,1,1,0,0)$, 
$\be_{48} = \tfrac {1} {6} (-2,1,1,-2,-2,4,0,0)$.

 


These cases are similar to the case (11). 






\vskip 5pt

(13) $\be_{49} = \tfrac {1} {6} (-2,-2,4,-2,-2,4,-3,3)$.  



We identify the element 
$(\diag(g_1,t_1),\diag(g_2,t_2),\diag(t_{31},t_{32}))
\in M_{[2],[2],[1]}$ with the element 
$g=(g_1,g_2,t_1,t_2,t_{31},t_{32})\in \gl_2^2\times \gl_1^4$. 
On $M^1_{\be_{49}}$, 
%
\begin{equation*}
\chi_{\be_{49}}(g) = ((\det g_1)(\det g_2))^{-2}(t_1t_2)^4t_{31}^{-3}t_{32}^3
= (t_1t_2)^6t_{32}^6. 
\end{equation*}

For $x\in Z_{\be_{49}}$, we put  
$P(x)=x_{332}$. Then 
$P(gx)=t_1t_2t_{32}P(x)$. 
So $P(x)$ is invariant under the action of $G_{\text{st},\be_{49}}$. 
This implies that $Z^{\sst}_{\be_{49}\,k}=\{x=(x_{332})\in k\mid x_{332}\not=0\}$.
This is clearly the orbit of $x$ such that $x_{332}=1$. 
Since $W_{\be_{49}}=\{0\}$, 
\begin{math}
Y^{\sst}_{\be_{49}\,k}
\end{math}
is also a single $P_{\be_{49}\, k}$-orbit. 

We can now state the main theorem for the case (1) 
more precisely as follows.

\begin{thm}
\label{thm:main1-detail}
\begin{itemize}
\item[(1)]
For the \pv{} (1), $S_{\be_i}\not=\empty$ 
if and only if $i$ is one of the numbers in (\ref{eq:list-non-empty}).
\item[(2)]
For $i$ in (\ref{eq:list-non-empty}), 
$S_{\be_i\,k}$ is a single $G_k$-orbit 
except for $i=29$.  
\item[(3)]
$G_k\backslash S_{\be_{29}\,k}$ 
is in bijective correspondence with $\Ex_2(k)$. 
\end{itemize}
\end{thm}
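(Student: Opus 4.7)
The plan is to assemble Theorem \ref{thm:main1-detail} from the thirteen case analyses already carried out above together with the emptiness results promised in the next section, using the symmetry $\be \leftrightarrow \sig(\be)$ to avoid duplication. For the ``if'' direction of part (1), each case (1)--(13) above produces an explicit element $R(i) \in Z^{\sst}_{\be_i,k}$, and together with the identity $Y^{\sst}_{\be\,k} = U_{\be\,k} Z^{\sst}_{\be\,k}$ and the bundle description $S_{\be_i} \cong G_k \times_{P_{\be_i\,k}} Y^{\sst}_{\be_i\,k}$, this immediately yields $S_{\be_i} \neq \emptyset$. The cases $i = 5, 41, 48$ follow from $i = 4, 40, 47$ by the $\sig$-symmetry observed at the start of Section \ref{sec:non-empty-strata1}.

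For part (2), I would read off the conclusion from the propositions already proved. For each $i$ in (\ref{eq:list-non-empty}) with $i \neq 29$, either Propositions \ref{prop:orbit-beta6}, \ref{prop:orbit-beta11}, \ref{prop:orbit-beta22}, \ref{prop:orbit-beta28}, \ref{prop:orbit-beta39}, \ref{prop:orbit-beta40}, \ref{prop:orbit-beta42} explicitly assert $Y^{\sst}_{\be_i\,k} = P_{\be_i\,k} R(i)$, or (in the cases $i = 4, 38, 45, 47, 49$ where $W_{\be_i} = \{0\}$) the single-$M_{\be_i\,k}$-orbit statement for $Z^{\sst}_{\be_i\,k}$ already yields the same conclusion via $Y^{\sst}_{\be_i\,k} = Z^{\sst}_{\be_i\,k}$. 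Combined with the bijection $G_k\backslash S_{\be_i\,k} \cong P_{\be_i\,k}\backslash Y^{\sst}_{\be_i\,k}$, this shows $S_{\be_i\,k}$ consists of a single $G_k$-orbit. The $\sig$-symmetry transports these single-orbit statements to the remaining indices.

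For part (3), case (6) of the section identified $Z_{\be_{29}}$ with the ``$D_4$-case'' of \cite{wryu} (regularity allowing the standard cohomological treatment, without restriction on $\ch(k)$), for which it is known that $M_{\be_{29}\,k}\backslash Z^{\sst}_{\be_{29}\,k}$ is in bijective correspondence with $\Ex_2(k)$. Since $W_{\be_{29}} = \{0\}$ and hence $Y^{\sst}_{\be_{29}\,k} = Z^{\sst}_{\be_{29}\,k}$, the chain of identifications $G_k\backslash S_{\be_{29}\,k} \cong P_{\be_{29}\,k}\backslash Y^{\sst}_{\be_{29}\,k} \cong M_{\be_{29}\,k}\backslash Z^{\sst}_{\be_{29}\,k}$ delivers the claim.

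The actual obstacle — and the only non-bookkeeping work remaining — is the ``only if'' direction of part (1), namely showing that $S_{\be_i} = \emptyset$ for the other $\be_i \in \gB$. This requires, for each such stratum, ruling out the existence of any semistable point in $Z_{\be_i}$ with respect to the $G_{\text{st},\be_i}$-action, typically by exhibiting a one-parameter subgroup of $G_{\text{st},\be_i}$ whose associated weight on $Z_{\be_i}$ is strictly larger than $\be_i$ in the appropriate ordering, contradicting the defining property of the Kempf stratum $\be_i$. This will be carried out systematically in the next section and I would invoke it here as a black box.
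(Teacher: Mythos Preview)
Your proposal is correct and mirrors the paper's own treatment: the paper likewise states that the case analyses of Section~\ref{sec:non-empty-strata1} (together with the $\sig$-symmetry and the identifications $G_k\backslash S_{\be_i\,k}\cong P_{\be_i\,k}\backslash Y^{\sst}_{\be_i\,k}\cong M_{\be_i\,k}\backslash Z^{\sst}_{\be_i\,k}$) establish everything except the ``only if'' part of (1), which is deferred to Section~\ref{sec:empty-strata1}. Your one-sentence sketch of the emptiness argument is slightly imprecise---the paper's method is to use $M^s_{\be_i}$ to zero out certain coordinates and then exhibit a 1PS of $G_{\text{st},\be_i}$ with strictly positive weights on the remaining ones, thereby showing $Z^{\sst}_{\be_i}=\emptyset$ via Hilbert--Mumford---but since you invoke that section as a black box this does not affect the validity of your proof.
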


Considerations of this section proves the 
above theorem except for 
the ``only if'' part of (1). 
We shall prove the ``only if'' part of (1) 
in the next section.

\section{Empty strata for the case (1)}
\label{sec:empty-strata1}

In this section we prove that $S_{\be_i}=\emptyset$ 
for $i$ not in (\ref{eq:list-non-empty}) for 
the \pv{} (1).

The following lemmas are easy and we do not provide the proof. 

\begin{lem}
\label{lem:eliminate-standard}
Let $G=\spl_n$ and $V=\aff^n$ (the standard \rep). 
Then for any $x\in V_k$ there exists $g\in G_k$ such that 
$gx$ is in the form $[0\ccd 0,*]$. 
\end{lem}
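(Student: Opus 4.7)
The plan is to reduce everything to the well-known transitivity of $\spl_n(k)$ on $V_k\setminus\{0\}$. First I would dispose of the trivial cases: if $x=0$ then $x$ is already in the required form, and if $n=1$ then the ``$0\ccd 0$'' prefix is empty and the statement is vacuous, so from here on I may assume $n\geq 2$ and $x\neq 0$.

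Next I would explicitly exhibit an element $g\in\spl_n(k)$ with $gx=\coorde_n=[0\ccd 0,1]$. Since $x\neq 0$, I can extend $x$ to a $k$-basis $v_1\ccd v_{n-1},x$ of $k^n$. Let $h\in\gl_n(k)$ be the matrix whose columns are $v_1\ccd v_{n-1},x$, so that $h\coorde_n=x$, and set $d=\det h\in\mk$. Rescaling the first column by $d^{-1}$, i.e.\ replacing $v_1$ by $d^{-1}v_1$, produces a new matrix $h'\in\spl_n(k)$ which still satisfies $h'\coorde_n=x$. Then $g=h'^{-1}\in\spl_n(k)$ does the job.

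There is no real obstacle here; the only thing to be careful about is to preserve the relation sending $\coorde_n$ to $x$ while adjusting the determinant, which is why I rescale one of the first $n-1$ columns rather than the last. This is exactly where the hypothesis $n\geq 2$ is used. The statement even allows the last coordinate of $gx$ to vanish, so there is no further case analysis needed.
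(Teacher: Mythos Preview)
Your argument is correct. The paper itself does not give a proof of this lemma, stating only that it is easy; your reduction to the transitivity of $\spl_n(k)$ on nonzero vectors via extending $x$ to a basis and adjusting the determinant on one of the first $n-1$ columns is exactly the standard justification one would supply.
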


\begin{lem}
\label{lem:eliminate-nm-matrix}
Let $n>m>0$, $G=\spl_n$ and $V=\m_{n,m}$.  
Then for any $A\in V_k$ there exists $g\in G_k$ such that 
if $B=(b_{ij})=gA$ then $b_{ij}=0$ for $i=1\ccd n-m,j=1\ccd m$. 
\end{lem}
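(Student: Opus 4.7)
The plan is to reformulate the statement in terms of column spaces. Left multiplication by $g \in \spl_n$ transforms $A \in \m_{n,m}$ so that the column space of $gA$ is the image under $g$ of the column space of $A$. Writing $W \subseteq k^n$ for the column space of $A$, the desired conclusion $b_{ij}=0$ for $i=1\ccd n-m$, $j=1\ccd m$ amounts to requiring
\begin{equation*}
g(W) \;\subseteq\; V_0 \;:=\; \{\, x \in k^n \mid x_1 = \cdots = x_{n-m} = 0 \,\},
\end{equation*}
a subspace of dimension $m$.

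First I would construct some $g_0 \in \gl_n(k)$ sending $W$ into $V_0$. This is pure linear algebra: since $\dim W \leq m = \dim V_0$, choose a basis of $W$, extend it to a basis of $k^n$, and define $g_0$ by sending the basis of $W$ into a basis of $V_0$ and the complementary vectors to any completion to a basis of $k^n$. Then $g_0 \in \gl_n(k)$ and $g_0(W) \subseteq V_0$.

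Finally I would adjust $g_0$ so that its determinant is $1$. Put $d = \det g_0 \in \mk$ and set $g = \diag(d^{-1}, 1, \ldots, 1)\, g_0$. Since $n-m \geq 1$, the diagonal factor scales only the first coordinate, which is already zero on every element of $V_0$; hence it fixes $V_0$ pointwise-in-coordinate, so $g(W) \subseteq V_0$ still holds. By construction $\det g = 1$, so $g \in \spl_n(k)$ and $gA$ has the required form.

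The only potential obstacle is the final adjustment to $\spl_n$, and it is handled precisely because $n > m$ guarantees at least one coordinate (the first) lying outside the support of $V_0$, which can be rescaled freely without disturbing the containment $g(W) \subseteq V_0$.
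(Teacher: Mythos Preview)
Your proof is correct. The paper does not give a proof of this lemma, declaring it (along with the neighboring lemmas) easy; your argument via column spaces and the determinant adjustment exploiting $n>m$ is exactly the standard justification one would supply.
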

\begin{lem}
\label{lem:eliminate-2m(2)}
Let $G=\spl_2\times \spl_2$ and $V=\aff^2\oplus \m_2$
where $(g_1,g_2)\in G$ acts on $V$ by $V\ni (v,A)\mapsto (g_1v,g_1A{}^tg_2)$.
Then for any element $(v,A)\in V_k$, there exists $g\in G_k$ such that 
if $g(v,A)=(w,B)$ then the first entry of $w$ and the $(1,1)$-entry of $B$ 
are $0$.  
\end{lem}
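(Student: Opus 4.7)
The plan is to exploit the product structure of $G = \spl_2 \times \spl_2$: the first factor $g_1$ acts on both $v$ and $A$ (the latter from the left), while the second factor $g_2$ acts only on $A$ (from the right). This suggests a two-step normalization: first use $g_1$ to kill the first entry of $v$, then use the entirely unused $g_2$ to kill the $(1,1)$-entry of $g_1 A \, {}^t g_2$.

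First I would dispose of the $v$-normalization. If $v = 0$ there is nothing to do; otherwise Lemma \ref{lem:eliminate-standard} (applied with $n = 2$) provides $g_1 \in \spl_2(k)$ such that $g_1 v = [0,*]^t$. Note that this first step places no constraint on $g_2$, which remains entirely free for the second step.

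Next, write $A' = g_1 A$ and observe that the $(1,1)$-entry of $g_1 A \, {}^t g_2 = A' \, {}^t g_2$ is
\begin{equation*}
(A' \, {}^t g_2)_{11} = A'_{11} (g_2)_{11} + A'_{12} (g_2)_{12},
\end{equation*}
which depends only on the first row of $A'$ and the first row of $g_2$. If the first row of $A'$ vanishes, take $g_2 = I_2$. Otherwise the pair $((g_2)_{11},(g_2)_{12}) = (-A'_{12}, A'_{11})$ is a nonzero vector in $k^2$ satisfying the required equation, and any nonzero row vector in $k^2$ can be extended to a matrix in $\spl_2(k)$ (complete to a basis and rescale the second row to make the determinant $1$). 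This yields the desired $g_2$.

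There is no real obstacle here: both steps are elementary linear algebra over $k$, and the key structural point is simply that the two $\spl_2$-factors act ``independently enough'' for a two-step normalization to succeed. The only minor subtlety worth spelling out in the write-up is the extension of a prescribed nonzero first row to an $\spl_2(k)$ matrix, which uses that $k$ is a field (so determinants can be normalized to $1$).
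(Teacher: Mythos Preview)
Your proof is correct. The paper itself does not prove this lemma: it is listed among several lemmas described as ``easy'' with the proof omitted. Your two-step argument (first normalize $v$ via $g_1$ using Lemma~\ref{lem:eliminate-standard}, then use the still-free $g_2$ to kill the $(1,1)$-entry of $g_1 A\,{}^t g_2$) is exactly the natural approach and is presumably what the authors had in mind.
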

\begin{lem}
\label{lem:eliminate-2m(32)}
Let $G=\spl_3\times \spl_2$ and $V=\aff^2\oplus \m_{3,2}$
where $(g_1,g_2)\in G$ acts on $V$ by $V\ni (v,A)\mapsto (g_2v,g_1A{}^tg_2)$.
Then for any element $(v,A)\in V_k$, there exists $g\in G_k$ such that 
if $g(v,A)=(w,B)$ then the first entry of $w$ and the first row of $B$ 
are $0$.  
\end{lem}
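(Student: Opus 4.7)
The plan is to produce the required $g=(g_1,g_2)\in \spl_3(k)\times \spl_2(k)$ by a two-step reduction, exploiting the fact that the $g_1$-factor does not act on $v$. I would first use $g_2$ to clean up $v$, then use $g_1$ to clean up the matrix; no iteration is needed.

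First I would apply Lemma \ref{lem:eliminate-standard} with $n=2$ to find $g_2\in\spl_2(k)$ such that $g_2v=[0,*]$. Under the element $(I_3,g_2)$, the pair $(v,A)$ transforms to $(g_2v,A\,{}^tg_2)$, so after this step the first entry of $v$ is zero while the matrix has become $A':=A\,{}^tg_2\in\m_{3,2}(k)$.

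Next I would apply Lemma \ref{lem:eliminate-nm-matrix} with $n=3$, $m=2$ to the matrix $A'$ to obtain $g_1\in\spl_3(k)$ such that $g_1A'$ has its first row (the single row $i=1\ccd n-m$) equal to zero. Applying $(g_1,I_2)$ to the already-reduced pair $(g_2v,A')$ gives $(g_2v,g_1A\,{}^tg_2)$, which has first entry of $w$ equal to zero and first row of $B$ equal to zero, as required.

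The composition of these two actions corresponds to the element $(g_1,g_2)\in G_k$, which is what we want. The only thing to check — and it is immediate — is that the second step does not spoil the first: since $(g_1,I_2)$ acts on $v$ as $I_2\cdot v = v$, the condition on $w$ established in the first step is preserved. There is really no obstacle here; the lemma is a direct combination of Lemmas \ref{lem:eliminate-standard} and \ref{lem:eliminate-nm-matrix} together with the observation that the $\spl_3$-factor acts trivially on the $\aff^2$-component.
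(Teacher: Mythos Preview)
Your proof is correct. The paper itself omits the proof of this lemma, noting only that it and its companions are ``easy''; your two-step argument, combining Lemmas~\ref{lem:eliminate-standard} and~\ref{lem:eliminate-nm-matrix} with the observation that the $\spl_3$-factor acts trivially on the $\aff^2$-component, is precisely the kind of routine verification the authors had in mind.
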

\begin{lem}
\label{lem:eliminate-2m(3-32)}
Let $G=\spl_3\times \spl_2$ and $V=\aff^3\oplus \m_{3,2}$
where $(g_1,g_2)\in G$ acts on $V$ by $V\ni (v,A)\mapsto (g_1v,g_1A{}^tg_2)$.
Then for any element $(v,A)\in V$, there exists $g\in G$ such that 
if $g(v,A)=(w,B)$ then the first two entries of $w$ and the 
$(1,1)$-entry of $B$ are $0$.  
\end{lem}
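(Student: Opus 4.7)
The plan is to first apply Lemma~\ref{lem:eliminate-standard} to move $v$ to the form $[0,0,s]$ via some $g_1\in\spl_3$, and then to zero the $(1,1)$-entry of $A$ using the residual stabilizer together with the full $\spl_2$-factor. Throughout, recall that the action is $(g_1,g_2)\cdot(v,A)=(g_1v,\,g_1A\,{}^tg_2)$, so acting on $v$ also affects $A$.

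After the first reduction we have $v=[0,0,s]$. If $s=0$ then $v=0$ and the whole group $\spl_3\times\spl_2$ still acts on the new $A$; Lemma~\ref{lem:eliminate-nm-matrix} with $n=3$, $m=2$ produces $g_1'\in\spl_3$ killing the entire first row of $A$, in particular its $(1,1)$-entry, and we are done. So we may assume $s\neq 0$.

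In this case the stabilizer in $\spl_3$ of the locus $\{[0,0,*]\}\subset\aff^3$ is the parabolic $P$ of block-lower-triangular matrices $\begin{pmatrix}H&0\\r&t\end{pmatrix}$ with $H\in\gl_2$, $r\in\m_{1,2}$, and $t=(\det H)^{-1}$. Writing $A=\begin{pmatrix}A_1\\a_2\end{pmatrix}$ with $A_1\in\m_2$ and $a_2\in\m_{1,2}$, the top two rows of the transformed matrix are $HA_1\,{}^tg_2$, so it suffices to find $(H,g_2)\in\gl_2\times\spl_2$ with $(HA_1\,{}^tg_2)_{11}=0$. A direct computation gives $(HA_1\,{}^tg_2)_{11}=hA_1f$, where $h$ is the first row of $H$ and $f$ is the first column of $g_2$. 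As $(H,g_2)$ varies, $h$ and $f$ range independently over all nonzero row, resp.\ column, vectors in $\aff^2$, since any nonzero row extends to an invertible $2\times 2$ matrix and any nonzero column extends to a determinant-one $2\times 2$ matrix. Hence if $A_1\neq 0$, pick any nonzero $f$; then either $A_1f=0$ (any nonzero $h$ works) or $A_1f\neq 0$ and the hyperplane $\{h:h(A_1f)=0\}$ still contains nonzero vectors. Completing $h$ to $H\in\gl_2$ and $f$ to $g_2\in\spl_2$, and setting $r=0$, $t=(\det H)^{-1}$, produces the required element.

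The argument is essentially routine, parallel in spirit to the earlier elimination lemmas. The only mild point to verify is that the parabolic $P$ really does provide full $\gl_2$-freedom in the top-left block (true since the scalar $t$ absorbs $(\det H)^{-1}$) and that restricting to $\spl_2$ on the right still lets the first column of $g_2$ take any nonzero value in $\aff^2$ (immediate). Composing the two stages gives the desired $g\in G_k$.
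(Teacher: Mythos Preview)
Your argument is correct and complete; the paper itself declares this lemma ``easy'' and omits the proof, so there is nothing to compare against. One small slip: in your computation of $(HA_1\,{}^tg_2)_{11}=hA_1f$, the vector $f$ is the first \emph{row} of $g_2$ (equivalently the first column of ${}^tg_2$), not the first column of $g_2$; since either ranges over all nonzero vectors of $\aff^2$ as $g_2$ varies in $\spl_2$, the argument is unaffected. You might also state explicitly that the case $A_1=0$ is trivial, though it is clear from context.
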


The following lemma is Witt's theorem. 
\begin{lem}
\label{lem:alernating-matrix}
Let $n>0$ be an integer, $G=\spl_n$ and $V=\wedge^2 \aff^n$. 
We identify $V$ with the space of alternating matrices with diagonal 
entries $0$ (this assumption is necessary if $\ch(k)=2$).  
\begin{itemize}
\item[(1)]
If the rank of $A\in V_k$ is $m$ then $m$ is even.  
\item[(2)]
Suppose that $m=2l$. There exists $g\in G_k$ such that 
if $B=(b_{ij})=gA{}^tg$ then $b_{ij}=0$ unless 
$(i,j)=(n-2l+1,n-2l+2),(n-2l+2,n-2l+1)\ccd (n-1,n),(n,n-1)$. 
In particular, if $n$ is odd then the first row and the first column 
are zero.  
\end{itemize}
\end{lem}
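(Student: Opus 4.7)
The plan is to prove both parts together by induction on $n$, reducing $n$ by $2$ at each step, following the standard symplectic reduction argument but with care to stay inside $\spl_n$ rather than $\gl_n$.

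Part (1) I would handle as a general fact independent of the group action. The rank of a matrix is invariant under field extension, so I may pass to $\bar k$. If $A$ has rank $m$ then $A$ induces a non-degenerate alternating form on $\aff^n / \ker A$, which has dimension $m$. But a non-degenerate alternating form only exists in even dimension: for instance, $\det A = \det {}^tA = \det(-A) = (-1)^m \det A$ forces $\det A = 0$ when $m$ is odd (applied to the non-degenerate restriction), so $m$ must be even.

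For part (2) I would induct on $n$, with trivial base cases $n \leq 2$. For the step, if $A = 0$ set $g = I$. Otherwise pick indices $i < j$ with $a_{ij} \neq 0$. Using a signed permutation matrix $\sigma \in \spl_n(k)$ (any permutation matrix has determinant $\pm 1$; composing with $\diag(-1,1,\ldots,1)$ when necessary restores $\det = 1$, and this only flips signs of a row and column of the transformed matrix, not its zero pattern), I replace $A$ by $\sigma A \,{}^t\sigma$ so that the new $(n-1,n)$-entry $b$ is nonzero. Then for each $k \leq n-2$, the transvection $E_{k,n-1}(-a_{k,n}/b) \in \spl_n(k)$ applied through $A \mapsto g A \,{}^t g$ kills the $(k,n)$-entry (and its transpose), and similarly $E_{k,n}(a_{k,n-1}/b)$ kills the $(k,n-1)$-entry; none of these moves affects entries $(p,q)$ with $p,q \leq n-2$ nor the $(n-1,n)$-entry. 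The result has block form $A' \oplus J$ with $J = \bigl(\begin{smallmatrix} 0 & b \\ -b & 0 \end{smallmatrix}\bigr)$ and $A'$ alternating of size $n-2$. Applying the inductive hypothesis to $A'$ yields $g' \in \spl_{n-2}(k)$, and $\diag(g',I_2) \in \spl_n(k)$ finishes the step.

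The main obstacle is maintaining $\det = 1$ throughout: naive row swaps give determinant $-1$, and a Witt-style proof for $\gl_n$ would not directly descend. The resolution is the observation above that any parity defect is absorbed by a $\diag(-1,1,\ldots,1)$ factor, which only permutes signs and hence preserves the claimed zero pattern; all transvections used in the clearing step already lie in $\spl_n$. Iterating the block reduction places the nonzero entries exactly at positions $(n-2l+p, n-2l+p+1)$ and $(n-2l+p+1, n-2l+p)$ for $p = 1,3,\ldots,2l-1$, as claimed. When $n$ is odd we have $n-2l \geq 1$, so the first row and column lie strictly above/left of the bottom-right $2l \times 2l$ block and are zero.
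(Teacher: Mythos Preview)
The paper does not prove this lemma at all; it simply states ``The following lemma is Witt's theorem'' and moves on. Your direct symplectic-reduction argument is a reasonable way to supply the missing proof, and the overall strategy is sound.

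Two small issues to fix. First, your determinant argument for Part~(1), $\det A = (-1)^m \det A$, is vacuous when $\ch(k)=2$, which is exactly the case the lemma takes care to include. You can either replace it by the standard hyperbolic-pair argument (pick $v\neq 0$, find $w$ with $\langle v,w\rangle\neq 0$, pass to $\langle v,w\rangle^\perp$, induct), or simply note that your inductive proof of Part~(2) already yields Part~(1): the block form $A'\oplus J$ has rank equal to $\operatorname{rank} A' + 2$.

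Second, the claim ``none of these moves affects entries $(p,q)$ with $p,q\leq n-2$'' is false: the transvection $E_{k,n-1}(c)$ adds $c$ times row~$n-1$ to row~$k$, so entries $(k,q)$ with $q\leq n-2$ do change. This is harmless for your argument, since you only need the resulting matrix to have block shape $A'\oplus J$ for \emph{some} alternating $A'$, to which you then apply the inductive hypothesis; you never use that $A'$ equals the original top-left block. Just drop the false claim and the proof goes through.
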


We now explain our strategy of proving that $S_{\be_i}=\emptyset$. 
It is enough to prove that $Z^{\sst}_{\be_i}=\emptyset$. 

We remind the reader that $M^s_{\be_i}$ is the semi-simple part of $M_{\be_i}$. 
Since $M^s_{\be_i}$ is connected, it  has no non-trivial character and so  
$M^s_{\be_i}\sub G_{\text{st},\be_i}$. What we do is the 
following.

(1) For any $x\in Z_{\be_i}$, we find $g\in M^s_{\be_i}$ 
such that certain coordinates of $gx$ are $0$. 

(2) Assuming that certain coordinates of $x$ are $0$, 
we find a 1PS (one parameter subgroup) 
$\lam(t)$ of $G_{\text{st},\be_i}$ such that 
if $x_j$ is a non-zero coordinate of $x$ then 
the weight of $x_j$ with respect to $\lam(t)$ 
is positive, i.e., if $\coorde_j$ is the corresponding
coordinate vector then $\lam(t)\coorde_j=t^{a_j}\coorde_j$
with $a_j>0$.  

Note that if (1), (2) are carried out then $S_{\be_i}=\emptyset$. 

In the following table, for each $i$, we list 
which coordinates of $x$ we can eliminate and 
the 1PS with the property (2). For example, 
consider $\be_1$. 

\vskip 10pt

\begin{center}

\begin{tabular}{|l|l|l|}
\hline
\size $1$ & \size $M_{[2],\emptyset,[1]}$, \;  
$\Lam^{3,1}_{2,[1,3]}\oplus \Lam^{2,1}_{1,[1,2]}\otimes \Lam^{3,1}_{2,[1,3]}$ 
\hskip 0.05in $\;$
& \size $[2,2,-4,-5,-5,10,-4,4]$  \\
\cline{2-3} 
& \size $x_{311},x_{321}=0$ 
& \size $x_{331},x_{112},x_{122},x_{132},x_{212},x_{222},x_{232}$, \quad $[2,1,1,16,1,1,16]$ 
\hskip 0.05in $\;$ \\ 
\hline
\end{tabular}

\end{center}

\vskip 10pt
These entries mean the following.

\begin{itemize}
\item[(1)]
The stratum in question is $\be_1$. 
\item[(2)]
$M_{\be_1} = M_{[2],\emptyset,[1]}$.
\item[(3)]
$Z_{\be_1}\cong \Lam^{3,1}_{2,[1,3]}\oplus 
\Lam^{2,1}_{1,[1,2]}\otimes \Lam^{3,1}_{2,[1,3]}$ 
as \rep s of $M^s_{\be_1}$ (see (\ref{eq:Lam-defn})). 
\item[(4)]
The 1PS 
\begin{math}
\lam(t)=(\diag(t^2,t^2,t^{-4}),\diag(t^{-5},t^{-5},t^{10}),
\diag(t^{-4},t^4)) 
\end{math}
has the required property. 
\item[(5)]
We can make $x_{311},x_{321}=0$.  
\item[(6)]
The weights of $x_{331},x_{112},x_{122},x_{132},x_{212},x_{222},x_{232}$ 
are $2,1,1,16,1,1,16$ respectively. For example, 
$\lam(t)e_{232}=t^{16}e_{232}$. 
\end{itemize}

(5) will be proved after the table. (6) can be verified by hand easily, 
but we shall point out later that it can be verified by a maple program.  
We did not list $M^s_{\be_1}$ because we can easily 
determine $M^s_{\be_1}$ from $M_{\be_1}$. In this case 
the sizes of the blocks are $2,1$ in $G_1$, $3$ in $G_2$ 
and $1,1$ in $G_3$ and so 
$M^s_{\be_1}\cong \spl_3\times \spl_2$. 

There are requirements for the 1PS. Since $\lam(t)$ 
is a 1PS of $G_{\text{st},\be_1}\sub M^1_{\be_1}=M_{\be_1}\cap G_{\text{st}}$, 
the sum of exponents for each of $G_1,G_2,G_3$ is $0$. 
In this case $2+2-4=-5-5+10=-4+4=0$. 
If $\lam(t)$ is a 1PS of $M^1_{\be_1}$, 
it is contained in $G_{\text{st},\be_1}$ if and only if 
it is orthogonal to $\be_1$. In this case 
\begin{equation*}
(-2)\cdot 2+(-2)\cdot 2 + 4\cdot(-4)
+ 0\cdot (-5) + 0\cdot (-5) + 0\cdot 10
+ (-3)\cdot (-4)
+ 3\cdot 4=0.
\end{equation*}

If we find a 1PS with required properties 
then we are not obliged to show how we found it. 
However, we chose the case $\be_{10}$ to explain 
how we found the 1PS for the sake of the reader.  
For other $\be_i$'s, we only provide the required information. 

We consider the case $\be_{10}=\tfrac 1{114}(-2,-2,4,-6,0,6,-3,3)$. 

In this case $M^1_{\be_{10}}=M^1_{[2],[1,2],[1]}$. 
It is slightly easier to consider the semi-simple 
part and the torus part (the center) separately. The semi-simple part
is $\spl_2$ and the torus part is $\gl_1^4$. 
Elements of the center of $M^1_{[2],[1,2],[1]}$ 
can be expressed as 
\begin{equation*}
t = (\diag(t_1^{-1}I_2,t_1^2),\diag(t_2^{-1},t_3^{-1},t_2t_3),\diag(t_4^{-1},t_4)).
\end{equation*}
On the torus part, the character $\chi_{\be_{10}}$ is proportional to 
the character $\chi(t)=t_1^{12}t_2^{12}t_3^6t_4^6$. 
Since $G_{\text{st},\be_{10}}$ is the identity component of 
$\ker(\chi_{\be_{10}})$, we consider $t$ such that 
$t_1^2t_2^2t_3t_4=1$, i.e., $t_4=t_1^{-2}t_2^{-2}t_3^{-1}$.   
We also consider elements of the form 
$((t_5^{-1},t_5,1),I_3,I_2)\in M^s_{[2],[1,2],[1]}$.  

The coordinates of $Z_{\be_{10}}$ are 
$x_{131},x_{231},x_{321},x_{122},x_{222},x_{312}$.  
The action of the product of the above elements 
are scalar multiplications as in the following table. 

\vskip 10pt

\tiny

\begin{center}

\begin{tabular}{|c|c|c|c|c|c|}
\hline
$x_{131}$ & $x_{231}$ & $x_{321}$ & $x_{122}$ & $x_{222}$ & $x_{312}$ \\
\hline
$t_1t_2^3t_3^2t_5^{-1}$ & $t_1t_2^3t_3^2t_5$ 
& $t_1^4t_2^2$ 
& $t_1^{-3}t_2^{-2}t_3^{-2}t_5^{-1}$ & $t_1^{-3}t_2^{-2}t_3^{-2}t_5$
& $t_2^{-3}t_3^{-1}$ \\
\hline

\end{tabular}

\end{center}

\normalsize

\vskip 10pt

Both $\lan e_{131},e_{231}\ran$ and 
$\lan e_{122},e_{222}\ran$ 
are the standard \rep{} of $\spl_2$. 
So it is possible to make $x_{131}=0$ or $x_{122}=0$. 
We choose to make $x_{131}=0$.  In this case, to make $x_{122}=0$ 
does not work and one has to do trial and error in some cases.  
We would like to find a 1PS such that the weights of 
the coordinates except for $x_{131}$ are positive.

Let $c=[c_1,c_2,c_3,c_5]\in \Z^4$ and 
$\lam_c(t)$ be the following 1PS:
\begin{equation*}
(\diag(t^{-c_1-c_5},t^{-c_1+c_5},t^{2c_1}),
\diag(t^{-c_2},t^{-c_3},t^{c_2+c_3}),
\diag(t^{2c_1+2c_2+c_3},t^{-2c_1-2c_2-c_3})).
\end{equation*}
We put 
\begin{equation*}
v_1 = 
\begin{pmatrix}
1 \\ 3 \\ 2 \\ 1
\end{pmatrix}, \;
v_2 = 
\begin{pmatrix}
4 \\ 2 \\ 0 \\ 0
\end{pmatrix}, \;
v_{31} = 
\begin{pmatrix}
-3 \\ -2 \\ -2 \\ -1
\end{pmatrix}, \;
v_{32} = 
\begin{pmatrix}
-3 \\ -2 \\ -2 \\ 1
\end{pmatrix}, \;
v_4 = 
\begin{pmatrix}
0 \\ -3 \\ -1 \\ 0
\end{pmatrix},\; 
c = 
\begin{pmatrix}
c_1 \\ c_2 \\ c_3 \\ c_5
\end{pmatrix}
\end{equation*}
and $A= (v_1 \; v_2 \; v_{31} \; v_{32} \; v_4)$. 
Then the coordinates 
$x_{231},x_{321},x_{122},x_{222},x_{312}$ have positive
weights with respect to $\lam(t)$ 
if and only if all entries of ${}^tc A$ are positive. 

By the following sequence of MAPLE commands:

\vskip 10pt
\small
\begin{verbatim}
 > with(linalg):
 > v1:= matrix(4,1,[1,3,2,1]):
 > v2:= matrix(4,1,[4,2,0,0]):
 > v31:= matrix(4,1,[-3,-2,-2,-1]):
 > v32:= matrix(4,1,[-3,-2,-2,1]):
 > v4:= matrix(4,1,[0,-3,-1,0]):
 > A:= augment(v1,v2,v31,v32,v4):
 > rref(A);
\end{verbatim}
\normalsize

\vskip 10pt
\noindent
we find that $\{v_1,v_2,v_{31},v_{32}\}$ is linearly independent
and $v_4 = -(11/8)v_1-(5/16)v_2-(9/8)v_{31}+(1/4)v_4$. 

We can choose $c$ so that 
$a_1={}^tc v_1\ccd a_4={}^tc v_{32}$ are arbitrary positive 
numbers since $\{v_1,v_2,v_{31},v_{32}\}$ is linearly independent. 
Then 
\begin{equation*}
{}^tc v_4 = -\frac {11}8 a_1 - \frac 5{16} a_2 - \frac 98 a_3 + \frac 14 a_4.
\end{equation*}
So it is enough to choose $c$ so that $a_4$ is sufficiently large. 

For example, by the following MAPLE commands:

\vskip 10pt
\small
\begin{verbatim}
 > b:= matrix(4,1,[1,4,1,19]):
 > A:= augment(transpose(augment(v1,v2,v31,v32)),b);
 > rref(A);
\end{verbatim}
\normalsize

\vskip 10pt
\noindent
it turns out that we can choose $c=[0,2,-7,9]$. 
Then 
\begin{equation*}
\lam(t) = (\diag(t^{-9},t^9,1),
\diag(t^{-2},t^7,t^{-5}),
\diag(t^{-3},t^3)).
\end{equation*}
is a 1PS with the required properties. 

Once we find $\lam(t)$, 
it is easy to verify that the 1PS has 
the required properties by MAPLE commands as follows.

\vskip 10pt

\small
\begin{verbatim}
 > restart: with(linalg): 
 > read "Home/Strata/lib/maple/more332":
 > # beta10
 > b:= matrix(1,8,[-9,9,0,-2,7,-5,-3,3]):
 > beta:= matrix(8,1,[-2,-2,4,-6,0,6,-3,3]):
 > evalm(b &* augment(w6,w8,w11,w14,w16,beta));
\end{verbatim}

\normalsize
\vskip 10pt

``Home'' is the directory where the directory ``Strata'' 
is located. Weights of the coordinates of $V$ 
has to be written in the file more332. 
The result of the above computations is 
$[4,1,1,19,1,0]$. The first five entries are positive 
and the last entry shows that the vector 
$[-9,9,0,-2,7,-5,-3,3]$ is orthogonal to $\be_{10}$. 

The following table shows relevant informations  
for $i$ such that $S_i=\emptyset$. 

\vskip 10pt

\begin{center}
\begin{tabular}{|l|l|l|}
\hline
\size $i$ & $\;$ \hskip 0.8in \size $M_{\be_i}$, $Z_{\be_i}$ 
\hskip 0.8in $\;$
& $\;$ \hskip 1.3in \size 1PS \hskip 1.1in $\;$ \\
\cline{2-3}
& \hskip 0.7in \size zero coordinates 
& \hskip 0.5in \size non-zero coordinates and their weights \\
\hline \hline
\size $1$ & \size $M_{[2],\emptyset,[1]}$, \;
\begin{math}
\Lam^{3,1}_{2,[1,3]}\oplus 
\Lam^{2,1}_{1,[1,2]}\otimes \Lam^{3,1}_{2,[1,3]}
\end{math}
& \size $[2,2,-4,-5,-5,10,-4,4]$  \\
\cline{2-3}
& \size $x_{311},x_{321}=0$ 
& \size $x_{331},x_{112},x_{122},x_{132},x_{212},x_{222},x_{232}, \quad [2,1,1,16,1,1,16]$ \\
\hline \hline 
\size $2$ & \size $M_{[1],[2],[1]}$, \; 
$\Lam^{2,1}_{1,[2,3]}\oplus 1 \oplus 
\Lam^{2,1}_{1,[2,3]}\otimes \Lam^{2,1}_{2,[1,2]}$ 
& \size $[2,-5,3,-4,4,0,-2,2]$ \\
\cline{2-3}
& \size $x_{231},x_{212}=0$
& \size $x_{331},x_{132},x_{222},x_{312},x_{322}, \quad [1,4,1,1,9]$ \\
\hline \hline 
\size $3$ & \size $M_{[2],[1],[1]}$, \;
\begin{math}
\Lam^{2,1}_{2,[2,3]}\oplus 
\Lam^{2,1}_{1,[1,2]}\otimes \Lam^{2,1}_{2,[2,3]}\oplus 1
\end{math}
& \size $[-4,4,0,2,-5,3,-2,2]$  \\
\cline{2-3}
& \size $x_{321},x_{122}=0$
& \size $x_{331},x_{132},x_{222},x_{232},x_{312}, \quad [1,1,1,9,4]$ \\
\hline \hline 
\size $7$ & \size $M_{\emptyset,[2],[1]}$, \; 
$\Lam^{3,1}_{1,[1,3]}\oplus \Lam^{3,1}_{1,[1,3]}\otimes \Lam^{2,1}_{2,[1,2]}$  
& \size $[-5,-5,10,2,2,-4,-4,4]$  \\
\cline{2-3}
& \size $x_{131},x_{231}=0$
& \size $x_{331},x_{112},x_{122},x_{212},x_{222},x_{312},x_{322}, 
\quad [2,1,1,1,1,16,16]$ \\
\hline \hline 
\size $8$ & \size $M_{[2],[2],[1]}$, \;
$1\oplus \Lam^{2,1}_{1,[1,2]}\oplus \Lam^{2,1}_{2,[1,2]}$
& \size $[-5,3,2,-2,2,0,2,-2]$  \\
\cline{2-3}
& \size $x_{132},x_{312}=0$
& \size $x_{331},x_{232},x_{322}, \quad [4,1,2]$ \\
\hline \hline 
\size $9$ & \size $M_{[1,2],[2],[1]}$, \;
$1\oplus \Lam^{2,1}_{2,[1,2]} \oplus 1\oplus \Lam^{2,1}_{2,[1,2]}$ 
& \size $[-2,7,-5,-9,9,0,-3,3]$  \\
\cline{2-3}
& \size $x_{311}=0$ 
& \size $x_{231},x_{321},x_{132},x_{212},x_{222}, \quad [4,1,1,1,19]$ \\
\hline
\end{tabular}
\end{center}

\begin{center}
\begin{tabular}{|l|l|l|}
\hline
\size $i$ & $\;$ \hskip 0.8in \size $M_{\be_i}$, $Z_{\be_i}$ 
\hskip 0.8in $\;$
& $\;$ \hskip 1.3in \size 1PS \hskip 1.1in $\;$ \\
\cline{2-3}
& \hskip 0.7in \size zero coordinates 
& \hskip 0.5in \size non-zero coordinates and their weights \\
\hline \hline 
\size $10$ & \size $M_{[2],[1,2],[1]}$, \; 
$\Lam^{2,1}_{1,[1,2]}\oplus 1\oplus \Lam^{2,1}_{1,[1,2]}\oplus 1$
& \size $[-9,9,0,-2,7,-5,-3,3]$  \\
\cline{2-3}
& \size $x_{131}=0$ 
& \size $x_{231},x_{321},x_{122},x_{222},x_{312}, \quad [1,4,1,19,1]$ \\
\hline \hline 
\size $12$ & \size $M_{[1],\emptyset,[1]}$, \;
$\Lam^{2,1}_{1,[2,3]}\otimes \Lam^{3,1}_{2,[1,3]}$ 
& \size $[0,0,0,-2,1,1,0,0]$  \\
\cline{2-3}
& \size $x_{212},x_{312}=0$
& \size $x_{222},x_{232},x_{322},x_{332}, \quad [1,1,1,1]$ \\
\hline \hline 
\size $13$ & \size $M_{\emptyset,[1],[1]}$, \;
$\Lam^{3,1}_{1,[1,3]}\otimes \Lam^{2,1}_{2,[2,3]}$ 
& \size $[-2,1,1,0,0,0,0,0]$  \\
\cline{2-3}
& \size $x_{122},x_{132}=0$
& \size $x_{222},x_{232},x_{322},x_{332}, \quad [1,1,1,1]$ \\
\hline \hline 
\size $14$ & \size $M_{[1],[2],[1]}$, \;
$\Lam^{2,1}_{1,[2,3]}\oplus \Lam^{2,1}_{1,[2,3]}\otimes \Lam^{2,1}_{2,[1,2]}$
& \size $[0,-5,5,-4,6,-2,-2,2]$  \\
\cline{2-3}
& \size $x_{231},x_{212}=0$
& \size $x_{331},x_{222},x_{312},x_{322}, \quad [1,3,3,13]$ \\
\hline \hline 
\size $15$ & \size $M_{[2],[1],[1]}$, \; 
$\Lam^{2,1}_{2,[2,3]}\oplus \Lam^{2,1}_{1,[1,2]}\otimes \Lam^{2,1}_{2,[2,3]}$
& \size $[-4,6,-2,0,-5,5,-2,2]$  \\
\cline{2-3}
& \size $x_{321},x_{122}=0$
& \size $x_{331},x_{132},x_{222},x_{232}, \quad [1,3,3,13]$ \\
\hline \hline 
\size $16$ & \size $M_{[2],[2],[1]}$, \;
$\Lam^{2,1}_{1,[1,2]}\oplus \Lam^{2,1}_{2,[1,2]}$
& \size $[-1,1,0,-1,1,0,0,0]$  \\
\cline{2-3}
& \size $x_{132},x_{312}=0$
& \size $x_{232},x_{322}, \quad [1,1]$ \\
\hline \hline 
\size $17$ & \size $M_{[1,2],[2],[1]}$, \;
$1^{2\oplus}\oplus \Lam^{2,1}_{2,[1,2]}$
& \size $[0,1,-1,-5,3,2,1,-1]$  \\
\cline{2-3}
& \size $x_{312}=0$
& \size $x_{331},x_{232},x_{322}, \quad [2,2,1]$ \\
\hline \hline 
\size $18$ & \size $M_{[1,2],[2],[1]}$, \;
$1^{2\oplus}\oplus \Lam^{2,1}_{2,[1,2]}$
& \size $[0,3,-3,-3,3,0,-1,1]$  \\
\cline{2-3}
& \size $x_{312}=0$
& \size $x_{231},x_{132},x_{322}, \quad [2,1,1]$ \\
\hline \hline 
\size $19$ & \size $M_{[2],\emptyset,\emptyset}$, \;
$\Lam^{3,1}_{2,[1,3]}\otimes \Lam^{2,1}_{3,[1,2]}$
& \size $[0,0,0,-2,1,1,0,0]$  \\
\cline{2-3}
& \size $x_{311},x_{312}=0$
& \size $x_{321},x_{331},x_{322},x_{332}, \quad [1,1,1,1]$ \\
\hline \hline 
\size $20$ & \size $M_{[1,2],[2],\emptyset}$, \; 
$\Lam^{2,1}_{3,[1,2]}\oplus \Lam^{2,1}_{2,[1,2]}\otimes \Lam^{2,1}_{3,[1,2]}$
& \size $[0,-2,2,-4,6,-2,-5,5]$ \\
\cline{2-3}
& \size $x_{231},x_{311}=0$
& \size $x_{331},x_{232},x_{312},x_{322}, \quad [3,9,3,13]$ \\
\hline \hline 
\size $21$ & \size $M_{[1,2],[1],[1]}$, \;
$\Lam^{2,1}_{2,[2,3]}\oplus \Lam^{2,1}_{2,[2,3]}\oplus 1$ 
& \size $[6,0,-6,-4,-9,13,-5,5]$ \\
\cline{2-3}
& \size $x_{321}=0$
& \size $x_{331},x_{122},x_{132},x_{212}, \quad [2,2,24,1]$ \\
\hline \hline 
\size $23$ & \size $M_{[2],[1,2],[1]}$, \;
$1\oplus \Lam^{2,1}_{1,[1,2]}\oplus 1$
& \size $[-5,3,2,0,1,-1,1,-1]$ \\
\cline{2-3}
& \size $x_{132}=0$
& \size $x_{331},x_{232},x_{322}, \quad [2,1,2]$ \\
\hline \hline 
\size $24$ & \size $M_{[2],[1,2],[1]}$, \;
$1\oplus \Lam^{2,1}_{1,[1,2]}\oplus 1$
& \size $[-3,3,0,0,3,-3,-1,1]$ \\
\cline{2-3}
& \size $x_{132}=0$
& \size $x_{321},x_{232},x_{312}, \quad [2,1,1]$ \\
\hline \hline 
\size $25$ & \size $M_{\emptyset,[2],\emptyset}$, \;
$\Lam^{3,1}_{1,[1,3]}\otimes \Lam^{2,1}_{3,[1,2]}$
& \size $[-2,1,1,0,0,0,0,0]$ \\
\cline{2-3}
& \size $x_{131},x_{132}=0$
& \size $x_{231},x_{331},x_{232},x_{332}, \quad [1,1,1,1]$ \\
\hline \hline 
\size $26$ & \size $M_{[2],[1,2],\emptyset}$, \;
$\Lam^{2,1}_{1,[1,2]}\otimes \Lam^{2,1}_{3,[1,2]}\oplus \Lam^{2,1}_{3,[1,2]}$
& \size $[-4,6,-2,0,-2,2,-5,5]$ \\
\cline{2-3}
& \size $x_{131},x_{321}=0$
& \size $x_{231},x_{132},x_{232},x_{322}, \quad [3,3,13,1]$ \\
\hline \hline 
\size $27$ & \size $M_{[1],[1,2],[1]}$, \;
$\Lam^{2,1}_{1,[2,3]}\oplus 1 \oplus \Lam^{2,1}_{1,[2,3]}$
& \size $[-4,-9,13,6,0,-6,-5,5]$ \\
\cline{2-3}
& \size $x_{231}=0$
& \size $x_{331},x_{122},x_{212},x_{312}, \quad [2,1,2,24]$ \\
\hline \hline 
\size $30$ & \size $M_{[2],\emptyset,[1]}$, \;
$\Lam^{3,1}_{2,[1,3]}$ 
& \size $[0,0,0,-1,0,1,0,0]$ \\
\cline{2-3}
& \size $x_{312},x_{322}=0$
& \size $x_{332}, \quad [1]$ \\
\hline \hline 
\size $31$ & \size $M_{[1,2],[2],[1]}$, \;
$1\oplus \Lam^{2,1}_{2,[1,2]}$ 
& \size $[-1,4,-3,-4,4,0,0,0]$ \\
\cline{2-3}
& \size $x_{312}=0$
& \size $x_{232},x_{322}, \quad [4,1]$ \\
\hline \hline 
\size $32$ & \size $M_{[2],[2],[1]}$, \;
$1\oplus \Lam^{2,1}_{2,[1,2]}$ 
& \size $[0,0,0,-5,3,2,2,-2]$ \\
\cline{2-3}
& \size $x_{312}=0$
& \size $x_{331},x_{322}, \quad [4,1]$ \\
\hline \hline 
\size $33$ & \size $M_{[1,2],[2],[1]}$, \;
$1\oplus \Lam^{2,1}_{2,[1,2]}$ 
& \size $[0,0,0,-7,5,2,4,-4]$ \\
\cline{2-3}
& \size $x_{312}=0$
& \size $x_{231},x_{322}, \quad [6,1]$ \\
\hline
\end{tabular}

\end{center}

\begin{center}

\begin{tabular}{|l|l|l|}
\hline
\size $i$ & $\;$ \hskip 0.8in \size $M_{\be_i}$, $Z_{\be_i}$ 
\hskip 0.8in $\;$
& $\;$ \hskip 1.3in \size 1PS \hskip 1.1in $\;$ \\
\cline{2-3}
& \hskip 0.7in \size zero coordinates 
& \hskip 0.5in \size non-zero coordinates and their weights \\
\hline \hline
\size $34$ & \size $M_{\emptyset,[2],[1]}$, \;
$\Lam^{3,1}_{1,[1,3]}$ 
& \size $[-1,0,1,0,0,0,0,0]$  \\
\cline{2-3}
& \size $x_{132},x_{232}=0$
& \size $x_{332}, \quad [1]$ \\
\hline \hline 
\size $35$ & \size $M_{[2],[1,2],[1]}$, \;
$\Lam^{2,1}_{1,[1,2]}\oplus 1$ 
& \size $[-4,4,0,-1,4,-3,0,0]$ \\
\cline{2-3}
& \size $x_{132}=0$
& \size $x_{232},x_{322}, \quad [8,4]$ \\
\hline \hline 
\size $36$ & \size $M_{[2],[2],[1]}$, \;
$1\oplus \Lam^{2,1}_{1,[1,2]}$ 
& \size $[-5,3,2,0,0,0,2,-2]$ \\
\cline{2-3}
& \size $x_{132}=0$
& \size $x_{331},x_{232}, \quad [4,1]$ \\
\hline \hline 
\size $37$ & \size $M_{[2],[1,2],[1]}$, \;
$1\oplus \Lam^{2,1}_{1,[1,2]}$  
& \size $[-7,5,2,0,0,0,4,-4]$  \\
\cline{2-3}
& \size $x_{132}=0$
& \size $x_{321},x_{232}, \quad [6,1]$ \\
\hline \hline 
\size $43$ & \size $M_{[2],[1],[1]}$, \;
$\Lam^{2,1}_{2,[2,3]}$  
& \size $[0,0,0,0,-1,1,0,0]$ \\
\cline{2-3}
& \size $x_{322}=0$
& \size $x_{332}, \quad [1]$ \\
\hline \hline 
\size $44$ & \size $M_{[1],[2],[1]}$, \;
$\Lam^{2,1}_{1,[2,3]}$ 
& \size $[0,-1,1,0,0,0,0,0]$ \\
\cline{2-3}
& \size $x_{232}=0$
& \size $x_{332}, \quad [1]$ \\
\hline \hline 
\size $46$ & \size $M_{[2],[2],\emptyset}$, \;
$\Lam^{2,1}_{3,[1,2]}$ 
& \size $[0,0,0,0,0,0,-1,1]$ \\
\cline{2-3}
& \size $x_{331}=0$
& \size $x_{332}, \quad [1]$ \\
\hline

\end{tabular}

\end{center}

\vskip 10pt

We shall verify that it is possible to eliminate coordinates 
as in the above table. 

\vskip 10pt

(1) $\beta_1=\tfrac {1} {42} (-2,-2,4,0,0,0,-3,3)$, 
$\beta_7=\tfrac {1} {42} (0,0,0,-2,-2,4,-3,3)$. 
We only consider $\be_1$. 

$Z_{\be_1}$ is spanned by $\coorde_i=e_{jkl}$ 
for the following $i,jkl$. 

\tiny

\vskip 5pt

\begin{center}

\begin{tabular}{|c|c|c|c|c|c|c|c|c|c|}
\hline
$i$ & $\underline 7$ & $\underline 8$ & $9$ & $10$ 
& $11$ & $12$ & $13$ & $14$ & $15$ \\
\hline
$jkl$ & $\underline{311}$ & $\underline{321}$ & $331$ & $112$ 
& $122$ & $132$ & $212$ & $222$ & $232$ \\
\hline
\end{tabular}

\end{center}

\normalsize

\vskip 5pt


The vertical columns mean $\coorde_7=e_{311}$ for example.
Underlines mean that the corresponding coordinates are eliminated. 
We are writing this kind of correspondence 
for the following reason. 

\vskip 10pt
\begin{itemize}
\item[(a)]
The numbering such as $e_{311}$ is convenient 
to determine $Z_{\be_i}$ as a \rep{} of $M^s_{\be_i}$. 
\item[(b)]
We made sure that the 1PS we found has the correct property 
by MAPLE and the numbering such as $\coorde_7$ 
is more convenient. 
\end{itemize}

\vskip 0pt
Lemma \ref{lem:eliminate-standard} implies that 
we may assume that $x_{311},x_{321}=0$. 

%
%
%
%

\vskip 5pt
(2) $\be_2 = \tfrac {1} {22} (-4,2,2,-2,-2,4,-3,3)$,  
$\be_3 = \tfrac {1} {22} (-2,-2,4,-4,2,2,-3,3)$. 
We only consider $\be_2$. 

$Z_{\be_2}$ is spanned by $\coorde_i=e_{jkl}$ 
for the following $i,jkl$. 

\tiny
\vskip 5pt

\begin{center}
 
\begin{tabular}{|c|c|c|c|c|c|c|c|}
\hline
$i$ & $\underline 6$ & $9$ & $12$ & $\underline{13}$ & $14$ & $16$ & $17$ \\
\hline
$jkl$ & $\underline{231}$ & $331$ & $132$ 
& $\underline{212}$ & $222$ & $312$ & $322$ \\
\hline
\end{tabular}

\end{center}
\normalsize

\vskip 5pt


Lemma \ref{lem:eliminate-2m(2)}
implies that we may assume that $x_{231},x_{212}=0$. 

%
%
%
%
%
%
%
%
%
%

\vskip 5pt

\vskip 5pt

(3) $\be_8 = \tfrac {5} {66} (-2,-2,4,-2,-2,4,-3,3)$.  

$Z_{\be_8}$ is spanned by $\coorde_i=e_{jkl}$ 
for the following $i,jkl$. 
\tiny
%
%
\begin{tabular}{|c|c|c|c|c|c|c|}
\hline
$i$ & $9$ & $\underline{12}$ & $15$ & $\underline{16}$ & $17$ \\
\hline
$jkl$ & $331$ & $\underline{132}$ & $232$ & $\underline{312}$ & $322$  \\
\hline
\end{tabular}
%
%
\normalsize

\vskip 5pt

%
Since $\lan e_{132},e_{232}\ran$, 
$\lan e_{312},e_{322}\ran$
are standard \rep s of two $\spl_2$'s, 
we may assume that $x_{132},x_{312}=0$ 
by Lemma \ref{lem:eliminate-standard}. 

%
%
%
%
%
%

\vskip 5pt

(4) $\be_9 = \tfrac {1} {114} (-6,0,6,-2,-2,4,-3,3)$,  
$\be_{10} = \tfrac {1} {114} (-2,-2,4,-6,0,6,-3,3)$. 
We only consider $\be_9$. 


$Z_{\be_9}$ is spanned by $\coorde_i=e_{jkl}$ 
for the following $i,jkl$. 
\tiny
%
%
\begin{tabular}{|c|c|c|c|c|c|c|}
\hline
$i$ & $6$ & $\underline{7}$ & $8$ & $12$ & $13$ & $14$ \\
\hline
$jkl$ & $231$ & $\underline{311}$ & $321$ & $132$ & $212$ & $222$ \\
\hline
\end{tabular}
%
%
\normalsize

\vskip 5pt

By applying Lemma \ref{lem:eliminate-standard} 
to $\lan e_{311},e_{321}\ran$
(and not to $\lan e_{212},e_{222}\ran$), 
we may assume that $x_{311}=0$.

%
%
%
%
%
%
%
%
%
%

\vskip 5pt



%
%
%
%
%
%
%
%

\vskip 5pt

(5) $\be_{12} = \tfrac {1} {6} (-2,1,1,0,0,0,-3,3)$,  
$\be_{13} = \tfrac {1} {6} (0,0,0,-2,1,1,-3,3)$. 
We only consider $\be_{12}$.  


$Z_{\be_{12}}$ is spanned by $e_i=e_{jkl}$ 
for the following $i,jkl$. 
\tiny
%
%
\begin{tabular}{|c|c|c|c|c|c|c|}
\hline
$i$ & $\underline{13}$ & $14$ & $15$ & $\underline{16}$ & $17$ & $18$ \\
\hline
$jkl$ & $\underline{212}$ & $222$ & $232$ 
& $\underline{312}$ & $322$ & $332$ \\
\hline
\end{tabular}
%
%
\normalsize

\vskip 5pt

It can be identified with $\m_{3,2}$. 
Lemma \ref{lem:eliminate-nm-matrix} implies that
we may assume that $x_{212},x_{312}=0$.  

%

\vskip 5pt




\vskip 5pt

(6) $\be_{14} = \tfrac {1} {42} (-14,7,7,-2,-2,4,-3,3)$,  
$\be_{15} = \tfrac {1} {42} (-2,-2,4,-14,7,7,-3,3)$. 
We only consider $\be_{14}$. 


$Z_{\be_{14}}$ is spanned by $e_i=e_{jkl}$ 
for the following $i,jkl$. 
\tiny
%
%
\begin{tabular}{|c|c|c|c|c|c|c|}
\hline
$i$ & $\underline{6}$ & $9$ & $\underline{13}$ & $14$ & $16$ & $17$ \\
\hline
$jkl$ & $\underline{231}$ & $331$ & $\underline{212}$ 
& $222$ & $312$ & $322$ \\
\hline
\end{tabular}
%
%
\normalsize

\vskip 5pt


%
%
%
%
%
%


Lemma \ref{lem:eliminate-2m(2)} implies that
we may assume that $x_{231},x_{212}=0$.

\vskip 5pt

%
%
%
%
%
%
%



\vskip 5pt

(7) $\be_{16} = \tfrac {1} {12} (-1,-1,2,-1,-1,2,-6,6)$.  

$Z_{\be_{16}}$ is spanned by $\coorde_i=e_{jkl}$ 
for the following $i,jkl$. 
\tiny
%
%
\begin{tabular}{|c|c|c|c|c|c|c|}
\hline
$i$ & $\underline{12}$ & $15$ & $\underline{16}$ & $17$ \\
\hline
$jkl$ & $\underline{132}$ & $232$ & $\underline{312}$ & $322$ \\
\hline
\end{tabular}
%
%
\normalsize

\vskip 5pt

Since $\lan e_{132},e_{232}\ran$, 
$\lan e_{312},e_{322}\ran$
are standard \rep s of two $\spl_2$'s, 
we may assume that $x_{132},x_{312}=0$ 
by Lemma \ref{lem:eliminate-standard}.

%
%
%
%
%

\vskip 5pt

(8) $\be_{17} = \tfrac {1} {30} (-10,-1,11,-4,-4,8,-6,6)$,  
$\be_{23} = \tfrac {1} {30} (-4,-4,8,-10,-1,11,-6,6)$. 
We only consider $\be_{17}$.  

%

$Z_{\be_{17}}$ is spanned by $\coorde_i=e_{jkl}$ 
for the following $i,jkl$. 
\tiny
%
%
\begin{tabular}{|c|c|c|c|c|c|c|}
\hline
$i$ & $9$ & $15$ & $\underline{16}$ & $17$ \\
\hline
$jkl$ & $331$ & $232$ & $\underline{312}$ & $322$ \\
\hline
\end{tabular}
%
%
\normalsize

\vskip 5pt

Lemma \ref{lem:eliminate-standard} implies that 
we may assume that $x_{312}=0$.

%
%
%
%

%
%
%
%
%

\vskip 5pt

(9) $\be_{18} = \tfrac {1} {78} (-14,4,10,-8,-8,16,-9,9)$, 
$\be_{24} = \tfrac {1} {78} (-8,-8,16,-14,4,10,-9,9)$. 
We only consider $\be_{18}$. 


$Z_{\be_{18}}$ is spanned by $\coorde_i=e_{jkl}$ 
for the following $i,jkl$. 
\tiny
%
%
\begin{tabular}{|c|c|c|c|c|c|c|}
\hline
$i$ & $6$ & $12$ & $\underline{16}$ & $17$ \\
\hline
$jkl$ & $231$ & $132$ & $\underline{312}$ & $322$ \\
\hline
\end{tabular}
%
%
\normalsize

\vskip 5pt

Lemma \ref{lem:eliminate-standard} implies that 
we may assume that $x_{312}=0$.

%
%
%
%
%
%
%
%
%

\vskip 5pt

(10) $\be_{19} = \tfrac {1} {3} (-1,-1,2,0,0,0,0,0)$, 
$\be_{25} = \tfrac {1} {3} (0,0,0,-1,-1,2,0,0)$. 
We only consider $\be_{19}$. 

%
%

$Z_{\be_{19}}$ is spanned by $\coorde_i=e_{jkl}$ 
for the following $i,jkl$. 
\tiny
%
%
\begin{tabular}{|c|c|c|c|c|c|c|}
\hline
$i$ & $\underline{7}$ & $8$ & $9$ & $\underline{16}$ & $17$ & $18$ \\
\hline
$jkl$ & $\underline{311}$ & $321$ & $331$ 
& $\underline{312}$ & $322$ & $332$ \\
\hline
\end{tabular}
%
%
\normalsize

\vskip 5pt

%
$Z_{\be_{19}}$ can be identified with $\m_{3,2}$. 
So Lemma \ref{lem:eliminate-nm-matrix} implies that 
we may assume that $x_{311},x_{312}=0$.

\vskip 5pt

(11) $\be_{20} = \tfrac {1} {21} (-7,2,5,-1,-1,2,0,0)$,  
$\be_{26} = \tfrac {1} {21} (-1,-1,2,-7,2,5,0,0)$.


$Z_{\be_{20}}$ is spanned by $\coorde_i=e_{jkl}$ 
for the following $i,jkl$. 
\tiny
%
%
\begin{tabular}{|c|c|c|c|c|c|c|}
\hline
$i$ & $\underline{6}$ & $\underline{7}$ & $8$ & $15$ & $16$ & $17$ \\
\hline
$jkl$ & $\underline{231}$ & $\underline{311}$ & $321$ 
& $232$ & $312$ & $322$ \\
\hline
\end{tabular}
%
%
\normalsize

\vskip 5pt

Lemma \ref{lem:eliminate-2m(2)} implies that 
we may assume that $x_{231},x_{311}=0$

%
%
%
%
%
%
%
%

\vskip 5pt

(12) $\be_{21} = \tfrac {1} {78} (-5,-2,7,-2,1,1,-6,6)$,  
$\be_{27} = \tfrac {1} {78} (-2,1,1,-5,-2,7,-6,6)$. 
We only consider $\be_{21}$. 


$Z_{\be_{21}}$ is spanned by $\coorde_i=e_{jkl}$ 
for the following $i,jkl$. 
\tiny
%
%
\begin{tabular}{|c|c|c|c|c|c|c|}
\hline
$i$ & $\underline{8}$ & $9$ & $11$ & $12$ & $13$ \\
\hline
$jkl$ & $\underline{321}$ & $331$ & $122$ & $132$ & $212$ \\
\hline
\end{tabular}
%
%
\normalsize

\vskip 5pt

We apply Lemma \ref{lem:eliminate-standard} 
to $\lan e_{321},e_{331}\ran$ and 
we may assume that $x_{321}=0$. 

%
%
%
%
%
%
%
%
%

\vskip 5pt

\vskip 5pt

(13) $\be_{30}= \tfrac {1} {6} (-2,-2,4,0,0,0,-3,3)$,  
$\be_{34}= \tfrac {1} {6} (0,0,0,-2,-2,4,-3,3)$. 
We only consider $\be_{30}$. 


$Z_{\be_{30}}$ is spanned by $\coorde_i=e_{jkl}$ 
for the following $i,jkl$. 
\tiny
%
%
\begin{tabular}{|c|c|c|c|}
\hline
$i$ & $\underline{16}$ & $\underline{17}$ & $18$ \\
\hline
$jkl$ & $\underline{312}$ & $\underline{322}$ & $332$ \\
\hline
\end{tabular}

%
\normalsize

\vskip 5pt

%
%
Lemma \ref{lem:eliminate-standard} implies that 
we may assume that $x_{312}=x_{322}=0$. 

%
%
%

\vskip 5pt

(14) $\be_{31} = \tfrac {1} {42} (-14,4,10,-2,-2,4,-21,21)$, 
$\be_{35} = \tfrac {1} {42} (-2,-2,4,-14,4,10,-21,21)$. 
We only consider $\be_{31}$. 


$Z_{\be_{31}}$ is spanned by $\coorde_i=e_{jkl}$ 
for the following $i,jkl$. 
\tiny
%
%
\begin{tabular}{|c|c|c|c|c|c|c|}
\hline
$i$ & $15$ & $\underline{16}$ & $17$ \\
\hline
$jkl$ & $232$ & $\underline{312}$ & $322$ \\
\hline
\end{tabular}

%
\normalsize

\vskip 5pt

%
Lemma \ref{lem:eliminate-standard} implies that 
we may assume that $x_{312}=0$. 

%
%
%
%
%
%
%
%

\vskip 5pt

(15) $\be_{32} = \tfrac {1} {42} (-14,-14,28,-2,-2,4,-3,3)$, 
$\be_{36} = \tfrac {1} {42} (-2,-2,4,-14,-14,28,-3,3)$. 
We only consider $\be_{32}$.  


$Z_{\be_{32}}$ is spanned by $\coorde_i=e_{jkl}$ 
for the following $i,jkl$. 
\tiny
%
%
\begin{tabular}{|c|c|c|c|c|c|c|}
\hline
$i$ & $9$ & $\underline{16}$ & $17$ \\
\hline
$jkl$ & $331$ & $\underline{312}$ & $322$ \\
\hline
\end{tabular}

%
\normalsize

\vskip 5pt

%
Lemma \ref{lem:eliminate-standard} implies that 
$x_{312}=0$. 

%
%
%
%
%
%
%
%
%
%
%

\vskip 5pt

(16) $\be_{33} = \tfrac {1} {66} (-22,8,14,-4,-4,8,-3,3)$, 
$\be_{37} = \tfrac {1} {66} (-4,-4,8,-22,8,14,-3,3)$. 
We only consider $\be_{33}$. 


$Z_{\be_{33}}$ is spanned by $\coorde_i=e_{jkl}$ 
for the following $i,jkl$. 
\tiny
%
%
\begin{tabular}{|c|c|c|c|c|c|c|}
\hline
$i$ & $6$ & $\underline{16}$ & $17$ \\
\hline
$jkl$ & $231$ & $\underline{312}$ & $322$ \\
\hline
\end{tabular}

%
\normalsize

\vskip 5pt

%
Lemma \ref{lem:eliminate-standard} implies that 
$x_{312}=0$. 

%
%
%
%
%
%
%
%
%
%

\vskip 5pt

\vskip 5pt

(17)  $\be_{43} =  \tfrac {1} {6} (-2,-2,4,-2,1,1,-3,3)$, 
$\be_{44} =  \tfrac {1} {6} (-2,1,1,-2,-2,4,-3,3)$. 
We only consider $\be_{43}$. 

%

$Z_{\be_{43}}$ is spanned by $\coorde_i=e_{jkl}$ 
for the following $i,jkl$. 
\tiny
%
%
\begin{tabular}{|c|c|c|c|c|c|c|}
\hline
$i$ & $\underline{17}$ & $18$ \\
\hline
$jkl$ & $\underline{322}$ & $332$ \\
\hline
\end{tabular}

%
\normalsize

\vskip 5pt

%
Lemma \ref{lem:eliminate-standard} implies that 
$x_{322}=0$.

%
%
%
%
%


%


%
%
%
%
%

\vskip 5pt

(33) $\be_{46} = \tfrac {1} {3} (-1,-1,2,-1,-1,2,0,0)$. 
%

$Z_{\be_{46}}$ is spanned by $\coorde_i=e_{jkl}$ 
for the following $i,jkl$. 
\tiny
%
%
\begin{tabular}{|c|c|c|c|c|c|c|}
\hline
$i$ & $\underline{9}$ & $18$ \\
\hline
$jkl$ & $\underline{331}$ & $332$ \\
\hline
\end{tabular}

%
\normalsize

\vskip 5pt

%
Lemma \ref{lem:eliminate-standard} implies that 
we may assume that $x_{331}=0$.

%
%
%
%

\section{Non-empty strata for the case (2)}
\label{sec:non-empty-strata2}

In this section and the next, we put $G_1=\gl_6,G_2=\gl_2$ 
and $G=G_1\times G_2$.  We consider the case (2). 
As before, we identify $\wedge^2 \aff^6$ with the space of 
alternating $6\times 6$ matrices with diagonal entries $0$.  
The set $\gB$ consists 
of $81$ $\be_i$'s. We use the table in 
Section 8 \cite{tajima-yukie-GIT1}. 
We shall prove that $S_{\be_i}\not=\emptyset$ for 
\begin{equation}
\label{eq:list-non-empty-62}
i = 8,13,18,35,46,66,67,75,76,78,80,81
\end{equation}
for the \pv{} (2) in this section. We shall 
prove that $S_{\be_i}=\emptyset$ for other $\be_i$'s in the next section
We proceed as in Section \ref{sec:non-empty-strata1}. 

The following table describes $M_{\be}$, $Z_{\be}$ as a 
\rep{} of $M_{\be}$, the coordinates of $Z_{\be},W_{\be}$ 
and $G_k\backslash S_{\be\, k}\cong P_{\be\,k}\backslash Y^{\sst}_{\be\,k}$. 

\vskip 5pt

\begin{center} 
 
\begin{tabular}{|c|l|l|}
\hline 
\size $i$ & \hskip 1in \size $M_{\be_i}$
& \hskip 0.9in \size $Z_{\be_i}$ \\
\hline
\size $G_k\backslash S_{\be_i\, k}$  & 
\hskip 0.5in \size
coordinates of $Z_{\be_i}$
& \size \hskip 0.4in
coordinates of $W_{\be_i}$  \\
\hline \hline 
\size $8$ & \size $M_{\emptyset,[1]}\cong \gl_6\times \gl_1^2$
& \size $\Lam^{6,2}_{1,[1,6]}$ \\
\hline
\size SP & \size
\begin{math}
\begin{matrix}
x_{122},x_{132},x_{142},x_{152},x_{162},x_{232},x_{242},x_{252} \\
x_{262},x_{342},x_{352},x_{362},x_{452},x_{462},x_{562} \hfill
\end{matrix}
\end{math} 
& \size \hskip 0.8in -  \\
\hline \hline 
\size $13$ & \size $M_{[2],[1]}\cong \gl_4\times \gl_2\times \gl_1^2$
& \size $\Lam^{4,2}_{1,[3,6]}
\oplus \Lam^{2,1}_{1,[1,2]}\otimes \Lam^{4,1}_{1,[3,6]}$ \\
\hline
\size SP & \size
\begin{math}
\begin{matrix}
x_{341},x_{351},x_{361},x_{451},x_{461},x_{561},x_{132}, \\
x_{142},x_{152},x_{162},x_{232},x_{242},x_{252},x_{262} \hfill
\end{matrix}
\end{math} 
& \size $x_{342},x_{352},x_{362},x_{452},x_{462},x_{562}$  \\
\hline \hline 
\size $18$ & \size $M_{[4],\emptyset}\cong \gl_4\times \gl_2^2$
& \size $\Lam^{4,1}_{1,[1,4]}\otimes \Lam^{2,1}_{1,[5,6]}
\otimes \Lam^{2,1}_{2,[1,2]}$ \\
\hline
\size SP & \size
\begin{math}
\begin{matrix}
x_{151},x_{161},x_{251},x_{261},x_{351},x_{361},x_{451},x_{461}, \\
x_{152},x_{162},x_{252},x_{262},x_{352},x_{362},x_{452},x_{462} \hfill
\end{matrix}
\end{math} 
& \size $x_{561},x_{562}$ \\
\hline \hline 
\size $35$ & \size $M_{[1,4],\emptyset}\cong \gl_3\times \gl_2^2\times \gl_1$
& \size 
$\Lam^{3,1}_{1,[2,4]}\otimes \Lam^{2,1}_{1,[5,6]}\otimes \Lam^{2,1}_{2,[1,2]}$ \\
\hline
\size SP & \size
\begin{math}
\begin{matrix}
x_{251},x_{261},x_{351},x_{361},x_{451},x_{461}, \\
x_{252},x_{262},x_{352},x_{362},x_{452},x_{462} \hfill
\end{matrix}
\end{math} 
& \size $x_{561},x_{562}$ \\
\hline \hline 
\size $46$ & \size $M_{[2,4],[1]}\cong \gl_2^3\times \gl_1^2$
& \size 
$\Lam^{2,1}_{1,[3,4]}\otimes \Lam^{2,1}_{1,[5,6]}\oplus 
\Lam^{2,1}_{1,[1,2]}\otimes \Lam^{2,1}_{1,[5,6]}\oplus 1$ \\
\hline
\size SP & \size
\begin{math}
\begin{matrix}
x_{351},x_{361},x_{451},x_{461}, \hfill \\
x_{152},x_{162},x_{252},x_{262},x_{342}
\end{matrix}
\end{math} 
& \size $x_{561},x_{352},x_{362},x_{452},x_{462},x_{562}$ \\
\hline \hline 
\size $66$ & \size $M_{[2,4],[1]}\cong \gl_2^3\times \gl_1^2$
& \size 
$1\oplus \Lam^{2,1}_{1,[1,2]}\otimes \Lam^{2,1}_{1,[5,6]}\oplus 1$ \\
\hline
\size SP & \size
\begin{math}
\begin{matrix}
x_{561},x_{152},x_{162},x_{252},x_{262},x_{342}
\end{matrix}
\end{math} 
& \size $x_{352},x_{362},x_{452},x_{462},x_{562}$ \\
\hline \hline 
\size $67$ & \size $M_{[2],\emptyset}\cong \gl_4\times \gl_2^2$
& \size 
$\Lam^{2,1}_{2,[1,2]}\otimes \Lam^{4,2}_{1,[3,6]}$ \\
\hline
\size $\Ex_2(k)$ & \size
\begin{math}
\begin{matrix}
x_{341},x_{351},x_{361},x_{451},x_{461},x_{561}, \\
x_{342},x_{352},x_{362},x_{452},x_{462},x_{562} \hfill
\end{matrix}
\end{math} 
& \size \hskip 0.8in -  \\
\hline \hline 
\size $74$ & \size $M_{[2,4],[1]}\cong \gl_2^3\times \gl_1^2$
& \size 
$1\oplus \Lam^{2,1}_{1,[3,4]}\otimes\Lam^{2,1}_{1,[5,6]}$ \\
\hline
\size SP & \size
\begin{math}
\begin{matrix}
x_{561},x_{352},x_{362},x_{452},x_{462}
\end{matrix}
\end{math} 
& \size $x_{562}$ \\
\hline \hline 
\size $75$ & \size $M_{[1,2,5],[1]}\cong \gl_3\times \gl_1^5$
& \size 
$\Lam^{3,1}_{1,[3,5]}\oplus 1\oplus \Lam^{3,2}_{1,[3,5]}$ \\
\hline
\size SP & \size
\begin{math}
\begin{matrix}
x_{361},x_{461},x_{561},x_{262},x_{342},x_{352},x_{452}
\end{matrix}
\end{math} 
& \size $x_{362},x_{462},x_{562}$ \\
\hline \hline 
\size $76$ & \size $M_{[4],[1]}\cong \gl_4\times \gl_2\times \gl_1^2$
& \size 
$1\oplus \Lam^{4,2}_{1,[1,4]}$ \\
\hline
\size SP & \size
\begin{math}
\begin{matrix}
x_{561},x_{122},x_{132},x_{142},x_{232},x_{242},x_{342}
\end{matrix}
\end{math} 
& \size $x_{152},x_{162},x_{252},x_{262},x_{352},x_{362},x_{452},x_{462},x_{562}$ \\
\hline \hline 
\size $78$ & \size $M_{[2],[1]}\cong \gl_4 \times \gl_2\times \gl_1^2$
& \size 
$\Lam^{4,2}_{1,[3,6]}$ \\
\hline
\size SP & \size
\begin{math}
\begin{matrix}
x_{342},x_{352},x_{362},x_{452},x_{462},x_{562}
\end{matrix}
\end{math} 
& \size \hskip 0.8in -  \\
\hline \hline 
\size $80$ & \size $M_{[3,5],\emptyset}\cong \gl_3\times \gl_2^2\times \gl_1$
& \size 
$\Lam^{2,1}_{1,[4,5]}\otimes \Lam^{2,1}_{2,[1,2]}$ \\
\hline
\size SP & \size
\begin{math}
\begin{matrix}
x_{461},x_{561},x_{462},x_{562}
\end{matrix}
\end{math} 
& \size \hskip 0.8in -  \\
\hline \hline 
\size $81$ & \size $M_{[4],[1]}\cong \gl_4\times \gl_2\times \gl_1^2$
& \size  $1$ \\
\hline
\size SP & \size
\begin{math}
\begin{matrix}
x_{562}
\end{matrix}
\end{math} 
& \size \hskip0.8in - \\
\hline 
\end{tabular}

\end{center}

\vskip 10pt

We now verify that $S_{\be_i}\not=\emptyset$ and 
determine $G_k\backslash S_{\be_i\, k}$
for the above $i$'s.

(1) $\be_8=\tfrac {1} {2} (0,0,0,0,0,0,-1,1)$.  

We identify the element 
$(g_1,\diag(t_{21},t_{22}))\in M_{\emptyset,[1]}$
with $g=(g_1,t_{21},t_{22})\in \gl_6\times \gl_1^2$.
On $M^1_{\be_8}$, 
$\chi_{\be_8}(g)=t_{21}^{-1}t_{22}=t_{22}^2$.  

Let $A(x)\in \wedge^2 \aff^6$ be the 
element such that the $(i,j)$-entry  is $x_{ij2}$ for $i<j$.   
We identify $\Lam^{6,2}_{1,[1,6]}$ with 
$\wedge^2 \aff^6$ by the map $x\mapsto A(x)$. 
Then the action of $g$ as above on $\wedge^2 \aff^6$ 
is $\wedge^2 \aff^6\ni A(x)\mapsto t_{22}g_1A(x){}^tg_1\in \wedge^2 \aff^6$. 
Let $P(x)$ be the Pfaffian of $A(x)$, which is a homogeneous cubic
polynomial. Then on $G_{\text{st},\be_8}$, 
$P(gx)=t_{22}^3 (\det g_1)P(x)=t_{22}^3 P(x)$. 
So $P(x)$ is invariant under the action of $G_{\text{st},\be_8}$. 
Therefore, 
\begin{equation*}
Z^{\sst}_{\be_8\, k} = \{x\in Z_{\be_8\, k}\mid P(x)\not=0\}. 
\end{equation*}
Witt's theorem implies that 
\begin{math} 
Z^{\sst}_{\be_8\, k}
\end{math}
is a single $M_{\be_8, k}$-orbit. 
Since $W_{\be_8}=\{0\}$, 
\begin{math}
Y^{\sst}_{\be_8\,k}
\end{math}
is also a single $P_{\be_8\, k}$-orbit. 

(2) $\be_{13}=\tfrac {1} {66} (-4,-4,2,2,2,2,-3,3)$.  

Note that $M_{[2],[1]}\cong \gl_4\times \gl_2\times \gl_1^2$.
The element 
$(\diag(g_1,g_2),\diag(t_{21},t_{22}))\in M_{[2],[1]}$
is identified with $g=(g_2,g_1,t_{21},t_{22})\in \gl_4\times \gl_2\times \gl_1^2$.
On $M^1_{\be_{13}}$, 
\begin{equation*}
\chi_{\be_{13}}(g)=(\det g_1)^{-4}(\det g_2)^2t_{21}^{-3}t_{22}^3=((\det g_2)t_{22})^6.
\end{equation*}
%

Let $\{\coorde_1,\coorde_2,\coorde_3,\coorde_4\}$ be the
standard basis of $\aff^4$ and $e_{ij}=\coorde_i\wedge \coorde_j$.  
For $x\in Z_{\be_{13}}$, let 
\begin{align*}
A(x) & = x_{341} e_{12}
+ x_{351} e_{13}
+ x_{361} e_{14}
+ x_{451} e_{23}
+ x_{461} e_{24}
+ x_{561} e_{34} \\
v_1(x) & = \sum_{i=3}^6 x_{1i2} \coorde_{i-2}, \;
v_2(x) = \sum_{i=3}^6 x_{2i2} \coorde_{i-2}.
\end{align*}
Note that $3,4,5,6$ correspond to $1,2,3,4$ of 
$\{\coorde_1,\coorde_2,\coorde_3,\coorde_4\}$. 

Let $P_1(x)$ be the polynomial such that 
\begin{equation*}
A(x) \wedge v_1(x) \wedge v_2(x) = P_1(x) \, \coorde_1\wedge \cdots \wedge \coorde_4.
\end{equation*}
Since $g$ acts on $A(x)$ (resp. $(v_1(x)\; v_2(x))$) 
by the natural action of $g_2\in \gl_4$ 
and the scalar multiplication by $t_{21}$ 
(resp. by multiplication by $g_2$ from the left, 
${}^tg_1$ from the right 
and the scalar multiplication by $t_{22}$), 
$P_1(gx) = (\det g_1)(\det g_2) t_{21}t_{22}^2P_1(x)$. 
Let $P_2(x)$ be the polynomial such that 
\begin{equation*}
A(x) \wedge A(x) = P_2(x) \, \coorde_1\wedge \cdots \wedge \coorde_4.
\end{equation*}
$P_2(x)$ is the Pfaffian of $A(x)$ and 
$P_2(gx) = (\det g_2)t_{21}^2$.  

We put $P(x) = P_1(x)^3P_2(x)$. 
Then on $M^1_{\be_{13}}$,  $P(gx) = (\det g_2)t_{22}P(x)$. 
So, $P(x)$ is invariant under the action of $G_{\text{st},\be_{13}}$. 
Therefore, 
\begin{equation*}
Z^{\sst}_{\be_{13}\, k} = \{x\in Z_{\be_{13}\, k}\mid P_1(x),P_2(x)\not=0\}. 
\end{equation*}

Let $R(13)\in Z_{\be_{13}\, k}$ be the element such that 
$A(R(13))=e_{12}+e_{34}$, $v_1(R(13)) = \coorde_3$,
$v_2(R(13)) = \coorde_4$. 
\begin{prop}
$Y^{\sst}_{\be_{13}\, k} = P_{\be_{13}\, k}R(13)$. 
\end{prop}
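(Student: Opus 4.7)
The plan is to follow the two-step pattern established for $\be_6$, $\be_{11}$, $\be_{22}$, and $\be_{28}$ earlier in this section: first show that $Z^{\sst}_{\be_{13}\,k}$ is the single orbit $M_{\be_{13}\,k}R(13)$, and then use the unipotent radical $U_{\be_{13}}$ to absorb the $W_{\be_{13}}$-component of any element of $Y^{\sst}_{\be_{13}\,k}$.

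For Step 1, take $x\in Z^{\sst}_{\be_{13}\,k}$. Since $P_2(x)\not=0$, the alternating form $A(x)$ on $\aff^4$ is nondegenerate, so by Lemma \ref{lem:alernating-matrix} there exists $g_2\in\spl_4(k)\sub\gl_4(k)$ with $g_2A(x){}^tg_2$ of the shape $\alpha\,e_{12}+\beta\,e_{34}$ for some $\alpha,\beta\in\mk$. Using a diagonal element of $\gl_4(k)$ together with the scalar $t_{21}$ we further rescale so that $A(x)=e_{12}+e_{34}$. After this reduction $P_1(x)\not=0$ is equivalent to $\omega(v_1(x),v_2(x))\not=0$ where $\omega=e_{12}+e_{34}$. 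Rescaling by $t_{22}$ normalizes the pairing to $1$, and then the transitivity of $\spc_4(k)$ on ordered symplectic pairs with pairing $1$ (Witt's theorem for alternating forms) brings $(v_1,v_2)$ to $(\coorde_3,\coorde_4)$, giving $x=R(13)$. The factor $g_1\in\gl_2(k)$ provides redundancy here but is not required.

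For Step 2, let $x\in Y^{\sst}_{\be_{13}\,k}$ and by Step 1 write $x=(R(13),w)$ with $w=(x_{342},x_{352},x_{362},x_{452},x_{462},x_{562})\in W_{\be_{13}\,k}$. The unipotent radical $U_{\be_{13}}$ contributes an eight-dimensional $4\times 2$ block of parameters inside $\gl_6$ and one parameter inside $\gl_2$, totalling nine free parameters. Imposing suitable linear constraints on $u$ so that $n(u)$ fixes the $Z_{\be_{13}}$-coordinates of $R(13)$ and computing $n(u)(R(13),w)=(R(13),w'(u))$ directly, as in the proofs for $\be_6$, $\be_{22}$, $\be_{28}$, $\be_{40}$, $\be_{42}$ above, one sees that $w'$ is affine in $u$ and that six of the remaining unipotent parameters map surjectively onto $W_{\be_{13}\,k}$. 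Hence $u$ can be chosen to make $w'(u)=0$, completing the proof.

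The principal obstacle is Step 2: the constrained subfamily of $U_{\be_{13}}$ fixing the $Z$-part of $R(13)$ must be identified explicitly (the alternating form $A$, the vectors $v_1,v_2$, and the scalar entry in $\gl_2$ each impose conditions), and the resulting affine map onto $W_{\be_{13}\,k}$ must be checked to be surjective. The richer tensor structure of $R(13)$, with a rank-$4$ symplectic part together with two distinguished vectors, makes this bookkeeping longer than for the earlier base points, but the computation is mechanical and no new ideas are required.
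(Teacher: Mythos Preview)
Your overall two-step strategy matches the paper's, and your Step~1 is actually cleaner than the paper's: once $A(x)=e_{12}+e_{34}$, the paper reduces $(v_1,v_2)$ to $(\coorde_3,\coorde_4)$ by an explicit sequence of elements (permutations $\sig_1,\sig_2$ and specific unipotents $\nu_1(u),\nu_2(u)$ in the stabilizer of $A(x)$), whereas you invoke Witt transitivity of $\spc_4(k)$ on hyperbolic pairs in one stroke. Both are valid, but there is a slip in your normalization: rescaling by $t_{22}$ multiplies $\omega(v_1,v_2)$ by $t_{22}^2$, so over a non-square-closed $k$ you cannot normalize the pairing to $1$ this way. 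You do need $g_1\in\gl_2(k)$ here (acting by $(v_1\;v_2)\mapsto (v_1\;v_2){}^tg_1$, which scales $\omega(v_1,v_2)$ by $\det g_1$), contrary to your remark that $g_1$ is ``not required''; alternatively, skip the normalization and use $\spc_4(k)$ to send $(v_1,v_2)$ to $(c\,\coorde_3,\coorde_4)$, then clear $c$ with $g_1$.

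For Step~2 you stop short of the computation, and one remark is slightly off: you do not need to impose constraints on $u$ to fix the $Z_{\be_{13}}$-part of $R(13)$, since $U_{\be}$ always acts trivially on $Y_{\be}/W_{\be}$. The paper simply takes the six parameters $u_{131},u_{132},u_{141},u_{142},u_{152}$ (from the $\gl_6$ block) together with $u_{221}$ (from the $\gl_2$ block), computes $n(u)(R(13),w)=(R(13),w')$, and reads off
\[
x'_{342}=x_{342}+u_{221},\;\; x'_{352}=x_{352}+u_{131},\;\; x'_{362}=x_{362}+u_{132},\;\;
x'_{452}=x_{452}+u_{141},\;\; x'_{462}=x_{462}+u_{142},\;\; x'_{562}=x_{562}+u_{152}+u_{221},
\]
which is visibly solvable for $w'=0$. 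So your sketch is correct, but you should record this six-line computation rather than defer it.
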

\begin{proof}
We first prove that
$Z^{\sst}_{\be_{13}\, k} = M_{\be_{13}\, k}R(13)$. 
Let $x\in Z^{\sst}_{\be_{13}\, k}$. 
By Witt's theorem, we may assume that 
$A(x)=e_{12}+e_{34}$. 
Let 
\begin{equation*}
\tau_1= 
\begin{pmatrix}
0 & I_2 \\
I_2 & 0 
\end{pmatrix},\;
\tau_2=
\begin{pmatrix}
0 & 1 \\
-1 & 0 
\end{pmatrix},\;
\sig_1= (\diag(I_2,\tau_1),I_2),\;
\sig_2= (\diag(I_2,\tau_2,I_2),I_2). 
\end{equation*}
Then $\sig_1,\sig_2\in M_{[2],[1]}$ fix $A(x)$. 

By assumption, $v_1(x)\not=0$. By applying $\sig_1,\sig_2$
(multiple times) if necessary, 
we may assume that $x_{152}\not=0$. 
By applying an element of the form 
$(\diag(I_4,g),I_2)$ with $g\in \spl_2$, 
we may assume that $x_{152}=1,x_{162}=0$. 
For $u=(u_1,u_2,u_3)\in k^3$, let 
\begin{equation*}
\nu_1(u)  
=\begin{pmatrix}
1 & u_1 & 0 & u_2 \\
0 & 1 & 0 & 0 \\
0 & u_2 & 1 & u_3 \\
0 & 0 & 0 & 1 
\end{pmatrix},\;
\nu_2(u) 
= \begin{pmatrix}
1 & 0 & 0 & 0 \\
u_1 & 1 & u_2 & 0 \\
0 & 0 & 1 & 0 \\
u_2 & 0 & u_3 & 1 
\end{pmatrix}. 
\end{equation*}
Then $(\diag(I_2,\nu_1(u)),I_2),(\diag(I_2,\nu_2(u)),I_2)$ 
fix $A(x)$.  
By applying elements of the forms 
$(\diag(I_2,\nu_2(0,u_2,0)),I_2)$, 
$\sig_2(\diag(I_2,\nu_2(0,u_2,0)),I_2)\sig_2^{-1}$ 
if necessary, we may assume that $v_1(x)=\coorde_3$. 
 
By assumption, $x_{262}\not=0$. 
We may assume that $x_{262}=1$ 
by applying an element of the form 
$(\diag(I_2,t^{-1},1,t^{-1},1),tI_2)$. 
Elements of the forms 
$(\diag(I_2,\nu_1(u)),I_2)$, 
$\sig_2(\diag(I_2,\nu_1(u)),I_2)\sig_2^{-1}$ 
fix $A(x),\coorde_3$. 
Applying elements of these forms, 
we may assume that $x_{232},x_{242}=0$. 
By applying an element of the form 
$(\diag(I_4,{}^tn_2(u)),I_2)$, $v_2(x)$ becomes 
$\coorde_4$.

Suppose that $x\in Y^{\sst}_{\be_{13}\, k}$. By the above consideration,
we may assume that $x$ is is the form $(R(13),w)$ where 
$w=(x_{342},x_{352},x_{362},x_{452},x_{462},x_{562})$. 
For $u_1=(u_{1ij})_{1\leq j<i\leq 6}\in k^{15}$, $u_2 = u_{221}\in k$, 
let 
\begin{equation}
\label{eq:nu-62}
n(u)=n(u_1,u_2)=(n_6(u_1),n_2(u_2)).
\end{equation}
We assume that $u_{1ij}=0$ unless 
$(i,j)=(3,1),(3,2),(4,1),(4,2),(5,2)$. 
Then $n(u)\in U_{\be_{13}}$.
Let $n(u)(R(13),w)=(R(13),w')$. Then 
$w'=(x'_{342},x'_{352},x'_{362},x'_{452},x'_{462},x'_{562})$ where 
\begin{align*}
x'_{342} & = x_{342} + u_{221}, \quad
x'_{352} = x_{352} + u_{131}, \quad
x'_{362} = x_{362} + u_{132}, \\
x'_{452} & = x_{452} + u_{141}, \quad
 x'_{462} = x_{462} + u_{142}, \quad
x'_{562} = x_{562} + u_{152}+u_{221}.
\end{align*}
Therefore, there exists such $u$ such that $w'=(0,0,0,0,0,0)$. 
\end{proof}

(3) $\be_{18}=\tfrac {1} {12} (-1,-1,-1,-1,2,2,0,0)$. 


Note that $M_{[4],\emptyset}\cong \gl_4\times \gl_2^2$.
We identify the element 
$(\diag(g_{11},g_{12}),g_2)\in M_{[4],\emptyset}$
with $g=(g_{11},g_{12},g_2)\in \gl_4\times \gl_2^2$.
On $M^1_{\be_{18}}$, 
\begin{equation*}
\chi_{\be_{18}}(g)=(\det g_{11})^{-1}(\det g_{12})^2
= (\det g_{12})^3. 
\end{equation*}
%


Let 
\begin{equation*}
A(x) = \begin{pmatrix}
x_{151} & x_{152} & x_{161} & x_{162} \\
x_{251} & x_{252} & x_{261} & x_{262} \\ 
x_{351} & x_{352} & x_{361} & x_{362} \\ 
x_{451} & x_{452} & x_{461} & x_{462} 
\end{pmatrix}
\end{equation*}
We identify $Z_{\be_{18}}$ with $\m_4$ by the map $x\mapsto A(x)$. 
Let $P(x)=\det A(x)$. 
Then $P(gx)=(\det g_{11})(\det g_{12})^2(\det g_2)^2P(x)$. 
If $g\in M^1_{\be_{18}}$ then 
$P(gx)=\det g_{12}P(x)$. So $P(x)$ is invariant under the action 
of $G_{\text{st},\be_{18}}$. Therefore, 
\begin{equation*}
Z^{\sst}_{\be_{18}\, k} = \{x\in Z_{\be_{18}\, k}\mid P(x)\not=0\}. 
\end{equation*}

Let $R(18)\in Z^{\sst}_{\be_{18}\, k}$ be the element 
such that $A(R(18))=I_4$. 
Since $\{A\in \m_4(k)\mid \det A\not=0\}=\gl_4(k)I_4$,
$Z^{\sst}_{\be_{18}\, k}=M_{\be_{18}\, k}R(18)$. 

Suppose that $x\in Y^{\sst}_{\be_{18}\,k}$. 
By the above consideration, we may assume that 
$x=(R(18),w)$ where $w=(x_{561},x_{562})$.
Let $n(u)=(n_6(u_1),1)$ where $u_1=(u_{1ij})$
and $u_{1ij}=0$ unless $(i,j)=(5,3),(5,4)$. 
Then $n(u)\in U_{\be_{18}}$. 
Let $n(u)(R(18),w)=(R(18),w')$. Then 
$w'=(x'_{561},x'_{562})$ where
\begin{equation*}
x'_{561} = x_{561} + u_{153},\;
x'_{562} = x_{562} + u_{154}.
\end{equation*}
Therefore, there exists $u_{153},u_{154}\in k$ 
such that $w'=(0,0)$. 

(4) $\be_{35}=\tfrac {1} {6} (-2,0,0,0,1,1,0,0)$. 

Note that $M_{[1,4],\emptyset}\cong \gl_3\times \gl_2^2\times \gl_1$.
The element 
$(\diag(t_1,g_{11},g_{12}),g_2)\in M_{[1,4],\emptyset}$
is identified with $g=(g_{11},g_{12},g_2,t_1)\in \gl_3\times \gl_1^2\times \gl_1$.
The action of $M_{[1,4],\emptyset}$ on $Z_{\be_{35}}$ 
does not depend on $t_1$. 
On $M^1_{\be_{35}}$, 
$\chi_{\be_{35}}(g)=t_1^{-2}(\det g_{12})= (\det g_{11})^2(\det g_{12})^3$. 

We are in the situation of Lemma \ref{lem:Phi-equivariant} and 
so there is a map $\Phi:Z_{\be_{35}}\to \m_2$ 
such that 
\begin{equation*}
\Phi(gx)=(\det g_{11})(\det g_{12})(\det g_2)\theta(g_{12})\Phi(x)
{}^t\theta(g_2). 
\end{equation*}
Let $P(x)=\det \Phi(x)$. Then 
$P(gx)=(\det g_{11})^2(\det g_{12})^3(\det g_2)^3P(x)$. 
If $g\in M^1_{\be_{35}}$ then 
$P(gx)=(\det g_{11})^2(\det g_{12})^3P(x)$. 
So $P(x)$ is invariant under the action of 
$G_{\text{st},\be_{35}}$. 

Let $R(35)\in Z_{\be_{35}}$ be the element such that 
$R(35)=(x_{ijk})$ where 
\begin{equation*}
\begin{matrix}
x_{251} = -1, & x_{261} = 0, \\
x_{252} = 0, & x_{262} = 1,
\end{matrix}, \quad
\begin{matrix}
x_{351} = 0, & x_{361} = 1, \\
x_{352} = 0, & x_{362} = 0,
\end{matrix} \quad
\begin{matrix}
x_{451} = 0, & x_{461} = 0, \\
x_{452} = 1, & x_{462} = 0.
\end{matrix}
\end{equation*}
Then by (\ref{eq:RtoI}), $\Phi(R(35))=I_2$. 
As in the case (1) of Section \ref{sec:non-empty-strata1}, 
$Z^{\sst}_{\be_{35}\,k}=M_{\be_{35}\,k} R(35)$. 

\begin{prop}
$Y^{\sst}_{\be_{35}\, k} = P_{\be_{35}\, k}R(35)$. 
\end{prop}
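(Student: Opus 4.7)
\medskip
\noindent\textbf{Proof proposal.}
The plan is to mimic the strategy used for $\be_{13}$ and $\be_{18}$. Having already established that $Z^{\sst}_{\be_{35}\,k}=M_{\be_{35}\,k}R(35)$ via the Castling-type map $\Phi$ (together with the single-orbit result on $\{A\in\m_2(k)\mid \det A\neq 0\}$), I would begin with an arbitrary $x\in Y^{\sst}_{\be_{35}\,k}$ and use the natural embedding $Y^{\sst}_{\be_{35}\,k}\hookrightarrow Z^{\sst}_{\be_{35}\,k}\oplus W_{\be_{35}\,k}$: acting by a suitable element of $M_{\be_{35}\,k}\subset P_{\be_{35}\,k}$, I may assume $x=(R(35),w)$ with $w=(x_{561},x_{562})\in W_{\be_{35}\,k}$.

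The remaining task is to exhibit elements of $U_{\be_{35}}$ which translate the pair $(x_{561},x_{562})$ to $(0,0)$ while fixing $R(35)$. I would work with $n(u)=(n_6(u_1),I_2)$ as in $(\ref{eq:nu-62})$, restricting attention to those indices $u_{1ij}$ such that $n(u)$ lies in $U_{\be_{35}}$ (equivalently, such that the corresponding coordinate vectors have strictly greater weight with respect to $\be_{35}$). Since $\be_{35}=\tfrac{1}{6}(-2,0,0,0,1,1,0,0)$, the useful entries to vary are those $u_{1ij}$ with $i\in\{5,6\}$ and $j\in\{2,3,4\}$, since these increase the index on $\coorde_j$ into the $\{5,6\}$ block strictly.

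The key computation is to track, at first order in $u$, the induced change in $x_{561}$ and $x_{562}$ coming from the nonzero components of $R(35)$, namely $e_{251}$ (coefficient $-1$), $e_{361}$, $e_{262}$, and $e_{452}$. Acting by $n_6(u_1)$ on $\coorde_2\wedge\coorde_5$ produces a $\coorde_5\wedge\coorde_6$ contribution proportional to $u_{162}$, and acting on $\coorde_3\wedge\coorde_6$ produces one proportional to $u_{153}$; similarly, acting on $\coorde_2\wedge\coorde_6$ and $\coorde_4\wedge\coorde_5$ supplies translations in $x_{562}$ via $u_{152}$ or $u_{163}$. Thus with only two well-chosen $u_{1ij}$ turned on (and the rest zero, so all higher-order cross terms vanish), the map $u\mapsto w'$ becomes a linear isomorphism $k^2\to k^2$ on the relevant variables, allowing $w'=(0,0)$ to be achieved.

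The main obstacle will be verifying that the chosen $u_{1ij}$ actually lie in $U_{\be_{35}}$ (not merely in some larger unipotent radical) \emph{and} that the higher-order contributions from $n_6(u_1)$ acting on the other $R(35)$ components do not spoil $R(35)$ itself. Restricting to the two indices identified above and setting all other $u_{1ij}$ to zero should take care of both issues simultaneously, since the supports chosen do not mix back into the $Z_{\be_{35}}$ coordinates; but confirming this requires a careful bookkeeping of the wedge products, analogous to the table of $x'_{342},\dots,x'_{562}$ computed in the proof for $\be_{13}$.
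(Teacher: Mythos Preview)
Your proposal is correct and follows essentially the same approach as the paper: reduce to $x=(R(35),w)$ by the action of $M_{\be_{35}\,k}$, then kill $w=(x_{561},x_{562})$ by a carefully chosen $n(u)\in U_{\be_{35}}$ with only two entries of $u_1$ nonzero. The paper makes the specific choice $(i,j)=(5,2),(5,3)$ with $u_{221}=0$, obtaining $x_{561}'=x_{561}+u_{153}$ and $x_{562}'=x_{562}+u_{152}$, which is one of the pairs you identified; note only the small index slip that your candidate $u_{163}$ acting on $\coorde_4\wedge\coorde_5$ should be $u_{164}$, since $u_{163}$ moves $\coorde_3$, not $\coorde_4$.
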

\begin{proof}
Let $x\in Y^{\sst}_{\be_{35}\, k}$. By the above argument, 
we may assume that $x$ is in the form 
$x=(R(35),w)$ where $w=(x_{561},x_{562})$
Let $n(u)$ be the element (\ref{eq:nu-62})
where $u_{1ij}=0$ unless $(i,j)=(5,2),(5,3)$ and $u_{221}=0$. 
Let $n(u)x = (R(35),w')$. Then 
$w'=(x_{561}',x_{562}')$ where 
\begin{equation*}
x_{561}' = x_{561} + u_{153}, \quad
x_{562}' = x_{562} + u_{152}.
\end{equation*}
Therefore, there exist such $u$ such that $w'=(0,0)$.
\end{proof}

(5) $\be_{46}=\tfrac {1} {18} (-2,-2,0,0,2,2,-1,1)$.  

We identify the element 
$(\diag(g_{11},g_{12},g_{13}),\diag(t_{21},t_{22}))\in M_{[2,4],\emptyset}$
with 
\begin{equation*}
g=(g_{11},g_{12},g_{13},t_{21},t_{22})\in \gl_2^3\times \gl_1^2. 
\end{equation*}
On $M^1_{\be_{46}}$, 
\begin{equation*}
\chi_{\be_{46}}(g)=(\det g_{11})^{-2}(\det g_{13})^2t_{21}^{-1}t_{22}
= (\det g_{12})^2(\det g_{13})^4t_{22}^2.
\end{equation*}
%

For $x\in Z_{\be_{46}}$, let 
\begin{equation*}
A(x) = \begin{pmatrix}
x_{351} & x_{361} \\
x_{451} & x_{461}
\end{pmatrix}, \quad 
B(x) = \begin{pmatrix}
x_{152} & x_{162} \\
x_{252} & x_{262}
\end{pmatrix}.  
\end{equation*}
The map 
\begin{math}
Z_{\be_{46}}\ni x \mapsto (A(x),B(x),x_{342})
\in \m_2\oplus \m_2\oplus \aff^1
\end{math}
is an isomorphism. It is easy to see that 
$A(gx) = t_{21}g_{12}A(x){}^tg_{13}$, 
$B(gx) = t_{22}g_{11}B(x){}^tg_{13}$. 
Let 
\begin{equation*}
P_1(x)=\det A(x),\; P_2(x)=\det B(x),\;
P(x) = P_1(x)^2P_2(x)^2x_{342}. 
\end{equation*}
Then $P_1(gx)=t_{21}^2(\det g_{12})(\det g_{13})P_1(x)$, 
$P_2(gx)=t_{22}^2(\det g_{11})(\det g_{13})P_2(x)$. 
So for $g\in G_{\text{st},\be_{46}}$,  
$P(gx) = (\det g_{12})(\det g_{13})^2t_{22}P(x)$. 
Therefore, $P(x)$ is invariant under the action of $G_{\text{st},\be_{46}}$. 

Let $R(46)\in Z_{\be_{46}\, k}$ be the element such that 
$A(R(46))=B(R(46))=I_2$ and $x_{342}=1$. 
Then $P(R(46))=1$. Since $\gl_2^3$ is acting, 
it is easy to see that $Z_{\be_{46}\, k}^{\sst}=M_{\be_{46}\, k}R(46)$. 

\begin{prop}
$Y^{\sst}_{\be_{46}\, k} = P_{\be_{46}\, k}R(46)$. 
\end{prop}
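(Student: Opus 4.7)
The plan is to follow the template of the preceding propositions in this section. Let $x \in Y^{\sst}_{\be_{46}\,k}$. Using the already-established orbit equality $Z^{\sst}_{\be_{46}\,k} = M_{\be_{46}\,k}R(46)$, we may reduce to the case $x = (R(46), w)$ with $w = (x_{561}, x_{352}, x_{362}, x_{452}, x_{462}, x_{562}) \in W_{\be_{46}\,k}$.

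Elements of $U_{\be_{46}\,k}$ take the form $n(u) = (n_6(u_1), n_2(u_2))$ of (\ref{eq:nu-62}), with $u_{1ij}$ restricted to the strict lower block-triangular positions for the partition $6 = 2+2+2$ and $u_2 = u_{221}$. I plan to choose the six parameters $u_{131}, u_{132}, u_{141}, u_{142}, u_{152}, u_{163}$, setting the others (and $u_{221}$) to zero. For this choice, a direct computation of the action of $n(u)$ on the five basis vectors $e_{351}, e_{461}, e_{152}, e_{262}, e_{342}$ supporting $R(46)$, together with the action on the six basis vectors of $W_{\be_{46}\,k}$, should produce a system of the form
\begin{align*}
x'_{352} & = x_{352} + u_{131}, & x'_{362} & = x_{362} + u_{132}, \\
x'_{452} & = x_{452} + u_{141}, & x'_{462} & = x_{462} + u_{142} - u_{163}, \\
x'_{561} & = x_{561} - u_{163}, & x'_{562} & = x_{562} + u_{152} - u_{163}\,x_{352},
\end{align*}
which can be solved sequentially: choose $u_{163}$ from the $x'_{561}$ equation, then $u_{142}$ from $x'_{462}$, next $u_{131}, u_{132}, u_{141}$, and finally $u_{152}$. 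This gives $w' = 0$.

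The main point to verify carefully is that the above six parameters induce corrections only inside $W_{\be_{46}}$ when acting on $R(46)$, i.e.\ that no unintended change arises in the five nonzero coefficients of $R(46)$. This comes down to the observation that for each chosen $u_{1ij}$, the vector $\coorde_i$ (into which $\coorde_j$ is shifted) wedges with each of the partner vectors $\coorde_3, \coorde_4, \coorde_5, \coorde_6$ occurring in $R(46)$ to give a basis vector in $W_{\be_{46}}$, never one of $e_{351}, e_{461}, e_{152}, e_{262}, e_{342}$ themselves.
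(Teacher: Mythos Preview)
Your proof is correct and follows essentially the same template as the paper: reduce to $x=(R(46),w)$ via $M_{\be_{46}\,k}$, then kill $w$ using a suitable element of $U_{\be_{46}\,k}$. The only difference is the specific choice of unipotent parameters: the paper uses $u_{132},u_{141},u_{142},u_{161},u_{163}$ together with $u_{221}$ from the $\gl_2$ factor (so $u_{221}$ handles $x_{352}$ and $u_{161}$ handles $x_{562}$), whereas you stay entirely inside the $\gl_6$ factor and use $u_{131}$ for $x_{352}$ and $u_{152}$ for $x_{562}$. Both choices lie in $U_{\be_{46}}$ and both produce an invertible triangular system, so the argument goes through identically.
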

\begin{proof}
Let $x\in Y^{\sst}_{\be_{46}\, k}$. By the above argument, 
we may assume that $x$ is in the form 
$x=(R(46),w)$ where $w=(x_{561},x_{352},x_{362},x_{452},x_{462},x_{562})$. 
Let $n(u)$ be the element (\ref{eq:nu-62}). 
We assume that $u_{1ij}=0$ unless 
$(i,j)=(3,2),(4,1),(4,2),(6,1),(6,3)$. 
Then $n(u)\in U_{\be_{46}\,k}$. 
Let $n(u)x = (R(46),w')$. Then 
$w'=(x'_{561},x'_{352},x'_{362},x'_{452},x'_{462},x'_{562})$ 
where 
\begin{align*}
x'_{561} & = x_{561} - u_{163}, \;
x'_{352} = x_{351} + u_{221}, \;
x'_{362} = x_{362} + u_{132}, \\
x'_{452} & = x_{452} + u_{141}, \;
x'_{462} = x_{462} + u_{221} + u_{142} - u_{163}, \;
x'_{562} = x_{562} - u_{163}u_{221} - u_{161}. 
\end{align*}
Then it is easy to see that there exists $u$ such that
$w'=(0,0,0,0,0,0)$. 
\end{proof}

(6) $\be_{66}=\tfrac {1} {6} (-1,-1,0,0,1,1,-1,1)$.  

Note that $M_{[2,4],[1]}\cong \gl_2^3\times \gl_1^2$.
The element $(\diag(g_{11},g_{12},g_{13}),\diag(t_{21},t_{22}))\in M_{[2,4],[1]}$
is identified with 
$g=(g_{11},g_{12},g_{13},t_{21},t_{22})\in \gl_2^3\times \gl_1^2$.
On $M^1_{\be_{66}}$, 
\begin{equation*}
\chi_{\be_{66}}(g)=(\det g_{11})^{-1}(\det g_{13})t_{21}^{-1}t_{22}
= (\det g_{12})(\det g_{13})^2t_{22}^2.
\end{equation*}
%


Let 
\begin{equation*}
A(x) = \begin{pmatrix}
x_{152} & x_{162} \\
x_{252} & x_{262}
\end{pmatrix}, \;
P(x)=(\det A(x))x_{561}^2x_{342}^2.
\end{equation*}
%
Since $A(gx)= t_{22} g_{11} A(x) {}^t g_{13}$, 
$P(gx) = t_{21}^2t_{22}^4(\det g_{11})(\det g_{12})^2(\det g_{13})^3$. 
If $g\in M^1_{\be_{66}}$ then 
$P(gx) = t_{22}^2(\det g_{12})(\det g_{13})^2$. 
So $P(x)$ is invariant under the action of $G_{\text{st},\be_{66}}$. 

Let $R(66)\in Z_{\be_{66}\, k}$ be the element such that 
$A(R(66))=I_2$ and $x_{561},x_{342}=1$. 
Then $P(R(66))=1$. It is easy to see that 
$Z_{\be_{66}\, k}^{\sst}=M_{\be_{66}\, k}R(66)$. 

\begin{prop}
$Y^{\sst}_{\be_{66}\, k} = P_{\be_{66}\, k}R(66)$. 
\end{prop}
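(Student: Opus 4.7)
The plan is to follow the template of the propositions for $\be_{13}$, $\be_{18}$, $\be_{35}$, and $\be_{46}$. Since we have already established that $Z^{\sst}_{\be_{66}\,k}=M_{\be_{66}\,k}R(66)$, and since $M_{\be_{66}\,k}\subset P_{\be_{66}\,k}$, any $x\in Y^{\sst}_{\be_{66}\,k}$ may be first normalized by an element of $M_{\be_{66}\,k}$ so that its $Z_{\be_{66}}$--component equals $R(66)$. Hence we may assume that $x=(R(66),w)$ with $w=(x_{352},x_{362},x_{452},x_{462},x_{562})$ listing the $W_{\be_{66}}$--coordinates.

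Next I would apply an element of the unipotent radical $U_{\be_{66}\,k}$. Writing $n(u)=(n_6(u_1),n_2(u_{221}))$ as in (\ref{eq:nu-62}), the membership in $U_{\be_{66}}$ forces $u_{1ij}=0$ unless $(i,j)$ lies in one of the strictly lower block positions of the partition $[2,4]$ of $\aff^6$ (that is, with $i,j$ in different blocks of $\{1,2\}$, $\{3,4\}$, $\{5,6\}$). I would then restrict attention to the five parameters $u_{131},u_{132},u_{141},u_{142},u_{152}$, setting all other $u_{1ij}$ and $u_{221}$ to zero. These five parameters are chosen so that the action of $n(u)$ fixes each nonzero coordinate of $R(66)$ (namely $x_{152},x_{262},x_{342},x_{561}$), because their shifts only produce $\coorde_i$--components lying in strictly higher--index basis vectors, landing entirely within $W_{\be_{66}}$.

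A direct linear computation then shows that $n(u)(R(66),w)=(R(66),w')$ with
\begin{equation*}
x'_{352}=x_{352}+u_{131},\quad
x'_{362}=x_{362}+u_{132},\quad
x'_{452}=x_{452}+u_{141},
\end{equation*}
\begin{equation*}
x'_{462}=x_{462}+u_{142},\quad
x'_{562}=x_{562}+u_{152}.
\end{equation*}
Choosing $u_{131},u_{132},u_{141},u_{142},u_{152}$ to be the negatives of the corresponding coordinates of $w$ gives $w'=0$, which yields $x\in P_{\be_{66}\,k}R(66)$ and hence $Y^{\sst}_{\be_{66}\,k}=P_{\be_{66}\,k}R(66)$.

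The only subtlety, and what I would verify carefully, is the claim that these five chosen shifts of $n(u)$ really do preserve $R(66)$ exactly (rather than perturbing, say, $x_{162}$ or $x_{252}$ into nonzero values). This follows because each of the relevant terms of $R(66)$ is either of the form $\coorde_a\wedge\coorde_b\otimes\coordf_2$ with $b\in\{5,6\}$ (so that the chosen $u_{1ij}$, being supported on $i\in\{3,4,5,6\}$ and $j\in\{1,2\}$, introduce only higher--block components in the left factor) or lies in $\coorde_3\wedge\coorde_4$ or $\coorde_5\wedge\coorde_6$ (which are unaffected by these particular parameters). This step is routine but is the one place where a careful computation is required, and it is the analogue of the corresponding verification in the proof for $\be_{46}$.
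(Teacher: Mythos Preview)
Your proof is correct and follows exactly the same approach as the paper: reduce to $x=(R(66),w)$ using $Z^{\sst}_{\be_{66}\,k}=M_{\be_{66}\,k}R(66)$, then apply $n(u)\in U_{\be_{66}\,k}$ with only $u_{131},u_{132},u_{141},u_{142},u_{152}$ nonzero (and $u_{221}=0$), obtaining the identical transition formulas for $w'$. The paper's argument is the same, only omitting your closing verification that $R(66)$ is preserved.
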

\begin{proof}
Let $x\in Y^{\sst}_{\be_{66}\, k}$. By the above argument, 
we may assume that $x$ is in the form 
$x=(R(66),w)$ where $w=(x_{352},x_{362},x_{452},x_{462},x_{562})$. 

Let $n(u)$ be the element (\ref{eq:nu-62}). 
We assume that $u_{1ij}=0$ unless
$(i,j)=(3,1),(3,2)$, $(4,1),(4,2),(5,2)$ 
and that $u_{221}=0$. 
Then $n(u)\in U_{\be_{66}\,k}$. 
Let $n(u)x = (R(66),w')$. Then 
$w'=(x'_{352},x'_{362},x'_{452},x'_{462},x'_{562})$ 
where 
\begin{align*}
x'_{352} & = x_{352} + u_{131}, \;
x'_{362} = x_{362} + u_{132}, \;
x'_{452} = x_{452} + u_{141}, \\
x'_{462} & = x_{462} + u_{142}, \;
x'_{562} = x_{562} + u_{152}. 
\end{align*}
It is easy to see that there exists $u$ such that
$w'=(0,0,0,0,0)$. 
\end{proof}

(7) $\be_{67}=\tfrac {1} {6} (-2,-2,1,1,1,1,0,0)$. 

Note that $M_{[2],\emptyset}\cong \gl_4\times \gl_2^2$.
We identify the element 
$(\diag(g_{11},g_{12}),g_2)\in M_{[2],\emptyset}$
with $g=(g_{12},g_{11},g_2)\in \gl_4\times \gl_2^2$.
On $M^1_{\be_{67}}$, 
\begin{equation*}
\chi_{\be_{67}}(g)=(\det g_{11})^{-2}(\det g_{12})
= (\det g_{12})^3. 
\end{equation*}
%


The space $Z_{\be_{67}}$ can be identified with 
$\wedge^2 \aff^4\otimes \aff^2$ where $g\in M_{\be_{67}}$
acts by 
\begin{equation*}
\wedge^2 \aff^4\otimes \aff^2 \ni A\otimes v
\mapsto (g_{12} A {}^t g_{12})\otimes g_2v 
\in \wedge^2 \aff^4\otimes \aff^2. 
\end{equation*}

This \rep{} was considered in \S 4 \cite{wryu} and 
the set of rational orbits is in bijective correspondence
with $\Ex_2(k)$. Note that there was an assumption on 
$\ch(k)$ in \cite{wryu}, but it is not necessary. 
The point is that one can verify 
that the \rep{} is regular in the sense of Definition 2.1 
\cite[p.310]{kato-yukie-jordan} by simple Lie algebra computations. 
There is a polynomial $P(x)$ of degree $4$ such that 
$P(gx)=(\det g_{12})^2(\det g_2)^2 P(x)$. 
If $g\in M^1_{\be_{67}}$ then $P(gx)=(\det g_{12})^2 P(x)$. 
So $P(x)$ is invariant under the action of $G_{\text{st},\be_{67}}$. 
As we pointed above, 
$M_{\be_{67}\,k}\backslash Z^{\sst}_{\be_{67}\,k}$ 
is in bijective correspondence with $\Ex_2(k)$. 
Since $W_{\be_{67}}=\{0\}$, 
$P_{\be_{67}\,k}\backslash Y^{\sst}_{\be_{67}\,k}$ 
is also in bijective correspondence with $\Ex_2(k)$.

(8) $\be_{74}=\tfrac {1} {6} (-2,-2,0,0,2,2,-1,1)$.  

Note that $M_{[2,4],[1]}\cong \gl_2^3\times \gl_1^2$.
The element 
$(\diag(g_{11},g_{12},g_{13}),t_{21},t_{22})\in M_{[2,4],[1]}$
is identified with 
$g=(g_{11},g_{12},g_{13},t_{21},t_{22})\in \gl_2^3\times \gl_1^2$.
On $M^1_{\be_{74}}$, 
\begin{equation*}
\chi_{\be_{74}}(g)=(\det g_{11})^{-2}(\det g_{13})^2 t_{21}^{-1}t_{22}
= (\det g_{12})^2(\det g_{13})^4 t_{22}^2.
\end{equation*}
%


Let 
\begin{equation*}
A(x) = 
\begin{pmatrix}
x_{352} & x_{362} \\
x_{452} & x_{462}
\end{pmatrix},\;
P(x)=x_{561}\det A(x). 
\end{equation*}
%
Since $A(gx)=t_{22}g_{12}A(x){}^t g_{13}$, 
$P(x) = t_{21}t_{22}^2(\det g_{12})(\det g_{13})^2$. 
If $g\in M^1_{\be_{74}}$ then 
$P(x) = t_{22}(\det g_{12})(\det g_{13})^2$. 
So $P(x)$ is invariant under the action of $G_{\text{st},\be_{74}}$. 

Let $R(74)\in Z^{\sst}_{\be_{74}\,k}$ be the element such that 
$A(R(74))=I_2$ and $x_{561}=1$. It is easy to see that 
$Z_{\be_{74}\, k}^{\sst}=M_{\be_{74}\, k}R(74)$. 

\begin{prop}
$Y^{\sst}_{\be_{74}\, k} = P_{\be_{74}\, k}R(74)$. 
\end{prop}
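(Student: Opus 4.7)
The plan is to follow the template of the preceding propositions in this section. Given $x \in Y^{\sst}_{\be_{74}\, k}$, I would first invoke the $M_{\be_{74}\, k}$-action, which was just shown to be transitive on $Z^{\sst}_{\be_{74}\, k}$, to reduce to the case $x = (R(74), x_{562} e_{562})$, since $W_{\be_{74}}$ is one-dimensional and spanned by $e_{562}$. The remaining task is to exhibit a single element of $U_{\be_{74}\, k}$ that kills $x_{562}$ without disturbing $R(74)$.

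The natural candidate is $n(u) = (I_6, n_2(u_{221}))$, which lies in the unipotent radical of $P_{2,[1]}$ and hence in $U_{\be_{74}\, k}$. The key observation is that $n_2(u_{221})$ acts on $\aff^2$ by $\coordf_1 \mapsto \coordf_1 + u_{221} \coordf_2$ and $\coordf_2 \mapsto \coordf_2$; equivalently, $e_{ij1} \mapsto e_{ij1} + u_{221} e_{ij2}$ while each $e_{ij2}$ is fixed. Since $R(74) = e_{561} + e_{352} + e_{462}$ contains exactly one term of type $e_{ij1}$, namely $e_{561}$, the element $n(u)$ sends $(R(74), x_{562} e_{562})$ to $(R(74), (x_{562} + u_{221}) e_{562})$. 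Choosing $u_{221} = -x_{562}$ then completes the reduction.

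There is essentially no obstacle in this case. The one-dimensionality of $W_{\be_{74}}$ and the presence of a single $e_{ij1}$-term in $R(74)$ conspire to make the scalar parameter in the unipotent radical of the $\gl_2$-factor exactly sufficient, in marked contrast to the earlier propositions where several parameters of $U_{\be_i}$ had to be coordinated to match the larger dimension of $W_{\be_i}$.
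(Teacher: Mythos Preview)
Your proof is correct and is exactly the argument the paper gives: reduce to $x=(R(74),x_{562})$ by transitivity of $M_{\be_{74}\,k}$ on $Z^{\sst}_{\be_{74}\,k}$, then apply $(I_6,n_2(-x_{562}))\in U_{\be_{74}\,k}$ to eliminate the $W_{\be_{74}}$-component. Your explanation of why this unipotent element fixes $R(74)$ (only $e_{561}$ among the summands involves $\coordf_1$, and it is shifted precisely into $e_{562}$) is in fact more detailed than the paper's one-line computation.
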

\begin{proof}
Let $x\in Y^{\sst}_{\be_{74}\, k}$. By the above argument, 
we may assume that $x$ is in the form 
$x=(R(74),x_{562})$. Then $(I_6,n_2(-x_{562})) x = (R(74),0)$. 
\end{proof}

(9)  $\be_{75}=\tfrac {1} {30} (-10,-4,2,2,2,8,-3,3)$.  

Note that $M_{[1,2,5],[1]}\cong \gl_3\times \gl_1^5$.
The element 
$(\diag(t_{11},t_{12},g_1,t_{13}),\diag(t_{21},t_{22}))$ $\in M_{[1,2,5],[1]}$
is identified with 
$g=(g_1,t_{11},t_{12},t_{13},t_{21},t_{22})\in \gl_3\times \gl_1^5$.
On $M^1_{\be_{75}}$, 
\begin{equation*}
\chi_{\be_{75}}(g)=t_{11}^{-10}t_{12}^{-4}(\det g_1)^2t_{13}^8 t_{21}^{-3}t_{22}^3
= t_{12}^6 t_{13}^{18}t_{22}^6(\det g_1)^{12}. 
\end{equation*}
%


Let $\{\coorde_1,\coorde_2,\coorde_3\}$ be the standard basis 
of $\aff^3$ and 
\begin{equation*}
A(x) = x_{361}\,\coorde_1+ x_{461}\,\coorde_2+x_{561}\,\coorde_3,\;
B(x) = x_{342}\,\coorde_1\wedge \coorde_2+ x_{352}\,\coorde_1\wedge \coorde_3
+ x_{452}\, \coorde_2\wedge \coorde_3.
\end{equation*}
Let $P_1(x)$ be the polynomial such that 
$A(x)\wedge B(x)=P_1(x)\, \coorde_1\wedge \coorde_2\wedge \coorde_3$. 
Then 
\begin{equation*}
P_1(gx) = t_{13} t_{21}t_{22} \det g_1 P_1(x). 
\end{equation*}
We put $P(x) = P_1(x)^2x_{262}$. 
Then $P(gx) = t_{12}t_{13}^3 t_{21}^2t_{22}^3 (\det g_1)^2 P(x)$. 
If $g\in M^1_{\be_{75}}$ then 
$P(gx) = t_{12}t_{13}^3 t_{22} (\det g_1)^2 P(x)$. 
So $P(x)$ is invariant under the action of $G_{\text{st},\be_{75}}$. 

Let $R(75)\in Z^{\sst}_{\be_{75}\,k}$ be the element such that 
$A(R(75))= \coorde_1,\; B(R(75))=\coorde_2\wedge \coorde_3$ 
and $x_{262}=1$. 

\begin{prop}
$Y^{\sst}_{\be_{75}\, k} = P_{\be_{75}\, k}R(75)$. 
\end{prop}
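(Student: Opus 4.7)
The plan is to follow the two-step pattern of the preceding propositions: first establish that $Z^{\sst}_{\be_{75}\,k}=M_{\be_{75}\,k}\,R(75)$ is a single $M_{\be_{75}\,k}$-orbit, then use an element of $U_{\be_{75}\,k}$ to kill the $W_{\be_{75}}$-coordinates of an element whose $Z$-part has been normalized to $R(75)$. The key geometric point is that $M_{\be_{75}}$ contains a full $\gl_3$-factor acting on the coordinates $\coorde_3,\coorde_4,\coorde_5$ of $\aff^6$, with which one can manipulate both $A(x)\in\aff^3$ (by the standard representation) and $B(x)\in\wedge^2\aff^3$ (by its $\wedge^2$-representation).

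For Step 1, let $x\in Z^{\sst}_{\be_{75}\,k}$, so $P_1(x)\ne 0$ and $x_{262}\ne 0$. I would first rescale $x_{262}$ to $1$ via $t_{22}$, and then use the $\gl_3$-factor together with the scalars $t_{13},t_{21}$ to send $A(x)$ to $\coorde_1$. The residual stabilizer of $\coorde_1$ inside $\gl_3$ is a parabolic subgroup with Levi $\gl_1\times\gl_2$, whose $\gl_2$-block rescales the $\coorde_2\wedge\coorde_3$-coefficient of $B(x)$ by $\det$ and whose unipotent radical adds multiples of $\coorde_1\wedge\coorde_2,\coorde_1\wedge\coorde_3$ to $B(x)$. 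Since $P_1(x)\ne 0$ forces the $\coorde_2\wedge\coorde_3$-coefficient of $B(x)$ to be nonzero, combining this action with the remaining torus freedom normalizes $B(x)$ to $\coorde_2\wedge\coorde_3$, proving $x\in M_{\be_{75}\,k}\,R(75)$.

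For Step 2, after applying a suitable $m\in M_{\be_{75}\,k}$, I would write $x=(R(75),w)$ with $w=(x_{362},x_{462},x_{562})$ and apply $n(u)=(n_6(u_1),n_2(u_2))\in U_{\be_{75}\,k}$ as in \eqref{eq:nu-62} with all entries zero except $u_{132},u_{142},u_{152}$ (and $u_{221}=0$). These positions are admissible since they lie strictly off the block-diagonal of $P_{[1,2,5]}$. A short calculation shows that only $n(u)\cdot e_{262}=(\coorde_2+u_{132}\coorde_3+u_{142}\coorde_4+u_{152}\coorde_5)\wedge\coorde_6\otimes\coordf_2$ contributes to the $W$-coordinates, so $n(u)(R(75),w)=(R(75),w')$ with
\begin{equation*}
x'_{362}=x_{362}+u_{132},\quad x'_{462}=x_{462}+u_{142},\quad x'_{562}=x_{562}+u_{152}.
\end{equation*}
Choosing $u_{132}=-x_{362}$, $u_{142}=-x_{462}$, $u_{152}=-x_{562}$ eliminates $w$. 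The main delicate point is the transitivity claim in Step 1: one must verify that after the successive normalizations of $x_{262}$ and $A(x)$, the group elements still available inside $M_{\be_{75}\,k}$ are sufficient to act transitively on the three coefficients of $B(x)$, which comes down to careful bookkeeping of the torus characters $t_{11},t_{12},t_{13},t_{21},t_{22}$ against the parabolic stabilizer of $\coorde_1\in\aff^3$ inside $\gl_3$.
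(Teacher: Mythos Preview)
Your proof is correct and follows essentially the same two-step strategy as the paper: normalize the $Z$-part to $R(75)$ using the $\gl_3$-action and the stabilizer of $\coorde_1$, then kill the $W$-coordinates with a unipotent element. The only notable difference is in Step~2: you use the entries $u_{132},u_{142},u_{152}$ (positions $(3,2),(4,2),(5,2)$ of $n_6(u_1)$), which act through $e_{262}$, whereas the paper uses $u_{164},u_{165}$ together with $u_{221}$, which act through $e_{452}$ and $e_{361}$; both choices lie in $U_{\be_{75}}$ and work equally well.
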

\begin{proof}
Let $x\in Z^{\sst}_{\be_{75}\, k}$. We may assume that 
$A(x)=\coorde_1$. Since $A(x)\wedge B(x)\not=0$, 
$x_{452}\not=0$. Applying an element of the form 
$(\diag(1,1,1,t,1,1),I_2)$ ($t\in\mk$), we may 
assume that $x_{452}=1$. 
Let $u=(u_{ij})_{1\leq j<i\leq 6}$
where $u_{ij}=0$ unless $(i,j)=(4,3),(5,3)$ 
and $u_{43}=-x_{352},u_{53}=x_{342}$. Then 
$({}^tn_6(u),I_2)\in M_{\be_{75}\, k}$ and 
$({}^tn_6(u),I_2)x=R(75)$. 
This implies that $Z^{\sst}_{\be_{75}\, k} = M_{\be_{75}\, k}R(75)$. 

So we may assume that $x$ is in the form $(R(75),w)$ 
where $w=(x_{362},x_{462},x_{562})$. 
Let $n(u)$ be the element (\ref{eq:nu-62})
where $u_{1ij}=0$ unless $(i,j)=(6,4),(6,5)$. 
Then $n(u)\in U_{\be_{75}\,k}$. 
Let $n(u)x = (R(75),w')$. Then 
$w'=(x_{362}',x_{462}',x_{562}')$ where 
\begin{equation*}
x_{362}' = x_{362} + u_{221}, \quad
x_{462}' = x_{462} + u_{165}, \quad
x_{562}' = x_{562} - u_{164}. 
\end{equation*}
Therefore, there exists $u$ such that 
$w'=(0,0,0)$. 
\end{proof}

(10) $\be_{76}=\tfrac {1} {30} (-1,-1,-1,-1,2,2,-3,3)$. 

Note that $M_{[4],[1]}\cong \gl_4\times \gl_2\times \gl_1^2$.
The element 
$(\diag(g_{11},g_{12}),\diag(t_{21},t_{22}))\in M_{[4],[1]}$
is identified with 
$g=(g_{11},g_{12},t_{21},t_{22})\in \gl_4\times \gl_2\times \gl_1^2$.
On $M^1_{\be_{76}}$, 
\begin{equation*}
\chi_{\be_{76}}(g)=(\det g_{11})^{-1}(\det g_{12})^2t_{21}^{-3}t_{22}^3
= t_{22}^6 (\det g_{12})^3. 
\end{equation*}
%


Let 
\begin{equation*}
A(x) = 
\begin{pmatrix}
0 & x_{122} & x_{132} & x_{142} \\
-x_{122} & 0 & x_{232} & x_{242} \\
-x_{132} & -x_{232} & 0 & x_{342} \\
-x_{142} & -x_{242} & -x_{342} & 0 
\end{pmatrix}.
\end{equation*}
Then $A(gx) = t_{22} g_{11}A(x){}^t g_{11}$.
We identify $Z_{\be_{76}}$ with 
$\wedge^2 \aff^4\oplus \aff^1$ by the map 
$x\mapsto (A(x),x_{561})$. 
Let $P_1(x)$ be the Pfaffian of $A(x)$.  
Then $P_1(gx)= t_{22}^2 (\det g_{11}) P_1(x)$. 
We put $P(x)=x_{561}^4 P_1(x)^3$. Then 
$P(gx) = t_{21}^4 t_{22}^6 (\det g_{11})^3 (\det g_{12})^4P(x)$. 
If $g\in M^1_{\be_{76}}$ then
$P(gx) = t_{22}^2 (\det g_{12})P(x)$.  
So $P(x)$ is invariant under the action of $G_{\text{st},\be_{76}}$. 

Let $R(76)\in Z^{\sst}_{\be_{76}\,k}$ be the element such that 
$x_{561},x_{122},x_{342}=1$ and other coordinates are $0$. 
(Note that $P(R(76))=1$). By Witt's theorem, 
$Z^{\sst}_{\be_{76}\,k}=M_{\be_{76}\,k}R(76)$. 

\begin{prop}
$Y^{\sst}_{\be_{76}\, k} = P_{\be_{76}\, k}R(76)$. 
\end{prop}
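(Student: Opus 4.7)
My plan is to follow the template of the preceding propositions in this family. Having already established
$Z^{\sst}_{\be_{76}\, k} = M_{\be_{76}\, k}R(76)$ via Witt's theorem applied to the alternating
$4 \times 4$ matrix $A(x)$ (with the scalar coordinate $x_{561}$ normalized independently by
the central torus of $M_{\be_{76}}$), the forward inclusion
$Y^{\sst}_{\be_{76}\, k} \subseteq P_{\be_{76}\, k}R(76)$ proceeds in two steps: first I would
use $M_{\be_{76}\, k} \subseteq P_{\be_{76}\, k}$ to reduce an arbitrary
$x \in Y^{\sst}_{\be_{76}\, k}$ to the form $(R(76), w)$ with $w \in W_{\be_{76}\, k}$;
then I would exhibit an element $n(u) \in U_{\be_{76}\, k}$ sending this to $R(76)$.
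The reverse inclusion is immediate from $P_{\be_{76}}$-invariance of $Y^{\sst}_{\be_{76}}$
and the fact $R(76) \in Y^{\sst}_{\be_{76}\, k}$.

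For the second step, I would parametrize $U_{\be_{76}\, k}$ by
$n(u) = (n_6(u_1), n_2(u_2))$ as in (\ref{eq:nu-62}), with $u_{1ij} = 0$ unless
$i \in \{5,6\}$ and $j \in \{1,2,3,4\}$. The resulting eight $u_{1ij}$ together with
$u_{221}$ give nine free parameters, matching $\dim W_{\be_{76}} = 9$. Since $U_{\be_{76}}$
strictly raises weights outside of $Z_{\be_{76}}$, the $Z$-component of $n(u)x$ remains
$R(76)$, so $n(u)x = (R(76), w')$, and I only need to solve $w' = 0$ for $u$.

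A direct wedge-product calculation starting from $R(76) = e_{561} + e_{122} + e_{342}$,
using $n_6(u_1)\coorde_j = \coorde_j + u_{1,5j}\coorde_5 + u_{1,6j}\coorde_6$ for $j \le 4$
together with the $u_{221}$-action on $e_{561}$, should show that each of the eight
$u_{1ij}$ appears linearly with coefficient $\pm 1$ in exactly one of the coordinates
$x'_{152}, x'_{162}, x'_{252}, x'_{262}, x'_{352}, x'_{362}, x'_{452}, x'_{462}$,
while $u_{221}$ appears linearly with coefficient $+1$ in $x'_{562}$ (together with a
polynomial in the remaining variables and the original $W$-coordinates). Hence the first
eight linear equations solve for the $u_{1ij}$ uniquely in terms of $w$, after which
$u_{221}$ is uniquely determined to make $x'_{562} = 0$.

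The only piece of bookkeeping requiring care is the wedge-product expansion of
$n_6(u_1)((\coorde_1 \wedge \coorde_2) \otimes \coordf_2)$ and of
$n_6(u_1)((\coorde_3 \wedge \coorde_4) \otimes \coordf_2)$; independence of the eight
linear equations is immediate once one notes that these two terms involve disjoint subsets
of the $u_{1ij}$ (namely $j \in \{1,2\}$ and $j \in \{3,4\}$ respectively). No genuinely
new idea beyond those used in Propositions for $\be_{13}$, $\be_{18}$, $\be_{35}$, and
$\be_{46}$ is needed.
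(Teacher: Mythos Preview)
Your proposal is correct and follows essentially the same approach as the paper's proof: reduce to $(R(76),w)$ via $M_{\be_{76}\,k}$, then kill $w$ using $n(u)\in U_{\be_{76}\,k}$ with the eight parameters $u_{15j},u_{16j}$ ($j=1,\ldots,4$) together with $u_{221}$. The paper performs the same wedge computation explicitly and handles $x'_{562}$ in a second step after first zeroing $x'_{152},\ldots,x'_{462}$; your one-step formulation (solve for the $u_{1ij}$ first, then for $u_{221}$) is equivalent, and your remark that $x'_{562}$ also picks up cross-terms involving the original $W$-coordinates is in fact slightly more precise than the paper's displayed formula, though the two-step argument makes this immaterial.
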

\begin{proof}
Suppose that $x\in Y^{\sst}_{\be_{76}\,}$.  
We may assume that $x$ is in the form 
$x=(R(76),w)$ where 
$w=(x_{152},x_{162},x_{252},x_{262},x_{352},x_{362},x_{452},x_{462},x_{562})$. 

Let $n(u)$ be the element (\ref{eq:nu-62})
where $u_{1ij}=0$ unless $i=5,6, j=1\ccd 4$. 
Then $n(u)\in U_{\be_{76}\,k}$. 
Let $n(u)x = (R(76),w')$. Then 
\begin{equation*}
w'=(x'_{152},x'_{162},x'_{252},x'_{262},x'_{352},x'_{362},x'_{452},x'_{462},x'_{562})
\end{equation*}
where 
\begin{align*}
x'_{152} & = x_{152} + u_{152}, \;  
x'_{162} = x_{162} + u_{162}, \;
x'_{252} = x_{252} - u_{151}, \\
x'_{262} & = x_{262} - u_{161}, \;
x'_{352} = x_{352} + u_{154}, \;
x'_{362} = x_{362} + u_{164}, \\
x'_{452} & = x_{452} -u_{153}, \;
x'_{462} = x_{462} -u_{163}, \\
x'_{562} & = x_{562} + u_{221} + u_{151}u_{162}-u_{161}u_{152}
+ u_{153}u_{164}-u_{163}u_{154}.
\end{align*}
By $u_{151}\ccd u_{154},u_{161}\ccd u_{164}$, 
we can make $x'_{152}\ccd x'_{462}=0$. 
Then assuming $u_{151}\ccd u_{154},u_{161}\ccd u_{164}=0$, 
we make $x'_{562}=0$ using $u_{221}$. 
\end{proof}

(11) $\be_{78}=\tfrac {1} {6} (-2,-2,1,1,1,1,-3,3)$.  

Note that $M_{[2],[1]}\cong \gl_4\times \gl_2\times \gl_1^2$.
The element 
$(\diag(g_{11},g_{12}),\diag(t_{21},t_{22}))\in M_{[2],[1]}$
is identified with 
$g=(g_{12},g_{11},t_{21},t_{22})\in \gl_4\times \gl_2\times \gl_1^2$.
On $M^1_{\be_{78}}$, 
\begin{equation*}
\chi_{\be_{78}}(g)=(\det g_{11})^{-2}(\det g_{12})t_{21}^{-3}t_{22}^3
= t_{22}^6 (\det g_{12})^3. 
\end{equation*}
%


We identify $Z_{\be_{78}}$ with
$\wedge^2 \aff^4$. Let $P(x)$ be the
Pfaffian of $x$ as an element of $\wedge^2 \aff^4$. 
Then $P(gx) = t_{22}^2(\det g_{12})P(x)$. 
So $P(x)$ is invariant under the action of $G_{\text{st},\be_{78}}$. 
Let $R(78)\in Z^{\sst}_{\be_{78}\,k}$ be the 
element such that $x_{342},x_{562}=1$ and other coordinates are $0$.
By Witt's theorem, $Z^{\sst}_{\be_{78}\,k}=M_{\be_{78}\,k}R(78)$. 
Since $W_{\be_{78}}=\{0\}$, 
$Y^{\sst}_{\be_{78}\,k}=P_{\be_{78}\,k}R(78)$ also.

(12) $\be_{80}=\tfrac {1} {6} (-2,-2,-2,1,1,4,0,0)$.  

Note that $M_{[3,5],\emptyset}\cong \gl_3\times \gl_2^2\times \gl_1$.
The element 
$(\diag(g_{11},g_{12},t_1),g_2)\in M_{[3,5],\emptyset}$
is identified with 
$g=(g_{11},g_{12},g_2,t_1)\in \gl_3\times \gl_2^2\times \gl_1$.
On $M^1_{\be_{80}}$, 
\begin{equation*}
\chi_{\be_{80}}(g)=(\det g_{11})^{-2}(\det g_{12})t_1^4
= t_1^6 (\det g_{12})^3. 
\end{equation*}
%


Let 
\begin{equation*}
A(x) = 
\begin{pmatrix}
x_{461} & x_{462} \\
x_{561} & x_{562}
\end{pmatrix}. 
\end{equation*}
We identify $Z_{\be_{80}}$ with $\m_2$ 
by the map $x\mapsto A(x)$. 
Then $A(gx) = t_1g_{12}A(x){}^t g_2$. 
Let $P(x)=\det A(x)$.
Then on $G_{\text{st},\be_{80}}$, $P(gx)=t_1^2(\det g_{12})P(x)$. 
So $P(x)$ is invariant under the action of $G_{\text{st},\be_{80}}$.  
Let $R(80)\in Z^{\sst}_{\be_{80}\,k}$ be the element 
such that $x_{461},x_{562}=1,x_{561},x_{462}=0$ 
(note that $P(R(80))=1$). 
It is easy to see that $Z^{\sst}_{\be_{80}\,k} = M_{\be_{80}\,k}R(80)$. 
Since $W_{\be_{80}}=\{0\}$, 
$Y^{\sst}_{\be_{80}\,k} = P_{\be_{80}\,k}R(80)$ also.

(13) $\be_{81}=\tfrac {1} {6} (-2,-2,-2,-2,4,4,-3,3)$

This case is obvious.

\begin{thm}
\label{thm:main2-detail}
\begin{itemize}
\item[(1)]
For the \pv{} (2), $S_{\be_i}\not=\empty$ 
if and only if $i$ is one of the numbers in (\ref{eq:list-non-empty-62}).
\item[(2)]
For $i$ in (\ref{eq:list-non-empty-62}), 
$S_{\be_i\,k}$ is a single $P_{\be_i\,k}$-orbit 
except for $i=67$.  
\item[(3)]
$G_k\backslash S_{\be_{67}\,k}$ 
is in bijective correspondence with $\Ex_2(k)$. 
\end{itemize}
\end{thm}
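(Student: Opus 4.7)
The plan is to split the theorem into two halves paralleling the case (1) treatment in Sections \ref{sec:non-empty-strata1} and \ref{sec:empty-strata1}. The propositions and computations collected in Section \ref{sec:non-empty-strata2} already handle the positive half: for each of the twelve indices $i$ in (\ref{eq:list-non-empty-62}) we have exhibited a representative $R(i)\in Z_{\be_i\,k}$, written down a relative invariant $P(x)$ whose character on $M^1_{\be_i}$ is proportional to $\chi_{\be_i}$, and proved that $Y^{\sst}_{\be_i\,k}=P_{\be_i\,k}R(i)$ (single orbit) for $i\neq 67$, while for $i=67$ the $D_4$-type analysis in \cite{wryu} identifies $M_{\be_{67}\,k}\backslash Z^{\sst}_{\be_{67}\,k}$ with $\Ex_2(k)$. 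Since $S_{\be_i}\cong G_k\times_{P_{\be_i\,k}}Y^{\sst}_{\be_i\,k}$, this already yields that $S_{\be_i}\neq\emptyset$ for $i$ in (\ref{eq:list-non-empty-62}), gives part~(2), and gives part~(3). Thus only the ``only if'' direction of part~(1) remains.

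For the remaining direction, the plan is to imitate Section \ref{sec:empty-strata1} word-for-word: for each index $i$ not in (\ref{eq:list-non-empty-62}), it suffices to show $Z^{\sst}_{\be_i}=\emptyset$, which is done in two steps. First, using the semi-simple part $M^s_{\be_i}\subset G_{\text{st},\be_i}$ together with the elimination lemmas \ref{lem:eliminate-standard}--\ref{lem:alernating-matrix}, we reduce an arbitrary $x\in Z_{\be_i\,k}$ to one in which a prescribed list of coordinates vanishes; the relevant lemma is dictated by the $M^s_{\be_i}$-module structure of $Z_{\be_i}$ read off from the table (for instance the alternating matrix lemma \ref{lem:alernating-matrix} for strata where $M^s_{\be_i}$ contains an $\spl_m$ acting on $\wedge^2\aff^m$, Witt-type elimination for those involving the $D_4$-piece, and Lemma \ref{lem:eliminate-nm-matrix} for $\gl$-actions on rectangular matrices). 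Second, we produce a $1$-parameter subgroup $\lam(t)$ of $G_{\text{st},\be_i}$ on which every remaining coordinate has strictly positive weight; such a $\lam$ must lie in $M^1_{\be_i}$, satisfy the trace-zero relations for each $\gl$-factor, and be orthogonal to $\be_i$ with respect to $(\,,\,)_*$.

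The main obstacle will be constructing the correct $1$-parameter subgroups for the borderline strata, much as with $\be_{10}$ in case (1). The procedure I propose is exactly the linear-algebraic search illustrated there: once the coordinates to be set to zero are fixed, write down the weight vectors $v_j$ of the surviving coordinates, choose the coefficients $c\in\Q^{\mathrm{rank}}$ parametrizing $\lam$ so that the first few pairings ${}^t c\,v_j$ are arbitrary positive integers (possible whenever those $v_j$ are linearly independent), and then verify that the remaining pairings become positive provided one of the free parameters is taken large enough. The lists recorded in the table of Section \ref{sec:empty-strata1} for case (1) encode exactly the output of this search; the analogous table for case (2) can be produced one stratum at a time and each row mechanically checked by the same MAPLE snippet given there.

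The execution therefore reduces to a stratum-by-stratum bookkeeping: for each of the $81-12=69$ indices not in (\ref{eq:list-non-empty-62}), identify which coordinates can be killed off by the appropriate elimination lemma using $M^s_{\be_i}$, then supply an explicit $\lam(t)\in G_{\text{st},\be_i}$ making the remaining weights positive. There are no surprises at the conceptual level; the difficulty is purely combinatorial, since the indexing of $Z_{\be_i}$ uses the table of Section 8 of \cite{tajima-yukie-GIT1} and one must keep track of which of the $30$ coordinates $e_{ij,k}$ survive in each stratum. Once all cases are tabulated and checked (ideally via a uniform MAPLE verification as described in Section \ref{sec:empty-strata1}), the ``only if'' direction follows and the proof of Theorem \ref{thm:main2-detail} is complete.
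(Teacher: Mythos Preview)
Your proposal is correct and follows exactly the paper's own approach: Section~\ref{sec:non-empty-strata2} establishes the ``if'' direction of (1) together with parts (2) and (3), and Section~\ref{sec:empty-strata2} then carries out precisely the stratum-by-stratum elimination-plus-$1$PS verification you describe (using Lemmas~\ref{lem:eliminate-standard}--\ref{lem:alernating-matrix} and the orthogonality-to-$\be_i$ constraint) to prove the ``only if'' direction. One minor bookkeeping point: the displayed list (\ref{eq:list-non-empty-62}) omits $i=74$, but the case analysis in Section~\ref{sec:non-empty-strata2} and Theorem~\ref{thm:main2} confirm there are $13$ non-empty strata, so the empty-strata check covers $81-13=68$ indices rather than $69$.
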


Considerations of this section proves the 
above theorem except for 
the ``only if'' part of (2). 
We shall prove the ``only if'' part of (2) 
in the next section.

\section{Empty strata for the case (2)}
\label{sec:empty-strata2}

\newcommand{\tallstrut}{\rule[0pt]{0cm}{0pt}}
\newcommand{\tinsert}{\quad\tallstrut}

In this section we prove that $S_{\be_i}=\emptyset$ 
for $i$ not in (\ref{eq:list-non-empty-62}) for 
the \pv{} (2). We proceed as in Section \ref{sec:empty-strata1}.

In the following table, for each $i$, we list 
which coordinates of $x$ we can eliminate and 
the 1PS with the property that weights of non-zero
coordinates are all positive. 

\vskip 10pt

\begin{center}

\begin{tabular}{|l|l|l|}
\hline
\size $i$ & $\;$ \hskip 0.8in \size $M_{\be_i}$, $Z_{\be_i}$ \hskip 0.8in $\;$ 
& $\;$ \hskip 1.2in \size 1PS \hskip 1.2in $\;$ \\
\cline{2-3}
& \hskip 0.7in \size zero coordinates 
& \hskip 0.5in \size non-zero coordinates and their weights \\
\hline \hline
\size  $1$ & \size $M_{[5],[1]}$, \;
\begin{math}
\Lam^{5,1}_{1,[1,5]}\oplus \Lam^{5,2}_{1,[1,5]}
\end{math}
& \size $[-2,-2,-2,-2,13,-5,-5,5]$  \\
\cline{2-3}
& \size $x_{161},x_{261},x_{361},x_{461}=0$ 
& \size 
\begin{math}
\begin{matrix}
x_{561},x_{122},x_{132},x_{142},x_{152},x_{232}, \\
x_{242},x_{252},x_{342},x_{352},x_{452}, \hfill 
\end{matrix}
\end{math}
\;  
\begin{math}
\begin{matrix}
[3,1,1,1,16,1, \hfill \\
1,16,1,16,16] \hfill
\end{matrix}
\end{math}
\\
\hline \hline 
\size  $2$ & \size $M_{[3],[1]}$, \; 
\begin{math}
\Lam^{3,2}_{1,[4,6]}\oplus \Lam^{3,1}_{1,[1,3]}\otimes \Lam^{3,1}_{1,[4,6]}
\end{math}
& \size $[-13,5,11,-7,-7,11,-3,3]$ \\
\cline{2-3}
& \size $x_{451},x_{461},x_{142},x_{152}=0$ 
& \size 
\begin{math}
\begin{matrix}
x_{561},x_{162},x_{242},x_{252}, \\
x_{262},x_{342},x_{352},x_{362}, \hfill 
\end{matrix}
\end{math}
\;  $[1,1,1,1,19,7,7,25]$ \\
\hline \hline 
\size  $3$ & \size $M_{[1],\emptyset}$, \; 
$\Lam^{5,2}_{1,[2,6]}\otimes \Lam^{2,1}_{2,[1,2]}$
& \size $[0,-12,-2,8,-2,8,-5,5]$ \\ 
\cline{2-3}
& \size 
\begin{math}
\begin{matrix}
x_{231},x_{241},x_{251},x_{261},x_{351}, \hfill \\
x_{361},x_{451},x_{461},x_{232},x_{252}=0
\end{matrix}
\end{math}
& \size 
\begin{math}
\begin{matrix}
x_{341},x_{561},x_{242},x_{262},x_{342}, \\
x_{352},x_{362},x_{452},x_{462},x_{562}, \hfill 
\end{matrix}
\end{math} 
\;  $[1,1,1,1,11,1,11,11,21,11]$ \\
\hline \hline 
\size  $4$ & \size $M_{[1,4],[1]}$, \; 
$\Lam^{3,1}_{1,[2,4]}\otimes \Lam^{2,1}_{1,[5,6]}
\oplus \Lam^{2,1}_{1,[5,6]}\oplus \Lam^{3,2}_{1,[2,4]}$
& \size $[0,-14,10,10,-3,-3,-6,6]$ \\
\cline{2-3}
& \size 
\begin{math}
\begin{matrix}
x_{251},x_{261},x_{351},x_{152}=0
\end{matrix}
\end{math}
& \size 
\begin{math}
\begin{matrix}
x_{361},x_{451},x_{461},x_{162},x_{232},x_{242},x_{342}, 
\end{matrix}
\end{math} 
\;  $[1,1,1,3,2,2,26]$ \\
\hline \hline 
\size  $5$ & \size $M_{[3,5],[1]}$, \; 
$\Lam^{2,1}_{1,[4,5]}\oplus \Lam^{3,1}_{1,[1,3]}\oplus 1$
& \size $[-1,-1,2,-3,5,-2,-2,2]$ \\
\cline{2-3}
& \size 
\begin{math}
\begin{matrix}
x_{461},x_{162},x_{262}=0
\end{matrix}
\end{math}
& \size 
\begin{math}
\begin{matrix}
x_{561},x_{362},x_{452},
\end{matrix}
\end{math} 
\;  $[1,2,4]$ \\
\hline \hline 
\size  $6$ & \size $M_{[3,5],[1]}$, \;
$\Lam^{3,1}_{1,[1,3]}\oplus 1\oplus \Lam^{3,1}_{1,[1,3]}\otimes \Lam^{2,1}_{1,[4,5]}$
& \size $[-7,-8,12,5,5,-7,-4,4]$ \\
\cline{2-3}
& \size 
\begin{math}
\begin{matrix}
x_{161},x_{261}=0
\end{matrix}
\end{math}
& \size 
\begin{math}
\begin{matrix}
x_{361},x_{451},x_{142},x_{152},x_{242},x_{252},x_{342},x_{352}, 
\end{matrix}
\end{math} 
\;  $[1,6,2,2,1,1,21,21]$ \\
\hline
\end{tabular}

\end{center}

\begin{center}

\begin{tabular}{|l|l|l|}
\hline
\size $i$ & $\;$ \hskip 0.8in \size $M_{\be_i}$, $Z_{\be_i}$ \hskip 0.8in $\;$ 
& $\;$ \hskip 1.2in \size 1PS \hskip 1.2in $\;$ \\
\cline{2-3}
& \hskip 0.7in \size zero coordinates 
& \hskip 0.5in \size non-zero coordinates and their weights \\
\hline \hline 
\size  $7$ & \size $M_{[5],\emptyset}$, \;
$\Lam^{5,1}_{1,[1,5]}\otimes \Lam^{2,1}_{2,[1,2]}$
& \size $[-1,-1,0,1,1,0,0,0]$ \\
\cline{2-3}
& \size 
\begin{math}
\begin{matrix}
x_{161},x_{261},x_{361},x_{162},x_{262},x_{362}=0,
\end{matrix}
\end{math}
& \size 
\begin{math}
\begin{matrix}
x_{461},x_{561},x_{462},x_{562}, 
\end{matrix}
\end{math} 
\;  $[1,1,1,1]$ \\
\hline \hline 
\size  $9$ & \size $M_{[1,4],[1]}$, \;
$1\oplus \Lam^{2,1}_{1,[5,6]}\oplus \Lam^{3,2}_{1,[2,4]}$
& \size $[-2,-3,0,3,-3,5,2,-2]$ \\
\cline{2-3}
& \size 
\begin{math}
\begin{matrix}
x_{152},x_{232},x_{242}=0
\end{matrix}
\end{math}
& \size 
\begin{math}
\begin{matrix}
x_{561},x_{162},x_{342},
\end{matrix}
\end{math} 
\;  $[4,1,1]$ \\
\hline \hline 
\size  $10$ & \size $M_{[1,5],[1]}$, 
$\Lam^{4,1}_{1,[2,5]}\oplus 1\oplus \Lam^{4,2}_{1,[2,5]}$ 
& \size $[1,-5,-5,5,5,-1,-2,2]$ \\
\cline{2-3}
& \size 
\begin{math}
\begin{matrix}
x_{261},x_{361},x_{461},x_{232},x_{242}=0
\end{matrix}
\end{math}
& \size 
\begin{math}
\begin{matrix}
x_{561},x_{162},x_{252},x_{342},x_{352},x_{452},  
\end{matrix}
\end{math} 
\;  $[2,2,2,2,2,12]$ \\
\hline \hline 
\size  $11$ & \size $M_{[1,2,5],[1]}$, \;
$1\oplus \Lam^{3,2}_{1,[3,5]}\oplus 1\oplus \Lam^{3,1}_{1,[3,5]}$ 
& \size $[-3,10,-14,-14,22,-1,-5,5]$ \\
\cline{2-3}
& \size 
\begin{math}
\begin{matrix}
x_{341},x_{351}=0
\end{matrix}
\end{math}
& \size 
\begin{math}
\begin{matrix}
x_{261},x_{451},x_{162},x_{232},x_{242},x_{252}, 
\end{matrix}
\end{math} 
\;  $[4,3,1,1,1,37]$ \\
\hline \hline 
\size  $12$ & \size $M_{[2,5],\emptyset}$, \;
$\Lam^{2,1}_{1,[1,2]}\otimes \Lam^{2,1}_{2,[1,2]}
\oplus \Lam^{3,2}_{1,[3,5]}\otimes \Lam^{2,1}_{2,[1,2]}$
& \size $[1,1,-5,-5,6,2,0,0]$ \\
\cline{2-3}
& \size 
\begin{math}
\begin{matrix}
x_{341},x_{342}=0
\end{matrix}
\end{math}
& \size 
\begin{math}
\begin{matrix}
x_{161},x_{261},x_{351},x_{451},x_{162},x_{262},x_{352},x_{452},
\end{matrix}
\end{math} 
\;  $[3,3,1,1,3,3,1,1]$ \\
\hline \hline 
\size  $14$ & \size $M_{[1,3,5],[1]}$, \;
$\Lam^{2,1}_{1,[2,3]}\oplus 1^{2\oplus}
\oplus \Lam^{2,1}_{1,[2,3]}\otimes \Lam^{2,1}_{1,[4,5]}$
& \size $[1,-5,3,-3,5,-1,-1,1]$ \\
\cline{2-3}
& \size 
\begin{math}
\begin{matrix}
x_{261},x_{242}=0
\end{matrix}
\end{math}
& \size 
\begin{math}
\begin{matrix}
x_{361},x_{451},x_{162},x_{252},x_{342},x_{352},
\end{matrix}
\end{math} 
\;  $[1,1,1,1,1,9]$ \\
\hline \hline 
\size  $15$ & \size $M_{[2,3,5],[1]}$, \;
$1^{2\oplus}\oplus \Lam^{2,1}_{1,[1,2]}\oplus \Lam^{2,1}_{1,[4,5]}$ 
& \size $[-2,2,-1,-4,4,1,2,-2]$ \\
\cline{2-3}
& \size 
\begin{math}
\begin{matrix}
x_{162},x_{342}=0
\end{matrix}
\end{math}
& \size 
\begin{math}
\begin{matrix}
x_{361},x_{451},x_{262},x_{352},
\end{matrix}
\end{math} 
\;  $[2,2,1,1]$ \\
\hline \hline 
\size  $16$ & \size $M_{[2,4,5],[1]}$, \;
$\Lam^{2,1}_{1,[1,2]}\oplus \Lam^{2,1}_{1,[3,4]}\oplus \Lam^{2,1}_{1,[1,2]}\oplus 1$ 
& \size $[5,5,-25,19,-11,7,-7,7]$ \\
\cline{2-3}
& \size 
\begin{math}
\begin{matrix}
x_{161},x_{351}=0
\end{matrix}
\end{math}
& \size 
\begin{math}
\begin{matrix}
x_{261},x_{451},x_{152},x_{252},x_{342}, 
\end{matrix}
\end{math} 
\;  $[5,1,1,1,1]$ \\
\hline \hline 
\size  $17$ & \size $M_{[1,2,4,5],[1]}$, \;
$1\oplus \Lam^{2,1}_{1,[3,4]}\oplus 1^{3\oplus}$ 
& \size $[-1,3,-7,7,-3,1,-2,2]$ \\
\cline{2-3}
& \size 
\begin{math}
\begin{matrix}
x_{351}=0
\end{matrix}
\end{math}
& \size 
\begin{math}
\begin{matrix}
x_{261},x_{451},x_{162},x_{252},x_{342}, 
\end{matrix}
\end{math} 
\;  $[2,2,2,2,2]$ \\
\hline \hline 
\size $19$ & \size $M_{[5],[1]}$, \;
$\Lam^{5,1}_{1,[1,5]}$ 
& \size $[-1,0,0,0,1,0,0,0]$ \\
\cline{2-3}
& \size
\begin{math}
\begin{matrix}
x_{162},x_{262},x_{362},x_{462}=0
\end{matrix}
\end{math}
& \size 
\begin{math}
\begin{matrix}
x_{562}, 
\end{matrix}
\end{math} 
\;  $[1]$ \\
\hline \hline 
\size $20$ & \size $M_{[4,5],[1]}$, \;
$1\oplus \Lam^{4,1}_{1,[1,4]}$ 
& \size $[-2,-2,-2,6,-3,3,8,-8]$ \\
\cline{2-3}
& \size
\begin{math}
\begin{matrix}
x_{162},x_{262},x_{362}=0
\end{matrix}
\end{math}
& \size 
\begin{math}
\begin{matrix}
x_{561},x_{462},
\end{matrix}
\end{math} 
\;  $[8,1]$ \\
\hline
\hline
\size  $21$ & \size $M_{[1,3],[1]}$, \;
\begin{math}
\Lam^{3,2}_{1,[4,6]}\oplus \Lam^{2,1}_{1,[2,3]}\otimes \Lam^{3,1}_{1,[4,6]}
\end{math}
& \size $[3,-5,2,-6,1,5,-5,5]$  \\
\cline{2-3}
& \size $x_{451},x_{461},x_{242}=0$ 
& \size 
\begin{math}
\begin{matrix}
x_{561},x_{252},x_{262},x_{342},x_{352},x_{362}, 
\end{matrix}
\end{math}
\;  
\begin{math}
\begin{matrix}
[1,1,5,1,8,12]
\end{matrix}
\end{math}
\\
\hline \hline 
\size  $22$ & \size $M_{[3,4],[1]}$, \;
\begin{math}
\Lam^{2,1}_{1,[5,6]}\oplus \Lam^{3,1}_{1,[1,3]}\otimes \Lam^{2,1}_{1,[5,6]}
\end{math}
& \size $[-4,2,2,0,-1,1,0,0]$ \\
\cline{2-3}
& \size $x_{451},x_{152},x_{162}=0$ 
& \size 
\begin{math}
\begin{matrix}
x_{461},x_{252},x_{262},x_{352},x_{362}, 
\end{matrix}
\end{math}
\;  
\begin{math}
[1,1,3,1,3]
\end{math}
\\
\hline \hline 
\size  $23$ & \size $M_{[3,4,5],[1]}$, \;
\begin{math}
1\oplus \Lam^{3,1}_{1,[1,3]}\oplus 1
\end{math}
& \size $[-4,-1,2,3,0,0,1,-1]$  \\
\cline{2-3}
& \size $x_{162},x_{262}=0$ 
& \size 
\begin{math}
\begin{matrix}
x_{561},x_{362},x_{452}, 
\end{matrix}
\end{math}
\;  
\begin{math}
[1,1,2]
\end{math}
\\
\hline \hline 
\size  $24$ & \size $M_{[3,5],\emptyset}$, \;
\begin{math}
\Lam^{3,1}_{1,[1,3]}\otimes \Lam^{2,1}_{2,[1,2]}\oplus \Lam^{2,1}_{2,[1,2]}
\end{math}
& \size $[-4,2,2,0,0,0,-1,1]$  \\
\cline{2-3}
& \size $x_{161},x_{451},x_{162}=0$ 
& \size 
\begin{math}
\begin{matrix}
x_{261},x_{361},x_{262},x_{362},x_{452}, 
\end{matrix}
\end{math}
\;  
\begin{math}
[1,1,3,3,1]
\end{math}
\\
\hline \hline 
\size  $25$ & \size $M_{[3,5],[1]}$, \;
\begin{math}
\Lam^{2,1}_{1,[4,5]}\oplus \Lam^{3,1}_{1,[1,3]}
\end{math}
& \size $[-1,0,1,-1,1,0,0,0]$ \\
\cline{2-3}
& \size $x_{461},x_{162},x_{262}=0$
& \size 
\begin{math}
\begin{matrix}
x_{561},x_{362}, 
\end{matrix}
\end{math}
\;  
\begin{math}
[1,1]
\end{math}
\\
\hline \hline  
\size  $26$ & \size $M_{[2,5],[1]}$, \;
\begin{math}
\Lam^{3,1}_{1,[3,5]}\oplus \Lam^{2,1}_{1,[1,2]}\otimes \Lam^{3,1}_{1,[3,5]}
\end{math}
& \size $[-12,16,-18,9,9,-4,-4,4]$ \\
\cline{2-3}
& \size $x_{361},x_{461},x_{132}=0$ 
& \size 
\begin{math}
\begin{matrix}
x_{561},x_{142},x_{152},x_{232}x_{242},x_{252}, 
\end{matrix}
\end{math}
\;  
\begin{math}
[1,1,1,2,29,29]
\end{math}
\\
\hline \hline 
\size  $27$ & \size $M_{[2,5],[1]}$, \;
\begin{math}
\Lam^{2,1}_{1,[1,2]}\oplus \Lam^{3,2}_{1,[3,5]}
\end{math}
& \size $[-1,1,-1,-1,2,0,0,0]$ \\
\cline{2-3}
& \size $x_{162},x_{342},x_{352}=0$ 
& \size 
\begin{math}
\begin{matrix}
x_{262},x_{452}, 
\end{matrix}
\end{math}
\;  
\begin{math}
[1,1]
\end{math}
\\
\hline \hline 
\size  $28$ & \size $M_{[1,4,5],[1]}$, \;
\begin{math}
1^{2\oplus}\oplus \Lam^{3,2}_{1,[2,4]}
\end{math}
& \size $[0,-17,-3,11,0,9,7,-7]$ \\
\cline{2-3}
& \size $x_{232},x_{242}=0$
& \size 
\begin{math}
\begin{matrix}
x_{561},x_{162},x_{342}, 
\end{matrix}
\end{math}
\;  
\begin{math}
[16,2,1]
\end{math}
\\
\hline \hline 
\size  $29$ & \size $M_{[1,5],[1]}$, \;
\begin{math}
\Lam^{4,1}_{1,[2,5]}\oplus \Lam^{4,2}_{1,[2,5]}
\end{math}
& \size $[0,-7,-12,14,9,-4,-4,4]$ \\
\cline{2-3}
& \size $x_{261},x_{361},x_{461},x_{232},x_{242}=0$
& \size 
\begin{math}
\begin{matrix}
x_{561},x_{252},x_{342},x_{352},x_{452}, 
\end{matrix}
\end{math}
\;  
\begin{math}
[1,6,6,1,27]
\end{math}
\\
\hline 

\end{tabular}

\end{center}

\begin{center}

\begin{tabular}{|l|l|l|}
\hline
\size $i$ & $\;$ \hskip 0.8in \size $M_{\be_i}$, $Z_{\be_i}$ \hskip 0.8in $\;$ 
& $\;$ \hskip 1.2in \size 1PS \hskip 1.2in $\;$ \\
\cline{2-3}
& \hskip 0.7in \size zero coordinates 
& \hskip 0.5in \size non-zero coordinates and their weights \\
\hline \hline 
\size  $30$ & \size $M_{[1,3],[1]}$, \;
\begin{math}
\Lam^{3,2}_{1,[4,6]}\oplus \Lam^{3,1}_{1,[4,6]}\oplus 1
\end{math}
& \size $[5,-1,-1,-8,1,4,-4,4]$ \\
\cline{2-3}
& \size $x_{451},x_{461}=0$
& \size 
\begin{math}
\begin{matrix}
x_{561},x_{142},x_{152},x_{162},x_{232}, 
\end{matrix}
\end{math}
\;  
\begin{math}
[1,1,10,13,2]
\end{math}
\\
\hline \hline 
\size  $31$ & \size $M_{[2,5],[1]}$, \;
\begin{math}
\Lam^{2,1}_{1,[1,2]}\oplus \Lam^{3,2}_{1,[3,5]}
\end{math}
& \size $[-1,1,-1,0,1,0,0,0]$ \\
\cline{2-3}
& \size $x_{161},x_{342},x_{352}=0$ 
& \size 
\begin{math}
\begin{matrix}
x_{261},x_{452}, 
\end{matrix}
\end{math}
\;  
\begin{math}
[1,1]
\end{math}
\\
\hline \hline 
\size  $32$ & \size $M_{[1],[1]}$, \;
\begin{math}
\Lam^{5,2}_{1,[2,6]}
\end{math}
& \size $[0,-4,1,1,1,1,0,0]$ \\
\cline{2-3}
& \size $x_{232},x_{242},x_{252},x_{262}=0$. 
& \size 
\begin{math}
\begin{matrix}
x_{342},x_{352},x_{362},x_{452},x_{462},x_{562}, 
\end{matrix}
\end{math}
\;  
\begin{math}
[2,2,2,2,2,2]
\end{math}
\\
\hline \hline 
\size  $33$ & \size $M_{[1,4],[1]}$, \;
\begin{math}
1\oplus \Lam^{3,1}_{1,[2,4]}\otimes \Lam^{2,1}_{1,[5,6]}
\end{math}
& \size $[-21,-5,13,13,0,0,12,-12]$ \\
\cline{2-3}
& \size $x_{252},x_{262}=0$
& \size 
\begin{math}
\begin{matrix}
x_{561},x_{352},x_{362},x_{452},x_{462}, 
\end{matrix}
\end{math}
\;  
\begin{math}
[12,1,1,1,1]
\end{math}
\\
\hline \hline 
\size  $34$ & \size $M_{[1,3,5],[1]}$, \;
\begin{math}
\Lam^{2,1}_{1,[4,5]}\oplus 1\oplus \Lam^{2,1}_{1,[2,3]}\otimes \Lam^{2,1}_{1,[4,5]}
\end{math}
& \size $[3,-8,8,-8,8,-3,-4,4]$ \\
\cline{2-3}
& \size $x_{461},x_{242}=0$
& \size 
\begin{math}
\begin{matrix}
x_{561},x_{162},x_{252},x_{342},x_{352}, 
\end{matrix}
\end{math}
\;  
\begin{math}
[1,4,4,4,20]
\end{math}
\\
\hline \hline 
\size  $36$ & \size $M_{[1,2,4],[1]}$, \;
\begin{math}
\Lam^{2,1}_{1,[3,4]}\otimes \Lam^{2,1}_{1,[5,6]}\oplus \Lam^{2,1}_{1,[5,6]}\oplus 1
\end{math}
& \size $[0,-2,-2,6,-5,3,0,0]$ \\
\cline{2-3}
& \size $x_{351},x_{252}=0$
& \size 
\begin{math}
\begin{matrix}
x_{361},x_{451},x_{461},x_{262},x_{342}, 
\end{matrix}
\end{math}
\;  
\begin{math}
[1,1,9,1,4]
\end{math}
\\
\hline \hline 
\size  $37$ & \size $M_{[4],[1]}$, \;
\begin{math}
1\oplus \Lam^{4,1}_{1,[1,4]}\otimes \Lam^{2,1}_{1,[5,6]}
\end{math}
& \size $[-5,-5,3,3,2,2,4,-4]$ \\
\cline{2-3}
& \size $x_{152},x_{162},x_{252},x_{262}=0$ 
& \size 
\begin{math}
\begin{matrix}
x_{561},x_{352},x_{362},x_{452},x_{462},
\end{matrix}
\end{math}
\;  
\begin{math}
[8,1,1,1,1]
\end{math}
\\
\hline \hline 
\size  $38$ & \size $M_{[1,3,4],[1]}$, \;
\begin{math}
1\oplus \Lam^{2,1}_{1,[5,6]}\oplus \Lam^{2,1}_{1,[2,3]}
\end{math}
& \size $[1,-12,12,-11,5,5,0,0]$ \\
\cline{2-3}
& \size $x_{242}=0$
& \size 
\begin{math}
\begin{matrix}
x_{561},x_{152},x_{162},x_{342}, 
\end{matrix}
\end{math}
\;  
\begin{math}
[1,9,13,1]
\end{math}
\\
\hline \hline 
\size  $39$ & \size $M_{[1,3,5],[1]}$, \;
\begin{math}
\Lam^{2,1}_{1,[4,5]}\oplus \Lam^{2,1}_{1,[2,3]}\oplus 1
\end{math}
& \size $[0,-1,1,-3,5,-2,-2,2]$ \\
\cline{2-3}
& \size $x_{461},x_{262}=0$ 
& \size 
\begin{math}
\begin{matrix}
x_{561},x_{362},x_{452}, 
\end{matrix}
\end{math}
\;  
\begin{math}
[1,1,4]
\end{math}
\\
\hline \hline 
\size  $40$ & \size $M_{[1,2,3,5],[1]}$, \;
\begin{math}
1^{3\oplus}\Lam^{2,1}_{1,[4,5]}
\end{math}
& \size $[1,-1,-4,-7,7,4,2,-2]$ \\
\cline{2-3}
& \size $x_{342}=0$
& \size 
\begin{math}
\begin{matrix}
x_{361},x_{451},x_{262},x_{352}, 
\end{matrix}
\end{math}
\;  
\begin{math}
[2,2,1,1]
\end{math}
\\
\hline\hline
\size  $41$ & \size $M_{[2,3,4,5],[1]}$, \;
\begin{math}
1^{2\oplus}\Lam^{2,1}_{1,[1,2]}\oplus 1
\end{math}
& \size $[-7,3,4,1,0,-1,1,-1]$ \\
\cline{2-3}
& \size $x_{162}=0$
& \size 
\begin{math}
\begin{matrix}
x_{361},x_{451},x_{262},x_{352}, 
\end{matrix}
\end{math}
\;  
\begin{math}
[4,2,1,3]
\end{math}
\\
\hline \hline 
\size  $42$ & \size $M_{[1,3,5],[1]}$, \;
\begin{math}
\Lam^{2,1}_{1,[2,3]}\oplus 1^{3\oplus}
\end{math}
& \size $[3,-18,14,4,4,-7,-6,6]$ \\
\cline{2-3}
& \size $x_{261}=0$ 
& \size 
\begin{math}
\begin{matrix}
x_{361},x_{451},x_{162},x_{232}, 
\end{matrix}
\end{math}
\;  
\begin{math}
[1,2,2,2]
\end{math}
\\
\hline \hline 
\size  $43$ & \size $M_{[1,2,3,5],[1]}$, \;
\begin{math}
1^{3\oplus}\oplus \Lam^{2,1}_{1,[4,5]}
\end{math}
& \size $[0,5,-7,-9,9,2,1,-1]$ \\
\cline{2-3}
& \size $x_{342}=0$
& \size 
\begin{math}
\begin{matrix}
x_{261},x_{451},x_{162},x_{352}, 
\end{matrix}
\end{math}
\;  
\begin{math}
[8,1,1,1]
\end{math}
\\
\hline \hline 
\size  $44$ & \size $M_{[1,4,5],[1]}$, \;
\begin{math}
\Lam^{3,1}_{1,[2,4]}\oplus 1\oplus \Lam^{3,1}_{1,[2,4]}
\end{math}
& \size $[0,-5,-5,13,0,-3,-6,6]$ \\
\cline{2-3}
& \size $x_{261},x_{361}=0$ 
& \size 
\begin{math}
\begin{matrix}
x_{461},x_{162},x_{252},x_{352},x_{452}, 
\end{matrix}
\end{math}
\;  
\begin{math}
[4,3,1,1,19]
\end{math}
\\
\hline \hline 
\size  $45$ & \size $M_{[1,3,4,5],[1]}$, \;
\begin{math}
\Lam^{2,1}_{1,[2,3]}\oplus 1^{2\oplus}\oplus \Lam^{2,1}_{1,[2,3]}
\end{math}
& \size $[-13,-5,27,0,8,-17,-6,6]$ \\
\cline{2-3}
& \size $x_{261}=0$
& \size 
\begin{math}
\begin{matrix}
x_{361},x_{451},x_{152},x_{242},x_{342}, 
\end{matrix}
\end{math}
\;  
\begin{math}
[4,2,1,1,33]
\end{math}
\\
\hline \hline 
\size  $47$ & \size $M_{[1,5],[1]}$, \;
\begin{math}
\Lam^{4,1}_{1,[2,5]}
\end{math}
& \size $[0,-1,0,0,1,0,0,0]$ \\
\cline{2-3}
& \size $x_{262},x_{362},x_{462}=0$ 
& \size 
\begin{math}
\begin{matrix}
x_{562}, 
\end{matrix}
\end{math}
\;  
\begin{math}
[1]
\end{math}
\\
\hline \hline 
\size  $48$ & \size $M_{[3,5],[1]}$, \;
\begin{math}
\Lam^{3,1}_{1,[1,3]}\oplus 1
\end{math}
& \size $[-6,-1,4,3,3,-3,0,0]$ \\
\cline{2-3}
& \size $x_{162},x_{262}=0$ 
& \size 
\begin{math}
\begin{matrix}
x_{362},x_{452}, 
\end{matrix}
\end{math}
\;  
\begin{math}
[1,6]
\end{math}
\\
\hline \hline 
\size  $49$ & \size $M_{[1,4,5],[1]}$, \;
\begin{math}
1\oplus \Lam^{3,1}_{1,[2,4]}
\end{math}
& \size $[-1,-5,0,5,0,1,5,-5]$ \\
\cline{2-3}
& \size $x_{262},x_{362}=0$
& \size 
\begin{math}
\begin{matrix}
x_{561},x_{462}, 
\end{matrix}
\end{math}
\;  
\begin{math}
[6,1]
\end{math}
\\
\hline \hline 
\size  $50$ & \size $M_{[3,5],[1]}$, \;
\begin{math}
1\oplus \Lam^{3,1}_{1,[1,3]}
\end{math}
& \size $[-11,-1,9,0,0,3,9,-9]$ \\
\cline{2-3}
& \size $x_{162},x_{262}=0$
& \size 
\begin{math}
\begin{matrix}
x_{451},x_{362}, 
\end{matrix}
\end{math}
\;  
\begin{math}
[9,3]
\end{math}
\\
\hline \hline 
\size  $51$ & \size $M_{[1,4],[1]}$, \;
\begin{math}
1\oplus \Lam^{3,2}_{1,[2,4]}
\end{math}
& \size $[-2,-8,8,0,1,1,7,-7]$ \\
\cline{2-3}
& \size $x_{232},x_{242}=0$
& \size 
\begin{math}
\begin{matrix}
x_{561},x_{342}, 
\end{matrix}
\end{math}
\;  
\begin{math}
[9,1]
\end{math}
\\
\hline 
\end{tabular}

\end{center}

\begin{center}

\begin{tabular}{|l|l|l|}
\hline
\size $i$ & $\;$ \hskip 0.8in \size $M_{\be_i}$, $Z_{\be_i}$ \hskip 0.8in $\;$ 
& $\;$ \hskip 1.2in \size 1PS \hskip 1.2in $\;$ \\
\cline{2-3}
& \hskip 0.7in \size zero coordinates 
& \hskip 0.5in \size non-zero coordinates and their weights \\
\hline \hline 
\size  $52$ & \size $M_{[1,4],[1]}$, \;
\begin{math}
\Lam^{2,1}_{1,[5,6]}\otimes \Lam^{3,1}_{1,[2,4]}
\end{math}
& \size $[0,-2,1,1,0,0,0,0]$ \\
\cline{2-3}
& \size $x_{252},x_{262}=0$ 
& \size 
\begin{math}
\begin{matrix}
x_{352},x_{362},x_{452},x_{462}, 
\end{matrix}
\end{math}
\;  
\begin{math}
[1,1,1,1]
\end{math}
\\
\hline \hline 
\size  $53$ & \size $M_{[1,3,4],[1]}$, \;
\begin{math}
\Lam^{2,1}_{1,[5,6]}\oplus \Lam^{2,1}_{1,[5,6]}\otimes \Lam^{2,1}_{1,[2,3]}
\end{math}
& \size $[0,-4,6,-2,-5,5,-2,2]$ \\
\cline{2-3}
& \size $x_{451},x_{252}=0$
& \size 
\begin{math}
\begin{matrix}
x_{461},x_{262},x_{352},x_{362}, 
\end{matrix}
\end{math}
\;  
\begin{math}
[1,3,3,13]
\end{math}
\\
\hline \hline 
\size  $54$ & \size $M_{[4],[1]}$, \;
\begin{math}
\Lam^{4,1}_{1,[1,4]}\otimes \Lam^{2,1}_{1,[5,6]}
\end{math}
& \size $[-1,-1,1,1,0,0,0,0]$ \\
\cline{2-3}
& \size $x_{152},x_{162},x_{252},x_{262}=0$  
& \size 
\begin{math}
\begin{matrix}
x_{352},x_{362},x_{452},x_{462}, 
\end{matrix}
\end{math}
\;  
\begin{math}
[1,1,1,1]
\end{math}
\\
\hline \hline 
\size  $55$ & \size $M_{[1,3,4,5],[1]}$, \;
\begin{math}
1\oplus \Lam^{2,1}_{1,[2,3]}\oplus 1
\end{math}
& \size $[0,-4,4,-1,4,-3,0,0]$ \\
\cline{2-3}
& \size $x_{262}=0$
& \size 
\begin{math}
\begin{matrix}
x_{561},x_{362},x_{452}, 
\end{matrix}
\end{math}
\;  
\begin{math}
[1,1,3]
\end{math}
\\
\hline \hline 
\size  $56$ & \size $M_{[2,5],[1]}$, \;
\begin{math}
\Lam^{3,1}_{1,[3,5]}\oplus \Lam^{2,1}_{1,[1,2]}\oplus \Lam^{3,2}_{1,[3,5]}
\end{math}
& \size $[-9,5,3,2,1,-2,2,-2]$ \\
\cline{2-3}
& \size $x_{361},x_{461},x_{162}=0$
& \size 
\begin{math}
\begin{matrix}
x_{561},x_{262},x_{342},x_{352},x_{452}, 
\end{matrix}
\end{math}
\;  
\begin{math}
[1,1,3,2,1]
\end{math}
\\
\hline \hline 
\size  $57$ & \size $M_{[1,2,4],[1]}$, \;
\begin{math}
1\oplus \Lam^{2,1}_{1,[5,6]}\oplus 1
\end{math}
& \size $[0,-2,1,1,-4,4,1,-1]$ \\
\cline{2-3}
& \size $x_{252}=0$
& \size 
\begin{math}
\begin{matrix}
x_{561},x_{262},x_{342}, 
\end{matrix}
\end{math}
\;  
\begin{math}
[1,1,1]
\end{math}
\\
\hline \hline 
\size  $58$ & \size $M_{[1,3,4,5],[1]}$, \;
\begin{math}
1^{2\oplus}\oplus \Lam^{2,1}_{1,[2,3]}
\end{math}
& \size $[0,-3,3,1,-2,1,0,0]$ \\
\cline{2-3}
& \size $x_{252}=0$ 
& \size 
\begin{math}
\begin{matrix}
x_{461},x_{162},x_{352}, 
\end{matrix}
\end{math}
\;  
\begin{math}
[2,1,1] 
\end{math}
\\
\hline \hline 
\size  $59$ & \size $M_{[2,5],\emptyset}$, \;
\begin{math}
\Lam^{3,1}_{1,[3,5]}\otimes \Lam^{2,1}_{2,[1,2]}
\end{math}
& \size $[0,0,-2,1,1,0,0,0]$ \\
\cline{2-3}
& \size $x_{361},x_{362}=0$ 
& \size 
\begin{math}
\begin{matrix}
x_{461},x_{561},x_{462},x_{562}, 
\end{matrix}
\end{math}
\;  
\begin{math}
[1,1,1,1]
\end{math}
\\
\hline \hline 
\size  $60$ & \size $M_{[3],\emptyset}$, \;
\begin{math}
\Lam^{3,2}_{1,[4,6]}\otimes \Lam^{2,1}_{2,[1,2]}
\end{math}
& \size $[0,0,0,-1,-1,2,0,0]$ \\
\cline{2-3}
& \size $x_{451},x_{452}=0$ 
& \size 
\begin{math}
\begin{matrix}
x_{461},x_{561},x_{462},x_{562}, 
\end{matrix}
\end{math}
\;  
\begin{math}
[1,1,1,1]
\end{math}
\\
\hline\hline
\size  $61$ & \size $M_{[1,3,5],\emptyset}$, \;
\begin{math}
\Lam^{2,1}_{1,[2,3]}\otimes \Lam^{2,1}_{2,[1,2]}\oplus \Lam^{2,1}_{2,[1,2]}
\end{math}
& \size $[1,-5,5,-2,-2,3,-5,5]$ \\
\cline{2-3}
& \size $x_{261},x_{451}=0$
& \size 
\begin{math}
\begin{matrix}
x_{361},x_{262},x_{362},x_{452}, 
\end{matrix}
\end{math}
\;  
\begin{math}
[3,3,13,1]
\end{math}
\\
\hline \hline 
\size  $62$ & \size $M_{[1,5],\emptyset}$, \;
\begin{math}
\Lam^{4,1}_{1,[2,5]}\otimes \Lam^{2,1}_{2,[1,2]}
\end{math}
& \size $[0,-1,-1,1,1,0,0,0]$ \\
\cline{2-3}
& \size $x_{261},x_{361},x_{262},x_{362}=0$ 
& \size 
\begin{math}
\begin{matrix}
x_{461},x_{561},x_{462},x_{562}, 
\end{matrix}
\end{math}
\;  
\begin{math}
[1,1,1,1]
\end{math}
\\
\hline \hline 
\size  $63$ & \size $M_{[2,3,5],[1]}$, \;
\begin{math}
\Lam^{2,1}_{1,[4,5]}\oplus 1^{2\oplus}
\end{math}
& \size $[0,0,1,-4,4,-1,-2,2]$ \\
\cline{2-3}
& \size $x_{461}=0$ 
& \size 
\begin{math}
\begin{matrix}
x_{561},x_{362},x_{452}, 
\end{matrix}
\end{math}
\;  
\begin{math}
[1,2,2]
\end{math}
\\
\hline \hline 
\size  $64$ & \size $M_{[1,3,5],[1]}$, \;
\begin{math}
\Lam^{2,1}_{1,[2,3]}\oplus 1^{2\oplus}
\end{math}
& \size $[1,-8,8,0,0,-1,-6,6]$ \\
\cline{2-3}
& \size $x_{261}=0$
& \size 
\begin{math}
\begin{matrix}
x_{361},x_{162},x_{452}, 
\end{matrix}
\end{math}
\;  
\begin{math}
[1,6,6]
\end{math}
\\
\hline \hline 
\size  $65$ & \size $M_{[1,3,5],[1]}$, \;
\begin{math}
(\Lam^{2,1}_{1,[4,5]})^{2\oplus}\oplus 1
\end{math}
& \size $[8,-2,-2,-11,11,-4,-5,5]$ \\
\cline{2-3}
& \size $x_{461}=0$ 
& \size 
\begin{math}
\begin{matrix}
x_{561},x_{142},x_{152},x_{232}, 
\end{matrix}
\end{math}
\;  
\begin{math}
[2,2,24,1]
\end{math}
\\
\hline \hline 
\size  $68$ & \size $M_{[2,5],[1]}$, \;
\begin{math}
\Lam^{3,1}_{1,[3,5]}
\end{math}
& \size $[0,0,-1,0,1,0,0,0]$ \\
\cline{2-3}
& \size $x_{362},x_{462}=0$ 
& \size 
\begin{math}
\begin{matrix}
x_{562}, 
\end{matrix}
\end{math}
\;  
\begin{math}
[1]
\end{math}
\\
\hline \hline 
\size  $69$ & \size $M_{[3],[1]}$, \;
\begin{math}
\Lam^{3,2}_{1,[4,6]}
\end{math}
& \size $[0,0,0,-1,-1,2,0,0]$ \\
\cline{2-3}
& \size $x_{452},x_{462}=0$
& \size 
\begin{math}
\begin{matrix}
x_{562}, 
\end{matrix}
\end{math}
\;  
\begin{math}
[1]
\end{math}
\\
\hline \hline 
\size  $70$ & \size $M_{[1,3,5],[1]}$, \;
\begin{math}
\Lam^{2,1}_{1,[2,3]}\oplus 1
\end{math}
& \size $[-1,-4,4,2,2,-3,0,0]$ \\
\cline{2-3}
& \size $x_{262}=0$  
& \size 
\begin{math}
\begin{matrix}
x_{362},x_{452}, 
\end{matrix}
\end{math}
\;  
\begin{math}
[1,4]
\end{math}
\\
\hline \hline 
\size  $71$ & \size $M_{[2,4,5],[1]}$, \;
\begin{math}
1\oplus \Lam^{2,1}_{1,[3,4]}
\end{math}
& \size $[0,0,-4,4,-1,1,4,-4]$ \\
\cline{2-3}
& \size $x_{362}=0$
& \size 
\begin{math}
\begin{matrix}
x_{561},x_{462}, 
\end{matrix}
\end{math}
\;  
\begin{math}
[4,1]
\end{math}
\\
\hline \hline 
\size  $72$ & \size $M_{[3,4],[1]}$, \;
\begin{math}
1\oplus \Lam^{2,1}_{1,[5,6]}
\end{math}
& \size $[0,0,0,-2,-3,5,2,-2]$ \\
\cline{2-3}
& \size $x_{452}=0$ 
& \size 
\begin{math}
\begin{matrix}
x_{561},x_{462}, 
\end{matrix}
\end{math}
\;  
\begin{math}
[4,1]
\end{math}
\\
\hline \hline 
\size  $73$ & \size $M_{[1,3,5],[1]}$, \;
\begin{math}
1\oplus \Lam^{2,1}_{1,[2,3]}
\end{math}
& \size $[-1,-6,6,0,0,1,6,-6]$ \\
\cline{2-3}
& \size $x_{262}=0$
& \size 
\begin{math}
\begin{matrix}
x_{451},x_{362}, 
\end{matrix}
\end{math}
\;  
\begin{math}
[6,1]
\end{math}
\\
\hline 
\end{tabular}

\end{center}

\vskip 10pt

\begin{center}

\begin{tabular}{|l|l|l|}
\hline
\size $i$ & $\;$ \hskip 0.8in \size $M_{\be_i}$, $Z_{\be_i}$ \hskip 0.8in $\;$ 
& $\;$ \hskip 1.2in \size 1PS \hskip 1.2in $\;$ \\
\cline{2-3}
& \hskip 0.7in \size zero coordinates 
& \hskip 0.5in \size non-zero coordinates and their weights \\
\hline \hline 
\size  $77$ & \size $M_{[3,5],[1]}$, \;
\begin{math}
\Lam^{2,1}_{1,[4,5]}
\end{math}
& \size $[0,0,0,-1,1,0,0,0]$ \\
\cline{2-3}
& \size $x_{462}=0$ 
& \size 
\begin{math}
\begin{matrix}
x_{562}, 
\end{matrix}
\end{math}
\;  
\begin{math}
[1]
\end{math}
\\
\hline \hline 
\size  $79$ & \size $M_{[4],\emptyset}$, \;
\begin{math}
\Lam^{2,1}_{2,[1,2]}
\end{math}
& \size $[0,0,0,0,0,0,-1,1]$ \\
\cline{2-3}
& \size $x_{561}=0$ 
& \size 
\begin{math}
\begin{matrix}
x_{562}, 
\end{matrix}
\end{math}
\;  
\begin{math}
[1]
\end{math}
\\
\hline 

\end{tabular}

\end{center}

\vskip 10pt

(1) $\be_1= \tfrac {1} {48} (-1,-1,-1,-1,-1,5,-3,3)$. 


$Z_{\be_1}$ is spanned by $\coorde_i=e_{jkl}$ 
for the following $i,jkl$. 

\tiny
\vskip 5pt

\begin{center}
 
\begin{tabular}{|c|c|c|c|c|c|c|c|c|c|c|c|c|c|c|c|}
\hline
$i$ & $\underline{5}$ & $\underline{9}$ 
& $\underline{12}$ & $\underline{14}$ & $15$ & $16$ & $17$ 
& $18$ & $19$ & $21$ & $22$ & $23$ & $25$ & $26$ & $28$ \\
\hline
$jkl$ & $\underline{161}$ & $\underline{261}$ 
& $\underline{361}$ & $\underline{461}$ & $561$ 
& $122$ & $132$ & $142$ & $152$ & $232$ 
& $242$ & $252$ & $342$ & $352$ & $452$ \\
\hline
\end{tabular}

\end{center}
\normalsize

\vskip 5pt




Lemma \ref{lem:eliminate-standard} implies that 
we may assume that
$x_{161},x_{261},x_{361},x_{461}=0$. 

\vskip 5pt

(2) $\be_2=\tfrac {1} {8} (-1,-1,-1,1,1,1,-1,1)$.  

$Z_{\be_2}$ is spanned by $\coorde_i=e_{jkl}$ 
for the following $i,jkl$. 

\tiny
\vskip 5pt

\begin{center}
 
\begin{tabular}{|c|c|c|c|c|c|c|c|c|c|c|c|c|c|c|c|}
\hline
$i$ & $\underline{13}$ & $\underline{14}$ 
& $15$ & $\underline{18}$ & $\underline{19}$ & $20$ & $22$ 
& $23$ & $24$ & $25$ & $26$ & $27$ \\
\hline
$jkl$ & $\underline{451}$ & $\underline{461}$ 
& $561$ & $\underline{142}$ & $\underline{152}$ 
& $162$ & $242$ & $252$ & $262$ & $342$ 
& $352$ & $362$ \\
\hline
\end{tabular}

\end{center}
\normalsize

\vskip 5pt

%


%
%
%
%

Note that $M^s_{[3],[1]}\cong \spl_3\times \spl_3$. 
We apply Lemma \ref{lem:alernating-matrix} 
(resp. Lemma \ref{lem:eliminate-nm-matrix})
to the action of $\spl_3$ of the second factor 
(resp. the first factor) and 
may assume that $x_{451},x_{461}=0$ 
(resp. $x_{142},x_{152}=0$).

\vskip 5pt

(3) $\be_3=\tfrac {1} {15} (-5,1,1,1,1,1,0,0)$.  


$Z_{\be_3}$ is spanned by $\coorde_i=e_{jkl}$ 
for the following $i,jkl$. 

\tiny
\vskip 5pt

\begin{center}
 
\begin{tabular}{|c|c|c|c|c|c|c|c|c|c|c|c|c|c|c|c|c|c|c|c|c|c|}
\hline
$i$ & $\underline{6}$ & $\underline{7}$ & $\underline{8}$ 
& $\underline{9}$ & $10$ & $\underline{11}$ 
& $\underline{12}$ & $\underline{13}$ & $\underline{14}$ & $15$ \\
\hline
$jkl$ & $\underline{231}$ & $\underline{241}$ 
& $\underline{251}$ & $\underline{261}$ & $341$ 
& $\underline{351}$ & $\underline{361}$ 
& $\underline{451}$ & $\underline{461}$ & $561$ \\
\hline
$i$ & $\underline{21}$ & $22$ & $\underline{23}$ 
& $24$ & $25$ & $26$ & $27$ & $28$ & $29$ & $30$ \\
\hline
$jkl$ & $\underline{232}$ & $242$ & $\underline{252}$ & $262$ & $342$ 
& $352$ & $362$ & $452$ & $462$ & $562$ \\
\hline
\end{tabular}

\end{center}
\normalsize

\vskip 5pt

%

Note that $M^s_{[1],\emptyset}\cong \spl_5\times \spl_2$.  
We identify $Z_{\be_3}$
with the space of pairs $(A_1,A_2)$ of $5\times 5$ alternating matrices.
By Lemma \ref{lem:alernating-matrix} (2), we may assume that 
the only possible non-zero entries of $A_1$ are the 
$(2,3),(4,5)$-entries $x_{341},x_{561}$. 
So $x_{231},x_{241},x_{251},x_{261}$, 
$x_{351},x_{361},x_{451},x_{461}=0$. 
Matrices of the form $\diag(I_2,g_1,g_2)$ with $g_1,g_2\in \spl_2$ 
do not change this condition. By the action of $\diag(I_2,g_1,g_2)$ on $A_2$, 
the first column of $A_2$ can be identified with two 
standard \rep s of $\spl_2$. Therefore, Lemma \ref{lem:eliminate-standard} 
implies that we may further assume that $x_{232},x_{252}=0$. 

\vskip 5pt 

(4) $\be_4=\tfrac {1} {120} (-7,-1,-1,-1,5,5,-3,3)$. 


$Z_{\be_4}$ is spanned by $\coorde_i=e_{jkl}$ 
for the following $i,jkl$. 

\tiny
\vskip 5pt

\begin{center}
 
\begin{tabular}{|c|c|c|c|c|c|c|c|c|c|c|c|c|c|c|c|}
\hline
$i$ & $\underline{8}$ & $\underline{9}$ & $\underline{11}$ 
& $12$ & $13$ & $14$ & $\underline{19}$ 
& $20$ & $21$ & $22$ & $25$  \\
\hline
$jkl$ & $\underline{251}$ & $\underline{261}$ 
& $\underline{351}$ & $361$ & $451$ 
& $461$ & $\underline{152}$ & $162$ & $232$ & $242$ 
& $342$  \\
\hline
\end{tabular}

\end{center}
\normalsize

\vskip 5pt



We identify $\lan e_{251}\ccd e_{461} \ran$ with $\m_{3,2}$.  
Lemma \ref{lem:eliminate-nm-matrix} implies that we may assume that 
$x_{251},x_{261}=0$. 
Matrices of the form $\diag(I_2,g_1,g_2)$ with $g_1,g_2\in\spl_2$
do not change this condition. 
By Lemma \ref{lem:eliminate-2m(2)}, we may assume that 
$x_{351},x_{152}=0$. 

\vskip 5pt

(5) $\be_5=\tfrac {7} {138} (-4,-4,-4,2,2,8,-3,3)$. 

$Z_{\be_5}$ is spanned by $\coorde_i=e_{jkl}$ 
for the following $i,jkl$. 

\tiny
\vskip 5pt

\begin{center}

\begin{tabular}{|c|c|c|c|c|c|c|c|c|c|c|c|c|c|c|c|}
\hline
$i$ & $\underline{14}$ & $15$ & $\underline{20}$ 
& $\underline{24}$ & $27$ & $28$ \\
\hline
$jkl$ & $\underline{461}$ & $561$ 
& $\underline{162}$ & $\underline{262}$ & $362$ 
& $452$ \\
\hline
\end{tabular}

\end{center}
\normalsize

\vskip 5pt

Note that $M^s_{[3,5],[1]}\cong \spl_3\times \spl_2$. 
Lemma \ref{lem:eliminate-standard} implies that
we may assume that $x_{461},x_{162},x_{262}=0$. 

\vskip 5pt

(6) $\be_6=\tfrac {1} {138} (-4,-4,-4,2,2,8,-3,3)$. 


$Z_{\be_6}$ is spanned by $\coorde_i=e_{jkl}$ 
for the following $i,jkl$. 

\tiny
\vskip 5pt

\begin{center}
 
\begin{tabular}{|c|c|c|c|c|c|c|c|c|c|c|c|c|c|c|c|}
\hline
$i$ & $\underline{5}$ & $\underline{9}$ & $12$ 
& $13$ & $18$ & $19$ & $22$ 
& $23$ & $25$ & $26$  \\
\hline
$jkl$ & $\underline{161}$ & $\underline{261}$ 
& $361$ & $451$ & $142$ 
& $152$ & $242$ & $252$ & $342$ & $352$ \\
\hline
\end{tabular}

\end{center}
\normalsize

\vskip 5pt

Lemma \ref{lem:eliminate-standard} implies that 
we may assume that $x_{161},x_{261}=0$. 

(7) $\be_7=\tfrac {2} {15} (-1,-1,-1,-1,-1,5,0,0)$.  
%
%

$Z_{\be_7}$ is spanned by $\coorde_i=e_{jkl}$ 
for the following $i,jkl$. 

\tiny
\vskip 5pt

\begin{center}

 
\begin{tabular}{|c|c|c|c|c|c|c|c|c|c|c|c|c|c|c|c|}
\hline
$i$ & $\underline{5}$ & $\underline{9}$ & $\underline{12}$ 
& $14$ & $15$ & $\underline{20}$ & $\underline{24}$ 
& $\underline{27}$ & $29$ & $30$  \\
\hline
$jkl$ & $\underline{161}$ & $\underline{261}$ 
& $\underline{361}$ & $461$ & $561$ 
& $\underline{162}$ & $\underline{262}$ & $\underline{362}$ & $462$ & $562$ \\
\hline
\end{tabular}

\end{center}
\normalsize

\vskip 5pt

We identify $Z_{\be_7}$
with $\m_{5,2}$. Lemma \ref{lem:eliminate-nm-matrix} implies that
we may assume that $x_{161},x_{261}$, $x_{361},x_{162},x_{262},x_{362}=0$.

(8) $\be_9=\tfrac {2} {87} (-7,-1,-1,-1,5,5,-6,6)$.  
%

$Z_{\be_9}$ is spanned by $\coorde_i=e_{jkl}$ 
for the following $i,jkl$. 

\tiny
\vskip 5pt

\begin{center}

 
\begin{tabular}{|c|c|c|c|c|c|c|c|c|c|c|c|c|c|c|c|}
\hline
$i$ & $15$ & $\underline{19}$ & $20$ 
& $\underline{21}$ & $\underline{22}$ & $25$  \\
\hline
$jkl$ & $561$ & $\underline{152}$ & $162$ 
& $\underline{232}$ & $\underline{242}$ & $342$ \\
\hline
\end{tabular}

\end{center}
\normalsize

\vskip 5pt

We apply Lemma \ref{lem:eliminate-standard} 
(resp. Lemma \ref{lem:alernating-matrix} (2))
to $\lan e_{152},e_{162}\ran$ (resp. $\lan e_{232},e_{242},e_{342}\ran$) 
and may assume that $x_{152},x_{232},x_{242}=0$. 

\vskip 5pt

(9) $\be_{10}=\tfrac {1} {10} (-2,0,0,0,0,2,-1,1)$.  

$Z_{\be_{10}}$ is spanned by $\coorde_i=e_{jkl}$ 
for the following $i,jkl$. 

\tiny
\vskip 5pt

\begin{center}

 
\begin{tabular}{|c|c|c|c|c|c|c|c|c|c|c|c|c|c|c|c|}
\hline
$i$ & $\underline{9}$ & $\underline{12}$ 
& $\underline{14}$ & $15$ & $20$ & $\underline{21}$ 
& $\underline{22}$ & $23$ & $25$ & $26$ & $28$ \\
\hline
$jkl$ & $\underline{261}$ & $\underline{361}$ & $\underline{461}$ & $561$ 
& $162$ & $\underline{232}$ & $\underline{242}$ 
& $252$ & $342$ & $352$ & $452$ \\
\hline
\end{tabular}

\end{center}
\normalsize

\vskip 5pt

Lemma \ref{lem:eliminate-standard} implies that
we may assume that $x_{261},x_{361},x_{461}=0$. 
Elements of the form $g=(\diag(1,g_1,I_2),I_2)$ with $g_1\in\spl_3$ 
do not change this condition. We consider the action of $g$ on 
\begin{equation*}
\begin{pmatrix}
0 & x_{232} & x_{242} \\
-x_{232}  & 0 & x_{342} \\
-x_{242} & -x_{342} & 0 
\end{pmatrix}. 
\end{equation*}
Lemma \ref{lem:alernating-matrix} (2) implies that 
we may assume that $x_{232},x_{242}=0$. 

\vskip 5pt

(10) $\be_{11}=\tfrac {1} {210} (-10,-4,2,2,2,8,-3,3)$.  
%

$Z_{\be_{11}}$ is spanned by $\coorde_i=e_{jkl}$ 
for the following $i,jkl$. 

\tiny
\vskip 5pt

\begin{center}

 
\begin{tabular}{|c|c|c|c|c|c|c|c|c|c|c|c|c|c|c|c|}
\hline
$i$ & $9$ & $\underline{10}$ 
& $\underline{11}$ & $13$ & $20$ & $21$
& $22$ & $23$ \\
\hline
$jkl$ & $261$ & $\underline{341}$ & $\underline{351}$ & $451$
& $162$ & $232$ & $242$ 
& $252$ \\
\hline
\end{tabular}

\end{center}
\normalsize

\vskip 5pt

Lemma \ref{lem:alernating-matrix} implies that we may assume that 
$x_{341},x_{351}=0$.  

\vskip 5pt

(11) $\be_{12}=\tfrac {1} {51} (-5,-5,1,1,1,7,0,0)$.  


$Z_{\be_{12}}$ is spanned by $\coorde_i=e_{jkl}$ 
for the following $i,jkl$. 

\tiny
\vskip 5pt

\begin{center}

 
\begin{tabular}{|c|c|c|c|c|c|c|c|c|c|c|c|c|c|c|c|}
\hline
$i$ & $5$ & $9$ & $\underline{10}$ & $11$
& $13$ & $20$ & $24$ & $\underline{25}$ & $26$ & $28$ \\
\hline
$jkl$ & $161$ & $261$ & $\underline{341}$ & $351$ & $451$
& $162$ & $262$ & $\underline{342}$ & $352$ & $452$ \\
\hline
\end{tabular}

\end{center}
\normalsize

\vskip 5pt

We identify $\lan e_{341},e_{351},e_{451},e_{342},e_{352},e_{452}\ran$ 
with $\m_{3,2}$. The situation is not exactly the same with 
that in Lemma \ref{lem:eliminate-nm-matrix},
but almost the same argument works and we may assume that 
$x_{341},x_{342}=0$. 

\vskip 5pt

(12) $\be_{14}=\tfrac {1} {24} (-3,-1,-1,1,1,3,-1,1)$.  

$Z_{\be_{14}}$ is spanned by $\coorde_i=e_{jkl}$ 
for the following $i,jkl$. 

\tiny
\vskip 5pt

\begin{center}

 
\begin{tabular}{|c|c|c|c|c|c|c|c|c|c|c|c|c|c|c|c|}
\hline
$i$ & $\underline{9}$ & $12$ 
& $13$ & $20$ & $\underline{22}$ & $23$ & $25$ & $26$ \\
\hline
$jkl$ & $\underline{261}$ & $361$ & $451$
& $162$ & $\underline{242}$ & $252$ & $342$ & $352$ \\
\hline
\end{tabular}

\end{center}
\normalsize

\vskip 5pt

Lemma \ref{lem:eliminate-2m(2)} implies that we may assume that 
$x_{261},x_{242}=0$. 

\vskip 5pt

(13) $\be_{15}=\tfrac {5} {282} (-8,-8,-2,4,4,10,-3,3)$.  

$Z_{\be_{15}}$ is spanned by $\coorde_i=e_{jkl}$ 
for the following $i,jkl$. 

\tiny
\vskip 5pt

\begin{center}

 
\begin{tabular}{|c|c|c|c|c|c|c|c|c|c|c|c|c|c|c|c|}
\hline
$i$ & $12$ & $13$ & $\underline{20}$ & $24$ & $\underline{25}$ & $26$ \\
\hline
$jkl$ & $361$ & $451$ & $\underline{162}$ 
& $262$ & $\underline{342}$ & $352$ \\
\hline
\end{tabular}

\end{center}
\normalsize

\vskip 5pt

We apply Lemma \ref{lem:eliminate-standard} 
to $\lan e_{162},e_{262}\ran$, $\lan e_{342},e_{352}\ran$ 
and may assume that $x_{162},x_{342}=0$. 

\vskip 5pt

(14) $\be_{16}=\tfrac {1} {264} (-7,-7,-1,-1,5,11,-3,3)$.  

$Z_{\be_{16}}$ is spanned by $\coorde_i=e_{jkl}$ 
for the following $i,jkl$. 

\tiny
\vskip 5pt

\begin{center}

 
\begin{tabular}{|c|c|c|c|c|c|c|c|c|c|c|c|c|c|c|c|}
\hline
$i$ & $\underline{5}$ & $9$ & $\underline{11}$ 
& $13$ & $19$ & $23$ & $25$ \\
\hline
$jkl$ & $\underline{161}$ & $261$ & $\underline{351}$ 
& $451$ & $152$ & $252$ & $342$ \\
\hline
\end{tabular}

\end{center}
\normalsize

\vskip 5pt

We apply Lemma \ref{lem:eliminate-standard} 
to $\lan e_{161},e_{261}\ran$, $\lan e_{351},e_{451}\ran$, 
and may assume that $x_{161},x_{351}=0$.

(15) $\be_{17}=\tfrac {1} {42} (-4,-2,0,0,2,4,-1,1)$. 

$Z_{\be_{17}}$ is spanned by $\coorde_i=e_{jkl}$ 
for the following $i,jkl$. 

\tiny
\vskip 5pt

\begin{center}

 
\begin{tabular}{|c|c|c|c|c|c|c|c|c|c|c|c|c|c|c|c|}
\hline
$i$ & $9$ & $\underline{11}$ & $13$ & $20$ & $23$ & $25$ \\
\hline
$jkl$ & $261$ & $\underline{351}$ 
& $451$ & $162$ & $252$ & $342$ \\
\hline
\end{tabular}

\end{center}
\normalsize

\vskip 5pt

Lemma \ref{lem:eliminate-standard} implies that
we may assume that $x_{351}=0$. 

\vskip 5pt

(16) $\be_{19}=\tfrac {1} {30} (-4,-4,-4,-4,-4,20,-15,15)$ 

$Z_{\be_{19}}$ is spanned by $\coorde_i=e_{jkl}$ 
for the following $i,jkl$. 
\tiny
%
%
%
\begin{tabular}{|c|c|c|c|c|c|c|c|c|c|c|c|c|c|c|c|}
\hline
$i$ & $\underline{20}$ & $\underline{24}$ 
& $\underline{27}$ & $\underline{29}$ & $30$ \\
\hline
$jkl$ & $\underline{162}$ & $\underline{262}$ 
& $\underline{362}$ & $\underline{462}$ & $562$ \\
\hline
\end{tabular}

%
\normalsize

\vskip 5pt

Lemma \ref{lem:eliminate-standard} implies that 
we may assume that $x_{162},x_{262},x_{362},x_{462}=0$. 

\vskip 5pt

(17) $\be_{20}=\tfrac {1} {78} (-14,-14,-14,-14,4,52,-9,9)$.  

$Z_{\be_{20}}$ is spanned by $\coorde_i=e_{jkl}$ 
for the following $i,jkl$. 
\tiny
%
%
%
\begin{tabular}{|c|c|c|c|c|c|c|c|c|c|c|c|c|c|c|c|}
\hline
$i$ & $15$ & $\underline{20}$ 
& $\underline{24}$ & $\underline{27}$ & $29$ \\
\hline
$jkl$ & $561$ & $\underline{162}$ 
& $\underline{262}$ & $\underline{362}$ & $462$ \\
\hline
\end{tabular}

%
\normalsize

\vskip 5pt

Lemma \ref{lem:eliminate-standard} implies that 
we may assume that $x_{162},x_{262},x_{362}=0$. 

\vskip 5pt

(18) $\be_{21}=\tfrac {1} {102} (-34,-4,-4,14,14,14,-9,9)$.  

$Z_{\be_{21}}$ is spanned by $\coorde_i=e_{jkl}$ 
for the following $i,jkl$. 

\tiny
\vskip 5pt

\begin{center}

 
\begin{tabular}{|c|c|c|c|c|c|c|c|c|c|c|c|c|c|c|c|}
\hline
$i$ & $\underline{13}$ & $\underline{14}$ & $15$ 
& $\underline{22}$ & $23$ & $24$ & $25$ & $26$ & $27$ \\
\hline
$jkl$ & $\underline{451}$ & $\underline{461}$ 
& $561$ & $\underline{242}$ & $252$ & $262$ 
& $342$ & $352$ & $362$ \\
\hline
\end{tabular}

\end{center}
\normalsize

\vskip 5pt

We apply Lemma \ref{lem:alernating-matrix} 
(resp. Lemma \ref{lem:eliminate-standard})
to $\lan e_{451},e_{461},e_{561}\ran$ 
(resp. $\lan e_{242},e_{342}\ran$)
and may assume that $x_{451},x_{461},x_{242}=0$.  

\vskip 5pt

(19) $\be_{22}=\tfrac {1} {30} (-4,-4,-4,2,5,5,-3,3)$.  


$Z_{\be_{22}}$ is spanned by $\coorde_i=e_{jkl}$ 
for the following $i,jkl$. 

\tiny
\vskip 5pt

\begin{center}

 
\begin{tabular}{|c|c|c|c|c|c|c|c|c|c|c|c|c|c|c|c|}
\hline
$i$ & $\underline{13}$ & $14$ & $\underline{19}$ 
& $\underline{20}$ & $23$  & $24$ & $26$ & $27$ \\
\hline
$jkl$ & $\underline{451}$ & $461$ & $\underline{152}$ 
& $\underline{162}$ & $252$ 
& $262$ & $352$ & $362$ \\
\hline
\end{tabular}

\end{center}
\normalsize

\vskip 5pt

Lemma \ref{lem:eliminate-2m(32)} implies that 
we may assume that $x_{451},x_{152},x_{162}=0$.

\vskip 5pt

(20) $\be_{23}=\tfrac {1} {42} (-8,-8,-8,-2,10,16,-9,9)$.  

$Z_{\be_{23}}$ is spanned by $\coorde_i=e_{jkl}$ 
for the following $i,jkl$. 
\tiny
%
%
%
\begin{tabular}{|c|c|c|c|c|c|c|c|c|c|c|c|c|c|c|c|}
\hline
$i$ & $15$ & $\underline{20}$ & $\underline{24}$ 
& $27$ & $28$ \\
\hline
$jkl$ & $561$ & $\underline{162}$ 
& $\underline{262}$ & $362$ & $452$ \\
\hline
\end{tabular}
%
%
\normalsize

\vskip 5pt

Lemma \ref{lem:eliminate-standard} implies that we may assume that 
$x_{162},x_{262}=0$. 

\vskip 5pt

(21) $\be_{24}=\tfrac {1} {15} (-2,-2,-2,1,1,4,0,0)$.  

$Z_{\be_{24}}$ is spanned by $\coorde_i=e_{jkl}$ 
for the following $i,jkl$. 

\tiny
\vskip 5pt

\begin{center}

 
\begin{tabular}{|c|c|c|c|c|c|c|c|c|c|c|c|c|c|c|c|}
\hline
$i$ & $\underline{5}$ & $9$ & $12$ 
& $\underline{13}$ & $\underline{20}$ & $24$ & $27$ & $28$ \\
\hline
$jkl$ & $\underline{161}$ & $261$ 
& $361$ & $\underline{451}$ & $\underline{162}$ & $262$ 
& $362$ & $452$ \\
\hline
\end{tabular}

\end{center}
\normalsize

\vskip 5pt

Lemma \ref{lem:eliminate-2m(32)} implies that we may assume that 
$x_{161},x_{162},x_{451}=0$. 

\vskip 5pt

(22) $\be_{25}=\tfrac {1} {102} (-16,-16,-16,-10,-10,68,-3,3)$.  

$Z_{\be_{25}}$ is spanned by $\coorde_i=e_{jkl}$ 
for the following $i,jkl$. 
\tiny
%
%
%
\begin{tabular}{|c|c|c|c|c|c|c|c|c|c|c|c|c|c|c|c|}
\hline
$i$ & $\underline{14}$ & $15$ & $\underline{20}$ 
& $\underline{24}$ & $27$ \\
\hline
$jkl$ & $\underline{461}$ & $561$ 
& $\underline{162}$ & $\underline{262}$ & $362$ \\
\hline
\end{tabular}

%
\normalsize

\vskip 5pt

We apply Lemma \ref{lem:eliminate-standard} to 
$\lan e_{461},e_{561}\ran$, $\lan e_{162},e_{262},e_{362}\ran$,  
and may assume that $x_{461},x_{162},x_{262}=0$. 

\vskip 5pt

(23) $\be_{26}=\tfrac {1} {42} (-2,-2,0,0,0,4,-3,3)$. 
%

$Z_{\be_{26}}$ is spanned by $\coorde_i=e_{jkl}$ 
for the following $i,jkl$. 

\tiny
\vskip 5pt

\begin{center}

 
\begin{tabular}{|c|c|c|c|c|c|c|c|c|c|c|c|c|c|c|c|}
\hline
$i$ & $\underline{12}$ & $\underline{14}$ & $15$ & $\underline{17}$ & $18$ 
& $19$ & $21$ & $22$ & $23$ \\
\hline
$jkl$ & $\underline{361}$ & $\underline{461}$ & $561$ & $\underline{132}$ 
& $142$ & $152$ & $232$ & $242$ & $252$ \\
\hline
\end{tabular}

\end{center}
\normalsize

\vskip 5pt

Lemma \ref{lem:eliminate-2m(3-32)} implies that we may assume that 
$x_{361},x_{461},x_{132}=0$. 

\vskip 5pt

(24) $\be_{27}=\tfrac {1} {102} (-10,-10,2,2,2,14,-51,51)$.  

$Z_{\be_{27}}$ is spanned by $\coorde_i=e_{jkl}$ 
for the following $i,jkl$. 
\tiny
%
%
%
\begin{tabular}{|c|c|c|c|c|c|c|c|c|c|c|c|c|c|c|c|}
\hline
$i$ & $\underline{20}$ & $24$ & $\underline{25}$ 
& $\underline{26}$ & $28$ \\
\hline
$jkl$ & $\underline{162}$ & $262$ 
& $\underline{342}$ & $\underline{352}$ & $452$ \\
\hline
\end{tabular}

%
\normalsize

\vskip 5pt

We apply Lemma \ref{lem:eliminate-standard}
(resp. Lemma \ref{lem:alernating-matrix}) to 
$\lan e_{162},e_{262}\ran$ 
(resp. $\lan e_{342},e_{352},e_{452}\ran$) 
and may assume that $x_{162},x_{342},x_{352}=0$. 

\vskip 5pt

(25) $\be_{28}=\tfrac {1} {222} (-44,-2,-2,-2,10,40,-27,27)$.  

$Z_{\be_{23}}$ is spanned by $\coorde_i=e_{jkl}$ 
for the following $i,jkl$. 
\tiny
%
%
%
\begin{tabular}{|c|c|c|c|c|c|c|c|c|c|c|c|c|c|c|c|}
\hline
$i$ & $15$ & $20$ & $\underline{21}$ 
& $\underline{22}$ & $25$ \\
\hline
$jkl$ & $561$ & $162$ 
& $\underline{232}$ & $\underline{242}$ & $342$ \\
\hline
\end{tabular}

%
\normalsize

\vskip 5pt

Lemma \ref{lem:alernating-matrix} implies that 
we may assume that $x_{232},x_{242}=0$.  

\vskip 5pt

(26) $\be_{29}=\tfrac {1} {78} (-26,4,4,4,4,10,-3,3)$.  
%

$Z_{\be_{29}}$ is spanned by $\coorde_i=e_{jkl}$ 
for the following $i,jkl$. 

\tiny
\vskip 5pt

\begin{center}

 
\begin{tabular}{|c|c|c|c|c|c|c|c|c|c|c|c|c|c|c|c|}
\hline
$i$ & $\underline{9}$ & $\underline{12}$ & $\underline{14}$ 
& $15$ & $\underline{21}$ 
& $\underline{22}$ & $23$ & $25$ & $26$ & $28$ \\
\hline
$jkl$ & $\underline{261}$ & $\underline{361}$ & $\underline{461}$ & $561$
& $\underline{232}$ 
& $\underline{242}$ & $252$ & $342$ & $352$ & $452$ \\
\hline
\end{tabular}

\end{center}
\normalsize

\vskip 5pt

Lemma \ref{lem:eliminate-standard} implies that 
we may assume that $x_{261},x_{361},x_{461}=0$. 
Matrices of the form $\diag(1,g_1,I_2)$ with 
$g_1\in \gl_3$ do not change this condition. 
Then Lemma \ref{lem:alernating-matrix} implies that
we may assume that $x_{232},x_{242}=0$. 

\vskip 5pt

(27) $\be_{30}=\tfrac {1} {48} (-4,-1,-1,2,2,2,-3,3)$.  

$Z_{\be_{30}}$ is spanned by $\coorde_i=e_{jkl}$ 
for the following $i,jkl$. 

\tiny
\vskip 5pt

\begin{center}

 
\begin{tabular}{|c|c|c|c|c|c|c|c|c|c|c|c|c|c|c|c|}
\hline
$i$ & $\underline{13}$ & $\underline{14}$ & $15$ 
& $18$ & $19$ & $20$ & $21$ \\
\hline
$jkl$ & $\underline{451}$ & $\underline{461}$ 
& $561$ & $142$ & $152$ & $162$ & $232$ \\
\hline
\end{tabular}

\end{center}
\normalsize

\vskip 5pt

Lemma \ref{lem:alernating-matrix} implies that 
we may assume that $x_{451},x_{461}=0$.

\vskip 5pt

(28) $\be_{31}=\tfrac {1} {174} (-16,-16,2,2,2,26,-3,3)$. 

$Z_{\be_{31}}$ is spanned by $\coorde_i=e_{jkl}$ 
for the following $i,jkl$. 
\tiny
%
%
%
\begin{tabular}{|c|c|c|c|c|c|c|c|c|c|c|c|c|c|c|c|}
\hline
$i$ & $\underline{5}$ & $9$ & $\underline{25}$ 
& $\underline{26}$ & $28$ \\
\hline
$jkl$ & $\underline{161}$ & $261$ 
& $\underline{342}$ & $\underline{352}$ & $452$ \\
\hline
\end{tabular}

%
\normalsize

\vskip 5pt

We apply Lemma \ref{lem:eliminate-standard} 
(resp. Lemma \ref{lem:alernating-matrix}) 
to $\lan e_{161},e_{261}\ran$, $\lan e_{342},e_{352},e_{452}\ran$
and may assume that $x_{161},x_{342},x_{352}=0$.  

\vskip 5pt

(29) $\be_{32}=\tfrac {1} {30} (-10,2,2,2,2,2,-15,15)$.  

$Z_{\be_{32}}$ is spanned by $\coorde_i=e_{jkl}$ 
for the following $i,jkl$. 

\tiny
\vskip 5pt

\begin{center}

 
\begin{tabular}{|c|c|c|c|c|c|c|c|c|c|c|c|c|c|c|c|}
\hline
$i$ & $\underline{21}$ & $\underline{22}$ & $\underline{23}$ 
& $\underline{24}$ & $25$ & $26$ & $27$ & $28$ & $29$ & $30$ \\
\hline
$jkl$ & $\underline{232}$ & $\underline{242}$ 
& $\underline{252}$ & $\underline{262}$ & $342$ 
& $352$ & $362$ & $452$ & $462$ & $562$ \\
\hline
\end{tabular}

\end{center}
\normalsize

\vskip 5pt

Lemma \ref{lem:alernating-matrix} implies that we may assume that 
$x_{232},x_{242},x_{252},x_{262}=0$. 

\vskip 5pt

(30) $\be_{33}=\tfrac {1} {102} (-34,-10,-10,-10,32,32,-21,21)$. 

$Z_{\be_{33}}$ is spanned by $\coorde_i=e_{jkl}$ 
for the following $i,jkl$. 

\tiny
\vskip 5pt

\begin{center}

 
\begin{tabular}{|c|c|c|c|c|c|c|c|c|c|c|c|c|c|c|c|}
\hline
$i$ & $15$ & $\underline{23}$ & $\underline{24}$ 
& $26$ & $27$ & $28$ & $29$ \\
\hline
$jkl$ & $561$ & $\underline{252}$ 
& $\underline{262}$ & $352$ & $362$ & $452$ & $462$ \\
\hline
\end{tabular}

\end{center}
\normalsize

\vskip 5pt

Lemma \ref{lem:eliminate-nm-matrix} implies that 
we may assume that $x_{252},x_{262}=0$. 

\vskip 5pt
 
(31) $\be_{34}=\tfrac {1} {22} (-4,-2,-2,2,2,4,-3,3)$.  

$Z_{\be_{34}}$ is spanned by $\coorde_i=e_{jkl}$ 
for the following $i,jkl$. 

\tiny
\vskip 5pt

\begin{center}

 
\begin{tabular}{|c|c|c|c|c|c|c|c|c|c|c|c|c|c|c|c|}
\hline
$i$ & $\underline{14}$ & $15$ & $20$ 
& $\underline{22}$ & $23$ & $25$ & $26$ \\
\hline
$jkl$ & $\underline{461}$ & $561$ 
& $162$ & $\underline{242}$ & $252$ & $342$ & $352$ \\
\hline
\end{tabular}

\end{center}
\normalsize

\vskip 5pt

Lemma \ref{lem:eliminate-2m(2)} implies that we may assume that 
$x_{461}=x_{242}=0$. 

\vskip 5pt

(32) $\be_{36}=\tfrac {1} {66} (-22,2,4,4,6,6,-1,1)$.  

$Z_{\be_{36}}$ is spanned by $\coorde_i=e_{jkl}$ 
for the following $i,jkl$. 

\tiny
\vskip 5pt

\begin{center}

 
\begin{tabular}{|c|c|c|c|c|c|c|c|c|c|c|c|c|c|c|c|}
\hline
$i$ & $\underline{11}$ & $12$ & $13$ 
& $14$ & $\underline{23}$ & $24$ & $25$ \\
\hline
$jkl$ & $\underline{351}$ & $361$ 
& $451$ & $461$ & $\underline{252}$ 
& $262$ & $342$ \\
\hline
\end{tabular}

\end{center}
\normalsize

\vskip 5pt

Lemma \ref{lem:eliminate-2m(2)} implies that we may assume that 
$x_{351},x_{252}=0$. 

\vskip 5pt

(33) $\be_{37}=\tfrac {5} {66} (-2,-2,-2,-2,4,4,-3,3)$.  

$Z_{\be_{37}}$ is spanned by $\coorde_i=e_{jkl}$ 
for the following $i,jkl$. 

\tiny
\vskip 5pt

\begin{center}

 
\begin{tabular}{|c|c|c|c|c|c|c|c|c|c|c|c|c|c|c|c|}
\hline
$i$ & $15$ & $\underline{19}$ & $\underline{20}$ 
& $\underline{23}$ & $\underline{24}$ & $26$ & $27$ & $28$ & $29$ \\
\hline
$jkl$ & $561$ & $\underline{152}$ 
& $\underline{162}$ & $\underline{252}$ 
& $\underline{262}$ & $352$ & $362$ & $452$ & $462$ \\
\hline
\end{tabular}

\end{center}
\normalsize

\vskip 5pt

Lemma \ref{lem:eliminate-nm-matrix} implies that we may assume that
$x_{152},x_{162},x_{252},x_{262}=0$. 

\vskip 5pt

(34)  $\be_{38}=\tfrac {1} {246} (-34,-28,-28,26,32,32,-33,33)$.  

$Z_{\be_{38}}$ is spanned by $\coorde_i=e_{jkl}$ 
for the following $i,jkl$. 
\tiny
%
%
%
\begin{tabular}{|c|c|c|c|c|c|c|c|c|c|c|c|c|c|c|c|}
\hline
$i$ & $15$ & $19$ & $20$ 
& $\underline{22}$ & $25$ \\
\hline
$jkl$ & $561$ & $152$ 
& $162$ & $\underline{242}$ & $342$ \\
\hline
\end{tabular}

%
\normalsize

\vskip 5pt

Lemma \ref{lem:eliminate-standard} implies that
we may assume that $x_{242}=0$.  

\vskip 5pt

(35) $\be_{39}=\tfrac {1} {66} (-22,-10,-10,8,8,26,-9,9)$. 

$Z_{\be_{39}}$ is spanned by $\coorde_i=e_{jkl}$ 
for the following $i,jkl$. 
\tiny
%
%
%
\begin{tabular}{|c|c|c|c|c|c|c|c|c|c|c|c|c|c|c|c|}
\hline
$i$ & $\underline{14}$ & $15$ & $\underline{24}$ 
& $27$ & $28$ \\
\hline
$jkl$ & $\underline{461}$ & $561$ 
& $\underline{262}$ & $362$ & $452$ \\
\hline
\end{tabular}

%
\normalsize

\vskip 5pt

We apply Lemma \ref{lem:eliminate-standard} to 
$\lan e_{461},e_{561}\ran$, $\lan e_{262},e_{362}\ran$
and may assume that $x_{461},x_{262}=0$. 

\vskip 5pt

(36) $\be_{40}=\tfrac {1} {114} (-38,-2,4,10,10,16,-3,3)$.  

$Z_{\be_{40}}$ is spanned by $\coorde_i=e_{jkl}$ 
for the following $i,jkl$. 
\tiny
%
%
%
\begin{tabular}{|c|c|c|c|c|c|c|c|c|c|c|c|c|c|c|c|}
\hline
$i$ & $12$ & $13$ & $24$ 
& $\underline{25}$ & $26$ \\
\hline
$jkl$ & $361$ & $451$ 
& $262$ & $\underline{342}$ & $352$ \\
\hline
\end{tabular}

%
\normalsize

\vskip 5pt

Lemma \ref{lem:eliminate-standard} implies that
we may assume that $x_{342}=0$. 

\vskip 5pt

(37) $\be_{41}=\tfrac {1} {30} (-4,-4,-2,0,4,6,-1,1)$. 

$Z_{\be_{41}}$ is spanned by $\coorde_i=e_{jkl}$ 
for the following $i,jkl$. 
\tiny
%
%
%
\begin{tabular}{|c|c|c|c|c|c|c|c|c|c|c|c|c|c|c|c|}
\hline
$i$ & $12$ & $13$ & $\underline{20}$ 
& $24$ & $26$ \\
\hline
$jkl$ & $361$ & $451$ 
& $\underline{162}$ & $262$ & $352$ \\
\hline
\end{tabular}

%
\normalsize

\vskip 5pt

Lemma \ref{lem:eliminate-standard} 
implies that we may assume that $x_{162}=0$. 

\vskip 5pt

(38) $\be_{42}=\tfrac {1} {102} (-7,-1,-1,2,2,5,-3,3)$. 

$Z_{\be_{42}}$ is spanned by $\coorde_i=e_{jkl}$ 
for the following $i,jkl$. 
\tiny
%
%
%
\begin{tabular}{|c|c|c|c|c|c|c|c|c|c|c|c|c|c|c|c|}
\hline
$i$ & $\underline{9}$ & $12$ & $13$ 
& $20$ & $21$ \\
\hline
$jkl$ & $\underline{261}$ & $361$ 
& $451$ & $162$ & $232$ \\
\hline
\end{tabular}

%
\normalsize

\vskip 5pt

Lemma \ref{lem:eliminate-standard} implies that we may assume that 
$x_{261}=0$. 

\vskip 5pt

(39) $\be_{43}=\tfrac {1} {114} (-14,-8,-2,4,4,16,-3,3)$. 

$Z_{\be_{43}}$ is spanned by $\coorde_i=e_{jkl}$ 
for the following $i,jkl$. 
\tiny
%
%
%
\begin{tabular}{|c|c|c|c|c|c|c|c|c|c|c|c|c|c|c|c|}
\hline
$i$ & $9$ & $13$ & $20$ 
& $\underline{25}$ & $26$ \\
\hline
$jkl$ & $261$ & $451$ 
& $162$ & $\underline{342}$ & $352$ \\
\hline
\end{tabular}

%
\normalsize

\vskip 5pt

Lemma \ref{lem:eliminate-standard} implies that 
 we may assume that $x_{342}=0$. 

\vskip 5pt

(40) $\be_{44}=\tfrac {1} {42} (-8,-2,-2,-2,4,10,-3,3)$.  

$Z_{\be_{44}}$ is spanned by $\coorde_i=e_{jkl}$ 
for the following $i,jkl$. 

\tiny
\vskip 5pt

\begin{center}

 
\begin{tabular}{|c|c|c|c|c|c|c|c|c|c|c|c|c|c|c|c|}
\hline
$i$ & $\underline{9}$ & $\underline{12}$ & $14$ 
& $20$ & $23$ & $26$ & $28$ \\
\hline
$jkl$ & $\underline{261}$ & $\underline{361}$ 
& $461$ & $162$ & $252$ & $352$ & $452$ \\
\hline
\end{tabular}

\end{center}
\normalsize


We apply Lemma \ref{lem:eliminate-standard} to 
$\lan e_{261},e_{361},e_{461}\ran$ and  
may assume that $x_{261},x_{361}=0$. 

\vskip 5pt

(41) $\be_{45}=\tfrac {1} {114} (-6,-2,-2,0,4,6,-3,3)$.  

$Z_{\be_{45}}$ is spanned by $\coorde_i=e_{jkl}$ 
for the following $i,jkl$. 
\tiny
%
%
%
\begin{tabular}{|c|c|c|c|c|c|c|c|c|c|c|c|c|c|c|c|}
\hline
$i$ & $\underline{9}$ & $12$ & $13$ 
& $19$ & $22$ & $25$ \\
\hline
$jkl$ & $\underline{261}$ & $361$ 
& $451$ & $152$ & $242$ & $342$ \\
\hline
\end{tabular}

%
\normalsize

\vskip 5pt

We apply Lemma \ref{lem:eliminate-standard}
to $\lan e_{261},e_{361}\ran$
and may assume that $x_{261}=$0. 

\vskip 5pt

(42) $\be_{47}=\tfrac {1} {12} (-4,-1,-1,-1,-1,8,-6,6)$.  

$Z_{\be_{47}}$ is spanned by $\coorde_i=e_{jkl}$ 
for the following $i,jkl$. 
\tiny
%
%
%
\begin{tabular}{|c|c|c|c|c|c|c|c|c|c|c|c|c|c|c|c|}
\hline
$i$ & $\underline{24}$ & $\underline{27}$ & $\underline{29}$ 
& $30$ \\
\hline
$jkl$ & $\underline{262}$ & $\underline{362}$ 
& $\underline{462}$ & $562$ \\
\hline
\end{tabular}

%
\normalsize

\vskip 5pt
Lemma \ref{lem:eliminate-standard} 
implies that we may assume that 
$x_{262},x_{362},x_{462}=0$. 

\vskip 5pt
 
(43) $\be_{48}=\tfrac {1} {30} (-4,-4,-4,2,2,8,-15,15)$.  

$Z_{\be_{48}}$ is spanned by $\coorde_i=e_{jkl}$ 
for the following $i,jkl$. 
\tiny
%
%
%
\begin{tabular}{|c|c|c|c|c|c|c|c|c|c|c|c|c|c|c|c|}
\hline
$i$ & $\underline{20}$ & $\underline{24}$ & $27$ 
& $28$ \\
\hline
$jkl$ & $\underline{162}$ & $\underline{262}$ 
& $362$ & $452$ \\
\hline
\end{tabular}

%
\normalsize

\vskip 5pt
Lemma \ref{lem:eliminate-standard} implies that
we may assume that $x_{162},x_{262}=0$.

\vskip 5pt

(44) $\be_{49}=\tfrac {1} {30} (-10,-4,-4,-4,2,20,-3,3)$.  

$Z_{\be_{49}}$ is spanned by $\coorde_i=e_{jkl}$ 
for the following $i,jkl$. 
\tiny
%
%
%
\begin{tabular}{|c|c|c|c|c|c|c|c|c|c|c|c|c|c|c|c|}
\hline
$i$ & $15$ & $\underline{24}$ & $\underline{27}$ 
& $29$ \\
\hline
$jkl$ & $561$ & $\underline{262}$ 
& $\underline{362}$ & $462$ \\
\hline
\end{tabular}

%
\normalsize

\vskip 5pt

Lemma \ref{lem:eliminate-standard} implies that
we may assume that $x_{262},x_{362}=0$.

\vskip 5pt

(45) $\be_{50}=\tfrac {1} {48} (-7,-7,-7,5,5,11,-3,3)$. 

$Z_{\be_{50}}$ is spanned by $\coorde_i=e_{jkl}$ 
for the following $i,jkl$. 
\tiny
%
%
%
\begin{tabular}{|c|c|c|c|c|c|c|c|c|c|c|c|c|c|c|c|}
\hline
$i$ & $13$ & $\underline{20}$ & $\underline{24}$ 
& $27$ \\
\hline
$jkl$ & $451$ & $\underline{162}$ 
& $\underline{262}$ & $362$ \\
\hline
\end{tabular}

%
\normalsize

\vskip 5pt

Lemma \ref{lem:eliminate-standard} implies that 
we may assume that $x_{162},x_{262}=0$.

\vskip 5pt

(46) $\be_{51}=\tfrac {1} {48} (-16,2,2,2,5,5,-3,3)$. 

$Z_{\be_{51}}$ is spanned by $\coorde_i=e_{jkl}$ 
for the following $i,jkl$. 
\tiny
%
%
%
\begin{tabular}{|c|c|c|c|c|c|c|c|c|c|c|c|c|c|c|c|}
\hline
$i$ & $15$ & $\underline{21}$ & $\underline{22}$ 
& $25$ \\
\hline
$jkl$ & $561$ 
& $\underline{232}$ & $\underline{242}$ & $342$ \\
\hline
\end{tabular}

%
\normalsize

\vskip 5pt

Lemma \ref{lem:alernating-matrix} implies that
we may assume that $x_{232},x_{242}=0$. 

\vskip 5pt

(47) $\be_{52}=\tfrac {1} {6} (-2,0,0,0,1,1,-3,3)$.  

$Z_{\be_{52}}$ is spanned by $\coorde_i=e_{jkl}$ 
for the following $i,jkl$. 
\tiny
%
%
%
\begin{tabular}{|c|c|c|c|c|c|c|c|c|c|c|c|c|c|c|c|}
\hline
$i$ & $\underline{23}$ & $\underline{24}$ & $26$ 
& $27$ & $28$ & $29$ \\
\hline
$jkl$ & $\underline{252}$ & $\underline{262}$ 
& $352$ & $362$ & $452$ & $462$ \\
\hline
\end{tabular}

%
\normalsize

\vskip 5pt

Lemma \ref{lem:eliminate-nm-matrix} implies that
we may assume that $x_{252},x_{262}=0$. 

\vskip 5pt

(48) $\be_{53}=\tfrac {1} {42} (-14,-2,-2,4,7,7,-3,3)$. 

$Z_{\be_{53}}$ is spanned by $\coorde_i=e_{jkl}$ 
for the following $i,jkl$. 
\tiny
%
%
%
\begin{tabular}{|c|c|c|c|c|c|c|c|c|c|c|c|c|c|c|c|}
\hline
$i$ & $\underline{13}$ & $14$ & $\underline{23}$ 
& $24$ & $26$ & $27$ \\
\hline
$jkl$ & $\underline{451}$ & $461$ 
& $\underline{252}$ & $262$ & $352$ & $362$ \\
\hline
\end{tabular}

%
\normalsize

\vskip 5pt

Lemma \ref{lem:eliminate-2m(2)} implies that
we may assume that $x_{451},x_{252}=0$. 

\vskip 5pt

(49) $\be_{54}=\tfrac {1} {12} (-1,-1,-1,-1,2,2,-6,6)$.  

$Z_{\be_{54}}$ is spanned by $\coorde_i=e_{jkl}$ 
for the following $i,jkl$. 

\tiny
\vskip 5pt

\begin{center}

 
\begin{tabular}{|c|c|c|c|c|c|c|c|c|c|c|c|c|c|c|c|}
\hline
$i$ & $\underline{19}$ & $\underline{20}$ & $\underline{23}$ 
& $\underline{24}$ & $26$ & $27$ & $28$ & $29$ \\
\hline
$jkl$ & $\underline{152}$ & $\underline{162}$ 
& $\underline{252}$ & $\underline{262}$ & $352$ 
& $362$ & $452$ & $462$ \\
\hline
\end{tabular}

\end{center}
\normalsize

\vskip 5pt

Lemma \ref{lem:eliminate-nm-matrix} implies that
we may assume that $x_{152},x_{162},x_{252},x_{262}=0$. 

\vskip 5pt

(50) $\be_{55}=\tfrac {1} {30} (-10,-4,-4,-1,8,11,-6,6)$. 

$Z_{\be_{55}}$ is spanned by $\coorde_i=e_{jkl}$ 
for the following $i,jkl$. 
\tiny
%
%
%
\begin{tabular}{|c|c|c|c|c|c|c|c|c|c|c|c|c|c|c|c|}
\hline
$i$ & $15$ & $\underline{24}$ 
& $27$ & $28$ \\
\hline
$jkl$ & $561$ 
& $\underline{262}$ & $362$ & $452$ \\
\hline
\end{tabular}

%
\normalsize

\vskip 5pt

Lemma \ref{lem:eliminate-standard} implies that
we may assume that $x_{262}=0$. 

\vskip 5pt

(51) $\be_{56}=\tfrac {1} {24} (-5,-5,1,1,1,7,-3,3)$.  

$Z_{\be_{56}}$ is spanned by $\coorde_i=e_{jkl}$ 
for the following $i,jkl$. 

\tiny
\vskip 5pt

\begin{center}

 
\begin{tabular}{|c|c|c|c|c|c|c|c|c|c|c|c|c|c|c|c|}
\hline
$i$ & $\underline{12}$ & $\underline{14}$ & $15$ 
& $\underline{20}$ & $24$ & $25$ & $26$ & $28$ \\
\hline
$jkl$ & $\underline{361}$ & $\underline{461}$ 
& $561$ & $\underline{162}$ & $262$ 
& $342$ & $352$ & $452$ \\
\hline
\end{tabular}

\end{center}
\normalsize

\vskip 5pt

We apply Lemma \ref{lem:eliminate-standard} to
$\lan e_{361},e_{461},e_{561}\ran$, 
$\lan e_{162},e_{262}\ran$
and may assume that 
$x_{361},x_{461}$, $x_{162}=0$. 

\vskip 5pt

(52) $\be_{57}=\tfrac {1} {24} (-8,-2,1,1,4,4,-3,3)$.  

$Z_{\be_{57}}$ is spanned by $\coorde_i=e_{jkl}$ 
for the following $i,jkl$. 
\tiny
%
%
%
\begin{tabular}{|c|c|c|c|c|c|c|c|c|c|c|c|c|c|c|c|}
\hline
$i$ & $15$ & $\underline{23}$ & $24$ 
& $25$ \\
\hline
$jkl$ & $561$ & $\underline{252}$ 
& $262$ & $342$ \\
\hline
\end{tabular}

%
\normalsize

\vskip 5pt

Lemma \ref{lem:eliminate-standard} implies that
we may assume that $x_{252}=0$. 

\vskip 5pt

(53) $\be_{58}=\tfrac {1} {78} (-14,-8,-8,4,10,16,-9,9)$.  

$Z_{\be_{58}}$ is spanned by $\coorde_i=e_{jkl}$ 
for the following $i,jkl$. 
\tiny
%
%
%
\begin{tabular}{|c|c|c|c|c|c|c|c|c|c|c|c|c|c|c|c|}
\hline
$i$ & $14$ & $20$ 
& $\underline{23}$ & $26$ \\
\hline
$jkl$ & $461$ & $162$ & $\underline{252}$ & $352$ \\
\hline
\end{tabular}

%
\normalsize

\vskip 5pt

Lemma \ref{lem:eliminate-standard} implies that
we may assume that $x_{252}=0$. 

\vskip 5pt

(54) $\be_{59}=\tfrac {1} {3} (-1,-1,0,0,0,2,0,0)$. 

$Z_{\be_{59}}$ is spanned by $\coorde_i=e_{jkl}$ 
for the following $i,jkl$. 
\tiny
%
%
%
\begin{tabular}{|c|c|c|c|c|c|c|c|c|c|c|c|c|c|c|c|}
\hline
$i$ & $\underline{12}$ & $14$ & $15$ 
& $\underline{27}$ & $29$ & $30$ \\
\hline
$jkl$ & $\underline{361}$ & $461$ 
& $561$ & $\underline{362}$ & $462$ 
& $562$ \\
\hline
\end{tabular}

%
\normalsize

\vskip 5pt

Lemma \ref{lem:eliminate-nm-matrix} implies that
we may assume that $x_{361},x_{362}=0$. 

\vskip 5pt
 
(55) $\be_{60}=\tfrac {1} {3} (-1,-1,-1,1,1,1,0,0)$.  

$Z_{\be_{60}}$ is spanned by $\coorde_i=e_{jkl}$ 
for the following $i,jkl$. 
\tiny
%
%
%
\begin{tabular}{|c|c|c|c|c|c|c|c|c|c|c|c|c|c|c|c|}
\hline
$i$ & $\underline{13}$ & $14$ & $15$ 
& $\underline{28}$ & $29$ & $30$ \\
\hline
$jkl$ & $\underline{451}$ & $461$ 
& $561$ & $\underline{452}$ & $462$ 
& $562$ \\ 
\hline
\end{tabular}

%
\normalsize

\vskip 5pt

Similarly as in the case (11),  we may assume that 
$x_{451},x_{452}=0$. 

\vskip 5pt 

(56) $\be_{61}=\tfrac {1} {21} (-7,-1,-1,2,2,5,0,0)$. 

$Z_{\be_{61}}$ is spanned by $\coorde_i=e_{jkl}$ 
for the following $i,jkl$. 
\tiny
%
%
%
\begin{tabular}{|c|c|c|c|c|c|c|c|c|c|c|c|c|c|c|c|}
\hline
$i$ & $\underline{9}$ & $12$ & $\underline{13}$ 
& $24$ & $27$ & $28$ \\
\hline
$jkl$ & $\underline{261}$ & $361$ 
& $\underline{451}$ & $262$ 
& $362$ & $452$ \\
\hline
\end{tabular}

%
\normalsize

\vskip 5pt

Lemma \ref{lem:eliminate-2m(2)} implies that
we may assume that $x_{261},x_{451}=0$. 

\vskip 5pt

(57) $\be_{62}=\tfrac {1} {12} (-4,-1,-1,-1,-1,8,0,0)$. 

$Z_{\be_{62}}$ is spanned by $\coorde_i=e_{jkl}$ 
for the following $i,jkl$. 

\tiny
\vskip 5pt

\begin{center}

 
\begin{tabular}{|c|c|c|c|c|c|c|c|c|c|c|c|c|c|c|c|}
\hline
$i$ & $\underline{9}$ & $\underline{12}$ & $14$ 
& $15$ & $\underline{24}$ & $\underline{27}$ & $29$ & $30$ \\
\hline
$jkl$ & $\underline{261}$ & $\underline{361}$ 
& $461$ & $561$ & $\underline{262}$ & $\underline{362}$ 
& $462$ & $562$ \\ 
\hline
\end{tabular}

\end{center}
\normalsize

\vskip 5pt

Lemma \ref{lem:eliminate-nm-matrix} implies that
we may assume that $x_{261},x_{361},x_{262},x_{362}=0$. 

\vskip 5pt

(58) $\be_{63}=\tfrac {1} {30} (-10,-10,-1,5,5,11,-3,3)$. 

$Z_{\be_{63}}$ is spanned by $\coorde_i=e_{jkl}$ 
for the following $i,jkl$. 
\tiny
%
%
%
\begin{tabular}{|c|c|c|c|c|c|c|c|c|c|c|c|c|c|c|c|}
\hline
$i$ & $\underline{14}$ & $15$ & $27$ & $28$ \\ 
\hline
$jkl$ & $\underline{461}$ & $561$ 
& $362$ & $452$ \\ 
\hline
\end{tabular}

%
\normalsize

\vskip 5pt

Lemma \ref{lem:eliminate-standard} implies that 
we may assume that $x_{461}=0$. 

\vskip 5pt
 
(59) $\be_{64}=\tfrac {1} {78} (-14,-8,-8,4,4,22,-3,3)$. 

$Z_{\be_{64}}$ is spanned by $\coorde_i=e_{jkl}$ 
for the following $i,jkl$. 
\tiny
%
%
%
\begin{tabular}{|c|c|c|c|c|c|c|c|c|c|c|c|c|c|c|c|}
\hline
$i$ & $\underline{9}$ & $12$ & $20$ & $28$ \\
\hline
$jkl$ & $\underline{261}$ & $361$ & $162$ & $452$ \\ 
\hline
\end{tabular}

%
\normalsize

\vskip 5pt

Lemma \ref{lem:eliminate-standard} implies that 
we may assume that $x_{261}=0$. 

\vskip 5pt

(60) $\be_{65}=\tfrac {1} {78} (-5,-2,-2,1,1,7,-6,6)$.  

$Z_{\be_{65}}$ is spanned by $\coorde_i=e_{jkl}$ 
for the following $i,jkl$. 
\tiny
%
%
%
\begin{tabular}{|c|c|c|c|c|c|c|c|c|c|c|c|c|c|c|c|}
\hline
$i$ & $\underline{14}$ & $15$ & $18$ 
& $19$ & $21$ \\
\hline
$jkl$ & $\underline{461}$ & $561$ 
& $142$ & $152$ & $232$ \\ 
\hline
\end{tabular}

%
\normalsize

\vskip 5pt

We apply Lemma \ref{lem:eliminate-standard}
to $\lan e_{461},e_{561}\ran$ and may assume that 
$x_{461}=0$. 

\vskip 5pt

(61) $\be_{68}=\tfrac {1} {6} (-2,-2,0,0,0,4,-3,3)$.  

$Z_{\be_{68}}$ is spanned by $\coorde_i=e_{jkl}$ 
for the following $i,jkl$. 
\tiny
%
%
%
\begin{tabular}{|c|c|c|c|c|c|c|c|c|c|c|c|c|c|c|c|}
\hline
$i$ & $\underline{27}$ & $\underline{29}$ & $30$ \\
\hline
$jkl$ & $\underline{362}$ & $\underline{462}$ & $562$ \\ 
\hline
\end{tabular}

%
\normalsize

\vskip 5pt

Lemma \ref{lem:eliminate-standard} implies that 
we may assume that $x_{362},x_{462}=0$. 

\vskip 5pt

(62) $\be_{69}=\tfrac {1} {6} (-2,-2,-2,2,2,2,-3,3)$. 

$Z_{\be_{69}}$ is spanned by $\coorde_i=e_{jkl}$ 
for the following $i,jkl$. 
\tiny
%
%
%
\begin{tabular}{|c|c|c|c|c|c|c|c|c|c|c|c|c|c|c|c|}
\hline
$i$ & $\underline{28}$ & $\underline{29}$ & $30$ \\
\hline
$jkl$ & $\underline{452}$ & $\underline{462}$ & $562$ \\ 
\hline
\end{tabular}

%
\normalsize

\vskip 5pt

Lemma \ref{lem:alernating-matrix} implies that 
we may assume that $x_{452},x_{462}=0$.

\vskip 5pt

(63) $\be_{70}=\tfrac {1} {42} (-14,-2,-2,4,4,10,-21,21)$. 

$Z_{\be_{70}}$ is spanned by $\coorde_i=e_{jkl}$ 
for the following $i,jkl$. 
\tiny
%
%
%
\begin{tabular}{|c|c|c|c|c|c|c|c|c|c|c|c|c|c|c|c|}
\hline
$i$ & $\underline{24}$ & $27$ & $28$ \\
\hline
$jkl$ & $\underline{262}$ & $362$ & $452$ \\
\hline
\end{tabular}

%
\normalsize

\vskip 5pt

Lemma \ref{lem:eliminate-standard} implies that 
we may assume that $x_{262}=0$. 

\vskip 5pt
 
(64) $\be_{71}=\tfrac {1} {42} (-14,-14,-2,-2,4,28,-3,3)$.  

$Z_{\be_{71}}$ is spanned by $\coorde_i=e_{jkl}$ 
for the following $i,jkl$. 
\tiny
%
%
%
\begin{tabular}{|c|c|c|c|c|c|c|c|c|c|c|c|c|c|c|c|}
\hline
$i$ & $15$ & $\underline{27}$ & $29$ \\
\hline
$jkl$ & $561$ & $\underline{362}$ & $462$ \\ 
\hline
\end{tabular}

%
\normalsize

\vskip 5pt

Lemma \ref{lem:eliminate-standard} implies that 
we may assume that $x_{362}=0$. 

\vskip 5pt
 
(65) $\be_{72}=\tfrac {1} {42} (-14,-14,-14,10,16,16,-3,3)$.  

$Z_{\be_{72}}$ is spanned by $\coorde_i=e_{jkl}$ 
for the following $i,jkl$. 
\tiny
%
%
%
\begin{tabular}{|c|c|c|c|c|c|c|c|c|c|c|c|c|c|c|c|}
\hline
$i$ & $15$ & $\underline{28}$ & $29$ \\
\hline
$jkl$ & $561$ & $\underline{452}$ & $462$ \\ 
\hline
\end{tabular}

%
\normalsize

\vskip 5pt

Lemma \ref{lem:eliminate-standard} implies that 
we may assume that $x_{452}=0$. 

\vskip 5pt
 
(66) $\be_{73}=\tfrac {1} {66} (-22,-4,-4,8,8,14,-3,3)$.  

$Z_{\be_{73}}$ is spanned by $\coorde_i=e_{jkl}$ 
for the following $i,jkl$. 
\tiny
%
%
%
\begin{tabular}{|c|c|c|c|c|c|c|c|c|c|c|c|c|c|c|c|}
\hline
$i$ & $13$ & $\underline{24}$ & $27$ \\
\hline
$jkl$ & $451$ & $\underline{262}$ & $362$ \\ 
\hline
\end{tabular}

%
\normalsize

\vskip 5pt

Lemma \ref{lem:eliminate-standard} implies that 
we may assume that $x_{262}=0$. 

\vskip 5pt

(67) $\be_{77}=\tfrac {1} {6} (-2,-2,-2,1,1,4,-3,3)$. 

$Z_{\be_{77}}$ is spanned by $\coorde_i=e_{jkl}$ 
for the following $i,jkl$. 
\tiny
%
%
%
\begin{tabular}{|c|c|c|c|c|c|c|c|c|c|c|c|c|c|c|c|}
\hline
$i$ & $\underline{29}$ & $30$ \\
\hline
$jkl$ & $\underline{462}$ & $562$ \\
\hline
\end{tabular}

%
\normalsize

\vskip 5pt

Lemma \ref{lem:eliminate-standard} implies that 
we may assume that $x_{462}=0$. 

\vskip 5pt

(68) $\be_{79}=\tfrac {1} {3} (-1,-1,-1,-1,2,2,0,0)$. 

$Z_{\be_{79}}$ is spanned by $\coorde_i=e_{jkl}$ 
for the following $i,jkl$. 
\tiny
%
%
%
\begin{tabular}{|c|c|c|c|c|c|c|c|c|c|c|c|c|c|c|c|}
\hline
$i$ & $\underline{15}$ & $30$ \\
\hline
$jkl$ & $\underline{561}$ & $562$ \\ 
\hline
\end{tabular}

%
\normalsize

\vskip 5pt

Lemma \ref{lem:eliminate-standard} implies that 
we may assume that $x_{561}=0$.

\bibliographystyle{plain} 
\bibliography{ref4} 

\end{document}